\newtheorem{thm}{Theorem}[section]
\newtheorem{cor}[thm]{Corollary}
\newtheorem{lem}[thm]{Lemma}
\newtheorem{prop}[thm]{Proposition}
\theoremstyle{definition}
\newtheorem{defn}[thm]{Definition}
\theoremstyle{theorem}
\newtheorem{rem}[thm]{Remark}
\theoremstyle{claim}
\numberwithin{equation}{section}
\def\R{\mathbb{R}^{n}(c)}
\def\H{\mathbb{H}^{n}(-1)}
\def\S{\mathbb{S}^{n}}
\def\et{\tilde{e}}
\def\th{\tilde{h}}
\def\tH{\tilde{H}}
\def\tg{\tilde{g}}
\def\tom{\tilde{\omega}}
\def\tt{\tilde{\theta}}
\def\tn{\widetilde{\nabla}}
\def\tR{\tilde{R}}
\def\la{\lambda}
\def\l{\langle}
\def\r{\rangle}
\DeclareMathOperator\tr{tr}
\DeclareMathOperator\arsinh{arsinh}
\DeclareMathOperator\dive{div}
\DeclareMathOperator\genus{genus}
\def\confs{\mathrm{Conf}(\mathbb{S}^n)}
\def\confb{\mathrm{Conf}(B^n)}
\begin{document}

\title[Conformal and extrinsic upper bounds for eigenvalues]{Conformal and extrinsic upper bounds for the harmonic mean of Neumann and Steklov eigenvalues}

\author[H. Chen]{Hang Chen}
\address{School of Mathematics and Statistics, Northwestern Polytechnical University, 1 Dongxiang Road, Xi'an, 710129, P.R.China}
\email{\href{mailto:chenhang86@nwpu.edu.cn}{chenhang86@nwpu.edu.cn}}

\begin{abstract}
Let $M$ be an $m$-dimensional compact Riemannian manifold with boundary.
We obtain the upper bounds of the harmonic mean of the first $m$ nonzero Neumann eigenvalues and Steklov eigenvalues involving the conformal volume and relative conformal volume, respectively.
We also give an optimal sharp extrinsic upper bound for closed submanifolds in space forms.
These extend the previous related results for the first nonzero eigenvalues.
\end{abstract}

\keywords {Neumann eigenvalues, Steklov eigenvalues, conformal volume, Reilly inequality}

\subjclass[2020]{58C40, 53C42, 35P15.}

\maketitle

\section{Introduction}
The study of eigenvalues on manifolds is an important topic in Riemannian geometry, and there have been lots of results, including various estimates bounded from below or above in different situations.
In this paper, we study the upper bounds of the harmonic mean of the lower Neumann and Steklov eigenvalues.

\subsection{Conformal upper bounds for Neumann eigenvalues}
Let $(M,g)$ be an $m$-dimensional compact Riemannian manifold (possibly with boundary).
We always assume $m\ge 2$ throughout this article except special declaration.
It is well known that the eigenvalues of Laplacian on $M$ (with Neumann boundary condition if $\partial M\neq \emptyset$) are discrete and satisfy
\begin{equation*}
	0=\lambda_0<\lambda_1\le \lambda_2\le \cdots \to +\infty.
\end{equation*}

In 1982, Li and Yau \cite{LY82} introduced the concept of the conformal volume in order to study the Willmore conjecture and the first eigenvalue of compact surfaces.
\begin{defn}[cf. \cite{LY82}]
	Let $M$ be an $m$-dimensional compact Riemannian manifold with boundary that admits a conformal map $\phi: M\to \mathbb{S}^n$.
	Define
\begin{align}
	V_c(M, n, \phi)&:=\sup_{\gamma\in \confs} V (\gamma (\phi(M))),\nonumber\\
	V_c(M, n)&:=\inf_{\phi}V_c(M, n, \phi),\nonumber\\
	V_c(M)&:=\lim_{n\to +\infty}V_c(M,n).\label{eq-cv}
\end{align}
Here $\confs$ is the group of conformal diffeomorphisms of $\mathbb{S}^n$, and the infimum is over all non-degenerate conformal maps $\phi: M\to \mathbb{S}^n$.

We call $V_c(M, n, \phi), V_c(M, n)$ and $V_c(M)$ the $n$-conformal volume of $\phi$, the $n$-conformal volume of $M$, and the conformal volume $M$, respectively.
\end{defn}

We remark that, the conformal map $\phi$ exists for $n$ big enough due to the Nash embedding theorem (via the stereographic projection), and the limit in \eqref{eq-cv} exists since $V_c(M, n)\ge V_c(M, n+1)$ (cf. \cite{LY82}*{Fact 4}).

Let us set the following condition for convenience.
\begin{itemize}
	\item[\textbf{(C1)}] $(M, g)$ admits, up to a homothety, a minimal isometric immersion in $\mathbb{S}^n$ given by a subspace of the first eigenspace.
\end{itemize}

The first eigenvalue $\lambda_1$ can be bounded by the conformal volume from above. Precisely, we have

\begin{thm}[\cites{LY82, ESI86}]\label{thm-conf}
	Let	$(M,g)$ be an $m$-dimensional compact Riemannian manifold. Then the first nonzero eigenvalue $\lambda_1$ of the Laplacian (with Neumann boundary condition if $\partial M\neq \emptyset$) satisfies
		\begin{equation*}
			\lambda_1\le m\Bigl(\frac{V_c(M,n)}{V(M,g)}\Bigr)^{2/m}
		\end{equation*}
		for all $n$ for which $V_{c}(M, n)$ is defined.

		Equality implies (C1).
		If $M$ is closed, then (C1) implies equality as well.
\end{thm}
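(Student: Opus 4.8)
The plan is to feed the components of a suitably normalized conformal map into the variational characterization of $\la_1$, following the scheme of Hersch, Li--Yau and El Soufi--Ilias. Recall that, whether or not $\p M=\emptyset$, the first nonzero eigenvalue satisfies
\[
\la_1=\inf\Bigl\{\tfrac{\int_M|\nabla f|^2\,dV_g}{\int_M f^2\,dV_g}:\ f\in C^\infty(M),\ f\not\equiv0,\ \int_M f\,dV_g=0\Bigr\},
\]
where in the Neumann case no boundary condition is imposed on the admissible $f$. Fix a non-degenerate conformal map $\phi:M\to\mathbb{S}^n\subset\mathbb{R}^{n+1}$ and a conformal diffeomorphism $\gamma\in\confs$, and set $\psi=(\psi_1,\dots,\psi_{n+1})=\gamma\circ\phi$, so that $\sum_i\psi_i^2\equiv1$. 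The first task is to arrange that every component $\psi_i$ is an admissible test function, i.e. $\int_M\psi_i\,dV_g=0$ for all $i$.

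This balancing step is the main obstacle. The idea is a topological degree argument: realize enough of $\confs$ through the conformal dilations $\gamma_a$, $a\in B^{n+1}$, of the unit ball, and consider the center-of-mass map $c(a)=\frac{1}{V(M,g)}\int_M\gamma_a(\phi)\,dV_g\in\overline{B^{n+1}}$. One checks that $c$ extends continuously to $\overline{B^{n+1}}$ and restricts to the identity on $\p B^{n+1}$, since as $a$ tends to a boundary point the map $\gamma_a$ pushes almost all the mass to that point; a Brouwer-type argument then forces $c(a_0)=0$ for some interior $a_0$, and $\gamma=\gamma_{a_0}$ produces the mean-zero components. I expect the delicate point to be the continuity of $c$ up to the boundary when $\phi$ is merely non-degenerate rather than an embedding.

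Granting the balancing, the estimate is routine. Using each $\psi_i$ as a test function and summing over $i$, the identity $\sum_i\psi_i^2\equiv1$ gives $\int_M\sum_i\psi_i^2=V(M,g)$, while conformality gives $\sum_i|\nabla\psi_i|^2=|\nabla\psi|^2=m\rho^2$, where $\psi^*g_0=\rho^2g$ defines the conformal factor of $\psi$. Hence $\la_1 V(M,g)\le m\int_M\rho^2\,dV_g$. Since the volume of the image counted with multiplicity is $V(\gamma(\phi(M)))=\int_M\rho^m\,dV_g$, H\"older's inequality with exponents $m/2$ and $m/(m-2)$ (trivial when $m=2$) yields $\int_M\rho^2\,dV_g\le V(\gamma(\phi(M)))^{2/m}\,V(M,g)^{(m-2)/m}$. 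Combining, $\la_1\le m\bigl(V(\gamma(\phi(M)))/V(M,g)\bigr)^{2/m}\le m\bigl(V_c(M,n,\phi)/V(M,g)\bigr)^{2/m}$, and taking the infimum over all admissible $\phi$ gives the stated bound.

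For the equality discussion I would trace the two inequalities backwards. Equality in H\"older forces $\rho$ to be constant, so $\psi$ is homothetic; equality in the summed Rayleigh quotient forces each balanced component to satisfy $\Delta\psi_i=\la_1\psi_i$. By Takahashi's theorem an isometric (up to scale) immersion into $\mathbb{S}^n$ whose coordinate functions are eigenfunctions is minimal with $\la_1=m$, which is precisely (C1). For the converse when $M$ is closed, (C1) supplies a minimal isometric immersion $\phi$ by first eigenfunctions, so $\la_1=m$ and $V(\phi(M))=V(M,g)$; the volume monotonicity $V(\gamma(\phi(M)))\le V(\phi(M))$ for minimal submanifolds (whose proof integrates a divergence over $M$, and so relies on $\p M=\emptyset$) gives $V_c(M,n,\phi)=V(M,g)$, whence $m\bigl(V_c(M,n)/V(M,g)\bigr)^{2/m}\le m=\la_1$, which together with the upper bound yields equality.
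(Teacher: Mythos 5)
Your derivation of the inequality is correct and is essentially the route the paper itself takes: the paper does not prove Theorem \ref{thm-conf} directly but deduces it from the stronger harmonic-mean version (Theorem \ref{thm-conf-h}), whose proof in Sect.~\ref{Sect3.1} uses exactly your ingredients --- the Hersch-type balancing (Lemma \ref{lem4.1}), the identity $\sum_A|\nabla\Phi^A|^2=mf$ for the conformal factor, the summed Rayleigh quotients, and H\"older with exponents $m/2$ and $m/(m-2)$. Your worry about continuity of the center-of-mass map up to the boundary is the right one, but it resolves for non-degenerate conformal maps (point preimages have measure zero, so $c(a)\to p$ as $a\to p$), and the paper itself only cites this balancing step to \cites{LY82,ESI00}, so you are at the same level of rigor there.

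The genuine gap is in the equality discussion. Equality in the theorem reads $\lambda_1=m\bigl(V_c(M,n)/V(M,g)\bigr)^{2/m}$ with $V_c(M,n)=\inf_{\phi}V_c(M,n,\phi)$, and this infimum need not be attained. For any single fixed $\phi$ your chain only yields $\lambda_1\le m\bigl(V_c(M,n,\phi)/V(M,g)\bigr)^{2/m}$ with $V_c(M,n,\phi)\ge V_c(M,n)$, so equality in the theorem forces nothing about that particular $\phi$, and ``tracing the two inequalities backwards'' is not available. The correct argument (carried out in Sect.~\ref{Sect3.2} of the paper for the harmonic-mean version) takes a minimizing sequence $\phi_j$ with $V_c(M,n,\phi_j)\to V_c(M,n)$, each balanced as in \eqref{eq-4.15}, notes that the inequalities become equalities only in the limit, extracts a subsequence converging weakly in $W^{1,m}(M)$ and strongly in $L^2(M)$, and then upgrades to strong $W^{1,2}$ convergence (Claim 1) to produce a limiting conformal map $\psi$ whose components are genuine first eigenfunctions with constant conformal factor $\lambda_1/m$; only at that point do your H\"older and Takahashi conclusions apply and yield (C1). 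Without this compactness step the necessity of (C1) is unproved. Your converse direction (closed $M$, (C1) $\Rightarrow$ equality) via the volume monotonicity $V(M,(\gamma\circ\phi)^{*}h_1)\le V(M,g)$ for minimal immersions is exactly the paper's Theorem \ref{thm-ESI-1.1} and is fine.
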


This estimate was first obtained by Li and Yau \cite{LY82} for $m=2$, and then by El Soufi and Ilias \cite{ESI86} for general $m\ge 3$. Matei \cite{Mat13} generalized it to the $p$-Laplacian.
For higher eigenvalues of a closed manifold, Kokarev \cite{Kok20} recently proved that there is a constant $C(n,m)$ depending only on $m$ and $n$ such that
\begin{equation}\label{Kok}
	\lambda_k\le C(n,m)\Bigl(\frac{V_c(M,n)}{V(M,g)}\Bigr)^{2/m} k^{2/m}
\end{equation}
holds for any $k\ge 1$, which is compatible with the famous Weyl asymptotic formula:
\begin{equation*}
	\lambda_k V(M,g)^{2/m}\sim \frac{4\pi^2}{\omega_m^{2/m}}k^{2/m} \mbox{ as } k\to \infty,
\end{equation*}
where $\omega_m$ is the volume of the unit ball in $\mathbb{R}^m$.
This can be viewed as an improvement of Korevaar's result \cite{Kor93}, which says that
\begin{equation*}
	\lambda_k\le Ck^{2/m}
\end{equation*}
for some constant $C$ depending on $[g]$ in a rather implicit way.

In this paper, we consider the harmonic mean of the first $m$ nonzero eigenvalues.
We write the harmonic mean of $k$ positive numbers $a_1,\cdots, a_k$ as
\begin{equation*}
	\mathfrak{H}(a_1,\cdots, a_k)=\Bigl(\frac{a_1^{-1}+\cdots+a_k^{-1}}{k}\Bigr)^{-1}.
\end{equation*}
If  $a_1\le \cdots \le a_k$, then one can easily check that
\begin{equation}\label{eq-1.4}
	a_1=\mathfrak{H}(a_1)\le \mathfrak{H}(a_1,a_2)\le \cdots \le \mathfrak{H}(a_1,\cdots, a_{k-1})\le \mathfrak{H}(a_1,\cdots, a_k).
\end{equation}

The first main result in this paper is the following estimate involving the conformal volume for $\mathfrak{H}(\lambda_1,\cdots,\lambda_m)$.

\begin{thm}\label{thm-conf-h}
	Let	$M$ be an $m$-dimensional compact Riemannian manifold. Then the first $m$ nonzero eigenvalues of the Laplacian  (with Neumann boundary condition if $\partial M\neq \emptyset$) satisfy
		\begin{equation}\label{eq-conf-h}
			\mathfrak{H}(\lambda_1,\cdots,\lambda_m)\le m\Bigl(\frac{V_c(M,n)}{V(M,g)}\Bigr)^{2/m}
		\end{equation}
		for all $n$ for which $V_{c}(M, n)$ is defined.

		Equality implies $\lambda_1=\dots=\lambda_m$ and (C1).
		If $M$ is closed, then (C1) implies equality as well.
\end{thm}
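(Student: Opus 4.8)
The plan is to extend the El Soufi--Ilias argument behind Theorem~\ref{thm-conf}, replacing the single test function estimate by one that controls $\lambda_1,\dots,\lambda_m$ simultaneously. Fix $n$ with $V_c(M,n)$ defined and a non-degenerate conformal map $\phi:M\to\mathbb{S}^n\subset\mathbb{R}^{n+1}$. First I would renormalize by a conformal diffeomorphism: using Hersch's balancing lemma (a Brouwer / topological-degree argument applied to the $(n+1)$-parameter family of conformal dilations of $\mathbb{S}^n$), choose $\gamma\in\confs$ so that the components of $\Phi:=\gamma\circ\phi=(\Phi^1,\dots,\Phi^{n+1})$ satisfy the center-of-mass condition $\int_M\Phi^a\,dV=0$ for every $a$. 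Each $\Phi^a$ is then $L^2$-orthogonal to the constants, hence admissible as a test function, while the pointwise identities $\sum_a(\Phi^a)^2=1$ and $\sum_a|\nabla\Phi^a|^2=|d\Phi|^2=m\rho^2$ (with $\rho$ the conformal factor, $\Phi^\ast g_{\mathbb{S}^n}=\rho^2 g$) are preserved.

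The crucial reduction is to prove that for the balanced map
\begin{equation*}
	\mathfrak{H}(\lambda_1,\dots,\lambda_m)\,V(M,g)\le\int_M|d\Phi|^2\,dV .
\end{equation*}
Granting this, the proof closes exactly as for $\lambda_1$: since $\int_M|d\Phi|^2=m\int_M\rho^2$, Hölder's inequality with exponents $m/2$ and $m/(m-2)$ gives $\int_M\rho^2\le V(\Phi(M))^{2/m}V(M,g)^{1-2/m}$, whence $\int_M|d\Phi|^2\le m\,V(\gamma(\phi(M)))^{2/m}V(M,g)^{1-2/m}\le m\,V_c(M,n,\phi)^{2/m}V(M,g)^{1-2/m}$; taking the infimum over $\phi$ yields \eqref{eq-conf-h}. (When $m=2$ the Hölder step is an equality and $\int_M\rho^2$ is literally the area of the image.)

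For the displayed core inequality I would keep the reciprocal structure throughout: routing through the arithmetic mean is too lossy, since bounding $\tfrac1m\sum_i\lambda_i$ by $\tfrac1V\int_M|d\Phi|^2$ would be strictly stronger than the theorem and is in general false. Instead I would use the dual description $\sum_{i=1}^m\lambda_i^{-1}=\max_{\dim S=m}\tr(P_S\,G\,P_S)$, where $G=\Delta^{-1}$ is the Green operator on the orthogonal complement of the constants and $P_S$ is $L^2$-projection (Ky Fan's maximum principle). Testing against $m$-dimensional subspaces $S\subset\operatorname{span}\{\Phi^1,\dots,\Phi^{n+1}\}$ reduces the problem to the two quadratic forms on $\mathbb{R}^{n+1}$, the Gram form $Q_{ab}=\int_M\Phi^a\Phi^b\,dV$ (with $\tr Q=V(M,g)$) and the Dirichlet form $E_{ab}=\int_M\langle\nabla\Phi^a,\nabla\Phi^b\rangle\,dV$ (with $\tr E=\int_M|d\Phi|^2$). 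Diagonalizing the pencil $(E,Q)$ produces generalized eigenvalues $\mu_1\le\dots\le\mu_{n+1}$ and $L^2$-orthonormal, Dirichlet-diagonal combinations of the $\Phi^a$; the min-max principle then gives $\lambda_i\le\mu_i$, so the target becomes $\sum_{i=1}^m\mu_i^{-1}\ge mV(M,g)/\int_M|d\Phi|^2$, which I would attack by a Cauchy--Schwarz estimate engineered to reproduce the factor $m$ and the harmonic mean exactly, the equality case being that all $\Phi^a$ are first eigenfunctions.

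The hard part, and the step that genuinely needs the geometry rather than abstract spectral data, is precisely this last inequality. For arbitrary forms $Q,E$ it is false: a configuration with almost all of the $L^2$-mass concentrated on a single low-energy direction violates it. What rescues the estimate is that $Q$ and $E$ arise from a conformal immersion of $\mathbb{S}^n$: the pointwise constraints $\sum_a(\Phi^a)^2=1$ and $\Phi\perp d\Phi(TM)$ forbid such concentration, and they force $\tr E\ge\lambda_1 V(M,g)$ with near-equality only near condition (C1). I would therefore carry out the estimate using the pointwise orthogonal projection $P_x$ onto $d\Phi_x(T_xM)$ and the representation $\mu_i=\int_M\rho^2|P w_i|^2\,dV$ of each generalized Rayleigh quotient, using $\sum_a(\Phi^a)^2=1$ to localize. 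Finally, tracking equality: equality in Hölder forces $\rho$ constant, i.e.\ a homothetic minimal immersion, while tightness in the spectral step forces $\lambda_1=\dots=\lambda_m$ together with (C1); conversely, for closed $M$, (C1) yields a minimal immersion by first eigenfunctions whose eigenspace has dimension $\ge n+1>m$, so $\lambda_1=\dots=\lambda_m$ and the equality case of Theorem~\ref{thm-conf} upgrades to equality in \eqref{eq-conf-h}.
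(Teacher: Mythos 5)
Your setup (Hersch balancing) and your closing steps (H\"older, passage to the conformal volume) match the paper, and you have correctly isolated the crux: a bound of the form $\mathfrak{H}(\lambda_1,\dots,\lambda_m)V(M,g)\le\int_M|d\Phi|^2$. But your proposed route to that bound has a genuine gap. You diagonalize the pencil $(E,Q)$, obtain generalized eigenvalues $\mu_i\ge\lambda_i$, and reduce to $\sum_{i=1}^m\mu_i^{-1}\ge mV(M,g)\big/\tr E$ --- a statement you yourself note is false for abstract quadratic forms and which you then do not prove; ``a Cauchy--Schwarz estimate engineered to reproduce the factor $m$'' is a placeholder, not an argument. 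Worse, the pencil diagonalization actively obstructs the one piece of geometry you plan to use: the generalized eigenvectors $w_i$ are $Q$-orthonormal but not Euclidean-orthonormal, so the pointwise facts $\sum_a|\tilde{\nabla}\Phi^a|^2=m$ and $|\tilde{\nabla}(w\cdot\Phi)|^2\le|w|^2$ no longer combine to control $\sum_{i\le m}\int\rho^2|Pw_i|^2$, and the representation $\mu_i=\int_M\rho^2|Pw_i|^2$ cannot be summed against $\tr Q=V$ without controlling the (non-orthogonal) change of basis.

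The missing idea in the paper is elementary and stays entirely in Euclidean-orthonormal coordinates: after balancing, apply a further \emph{rotation} of $\mathbb{R}^{n+1}$, obtained from the QR decomposition of the matrix $d_{AB}=\int_M\Phi^Au_B$, so that $\int_M\Phi^Au_B=0$ for all $B<A$. Then $\Phi^A$ is admissible for $\lambda_A$, giving $\lambda_A\int_M(\Phi^A)^2\le\int_M|\nabla\Phi^A|^2$ for every $A=1,\dots,n+1$, and summing yields $V(M,g)\le\sum_{A=1}^{n+1}\lambda_A^{-1}\int_Mf|\tilde{\nabla}\Phi^A|^2$. Since the rotation preserves $\sum_A|\tilde{\nabla}\Phi^A|^2=m$ and $0\le|\tilde{\nabla}\Phi^A|^2\le1-(\Phi^A)^2\le1$, and the weights $\lambda_A^{-1}$ are nonincreasing, a rearrangement (replace $\lambda_A^{-1}$ by $\lambda_m^{-1}$ for $A>m$, redistribute the mass $m-\sum_{A\le m}|\tilde{\nabla}\Phi^A|^2$ onto the first $m$ slots, then upgrade $\lambda_m^{-1}$ back to $\lambda_A^{-1}$) gives exactly $V(M,g)\le\sum_{A=1}^m\lambda_A^{-1}\int_Mf$, i.e.\ the harmonic-mean bound. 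Separately, your equality discussion is too thin: the infimum defining $V_c(M,n)$ need not be attained, so the paper must run the argument along a minimizing sequence $\phi_j$, extract a $W^{1,m}$-weak / $L^2$-strong limit, upgrade to strong $W^{1,2}$ convergence to see the limit components are eigenfunctions, and only then deduce $\lambda_1=\dots=\lambda_m$ and $f\equiv\lambda_1/m$ (note also that for $m=2$ the H\"older step is an identity, so constancy of $f$ cannot come from there).
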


\begin{rem}
	For some special manifolds, such as compact symmetric spaces of rank 1 and the minimal Clifford torus with their canonical metric $g_{\mathrm{can}}$, we have $V_c(M, n)=V(M,\frac{\lambda_1}{m} g_{\mathrm{can}})$ for $n+1$ greater or equal to the multiplicity of $\lambda_1$ (cf. \cite{ESI86}*{pp.~266--267}).
	In these cases, the upper bound is explicit.
\end{rem}

Clearly, Theorem \ref{thm-conf-h} is stronger than Theorem \ref{thm-conf} due to \eqref{eq-1.4}.
For $m=2$, the upper bound can be controlled by the topology of the surface.
\begin{cor}\label{cor2.1}
	Let	$(M,g)$ be a closed surface\footnote{In this article, we always consider orientable surfaces except special declaration.}. 
	Then
	\begin{equation}
		\mathfrak{H}(\lambda_1,\lambda_2)\le
			\frac{8\pi}{V(M,g)} \Big[\frac{\genus(M)+3}{2}\Big],
			\label{eq-1.16}
	\end{equation}
	where $[\,\cdot\,]$ denotes the integer part.  In particular, for a 2-sphere we have
	\begin{equation}
		\mathfrak{H}(\lambda_1,\lambda_2)\le
			\frac{8\pi}{V(\mathbb{S}^2,g)},
			\label{eq-1.16-s}
	\end{equation}
\end{cor}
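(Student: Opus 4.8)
The plan is to specialize Theorem \ref{thm-conf-h} to the case $m=n=2$ and then bound the resulting $2$-conformal volume by the genus. Taking $m=n=2$ in \eqref{eq-conf-h} gives
\[
	\mathfrak{H}(\lambda_1,\lambda_2)\le 2\,\frac{V_c(M,2)}{V(M,g)},
\]
so everything reduces to proving the conformal-volume bound $V_c(M,2)\le 4\pi\bigl[\tfrac{genus(M)+3}{2}\bigr]$.

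To obtain this bound I would first use the conformal (hence complex) structure determined by $g$ to regard $M$ as a compact Riemann surface of genus $\gamma:=genus(M)$. By the classical gonality estimate (a consequence of the Riemann--Roch theorem, going back to Meis and used by Li and Yau \cite{LY82}), such a surface carries a non-constant meromorphic function, i.e.\ a holomorphic branched covering $\phi\colon M\to \mathbb{CP}^1=\mathbb{S}^2$, of degree $d\le\bigl[\tfrac{\gamma+3}{2}\bigr]$. Being holomorphic, $\phi$ is conformal, hence admissible in the definition of $V_c(M,2)$.

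It then remains to evaluate $V_c(M,2,\phi)$. For any $\eta\in\confs$ with $n=2$, the composite $\eta\circ\phi$ is again a branched covering of $\mathbb{S}^2$ of the same degree $d$, so the degree formula gives
\[
	V\bigl(\eta(\phi(M))\bigr)=\int_M(\eta\circ\phi)^{*}\,dA_{\mathbb{S}^2}=d\,V(\mathbb{S}^2)=4\pi d,
\]
independently of $\eta$. Hence $V_c(M,2,\phi)=4\pi d$, and taking the infimum over admissible maps yields $V_c(M,2)\le 4\pi d\le 4\pi\bigl[\tfrac{\gamma+3}{2}\bigr]$. Substituting into the first display proves \eqref{eq-1.16}; for $\mathbb{S}^2$ one has $\gamma=0$, so $d=1$ (the identity map is already a degree-one conformal map) and \eqref{eq-1.16-s} follows.

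The only substantial ingredient is the gonality bound $d\le\bigl[\tfrac{\gamma+3}{2}\bigr]$, which is classical; the remaining steps are direct manipulations with the degree formula. This is exactly the estimate Li and Yau used to derive $\lambda_1 V(M,g)\le 8\pi\bigl[\tfrac{\gamma+3}{2}\bigr]$, and the corollary is precisely the harmonic-mean sharpening of that inequality obtained by feeding the same conformal-volume bound into the stronger Theorem \ref{thm-conf-h} in place of Theorem \ref{thm-conf}.
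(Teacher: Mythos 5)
Your argument is correct and follows the same route as the paper: specialize Theorem \ref{thm-conf-h} to $m=2$ and feed in the conformal-volume bound $V_c(M,2)\le 4\pi\bigl[\frac{genus(M)+3}{2}\bigr]$, which the paper simply cites from Li--Yau and the El Soufi--Ilias remark rather than re-deriving via the gonality/degree computation as you do. (Minor attribution point: Li--Yau proved the bound with factor $genus(M)+1$; the improvement to $\bigl[\frac{genus(M)+3}{2}\bigr]$ via the Brill--Noether gonality estimate is the observation of El Soufi--Ilias.)
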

This corollary is directly obtained from Theorem \ref{thm-conf-h} since
Li and Yau \cite{LY82} pointed out that
\begin{equation}\label{eq-1.17}
	V_c(M,g)\le 4\pi (\genus(M)+1),
\end{equation}
and then El Soufi and Ilias \cite{ESI84} remarked that the factor $\genus(M)+1$ in \eqref{eq-1.17} can be improved to $\big[\frac{\genus(M)+3}{2}\big]$,

Let's review some related results for $m=2$. Obviously, Corollary \ref{cor2.1} implies
\begin{equation}\label{eq-1.18}
	\lambda_1\le \frac{8\pi}{V(M,g)}(\genus(M)+1),
\end{equation}
which was originally proved by Hersch \cite{Her70} for $M=\mathbb{S}^2$ (i.e., $\genus(M)=0$) and Yang and Yau \cite{YY80} for $\genus(M)\ge 1$. Actually, Yang and Yau \cite{YY80} proved a stronger result
\begin{equation*}
	\mathfrak{H}(\lambda_1,\lambda_2, \lambda_3)\le
	\frac{8\pi}{V(M,g)}(\genus(M)+1).
\end{equation*}
By \eqref{eq-1.17}, Li and Yau \cite{LY82} gave a simpler proof of \eqref{eq-1.18}; they also proved that (\cite{LY82}*{Corollary 2})
\begin{equation*}
	\mathfrak{H}(\lambda_1,\cdots, \lambda_k)
	\le n\frac{V_c(M, n-1)}{V(M)} \mbox{ for any $n\ge k$},
\end{equation*}
but the constant is not sharp.

Another remarkable estimate for higher eigenvalues of a 2-sphere $(\mathbb{S}^2,g)$ is
\begin{equation}\label{eq-Nad}
	\lambda_k(\mathbb{S}^2,g) \le \frac{8\pi k}{V(\mathbb{S}^2,g)}, \mbox{  for any  }  k\ge 1.
\end{equation}
The metric $g$ is smooth outside a finite number of conical singularities.
For $k=1$, this is just Hersch's result \cite{Her70} and the equality in \eqref{eq-Nad} is attained if and only if $g$ is a round metric.
For $k\ge 2$, the inequality in \eqref{eq-Nad} is strict, and the equality can be  attained in the limit by a sequence of metrics degenerating to a union of $k$ touching identical round spheres.
This was proved by Nadirashvili \cite{Nad02} (and Petrides \cite{Pet14} with a different argument) for $k=2$, Nadirashvili and Sire \cite{NS17} for $k=3$, and Karpukhin et al. \cite{KNPP21} for any $k\ge 2$ recently.
From \eqref{eq-Nad} we have
\begin{equation*}
	\mathfrak{H}(\lambda_1,\lambda_2)\le
	\frac{4}{3}
	\frac{8\pi}{V(\mathbb{S}^2,g)},
\end{equation*}
which is weaker than \eqref{eq-1.16-s}.
Actually, \eqref{eq-conf-h} cannot be derived from \eqref{Kok}.
Intuitively, our results imply that if $\lambda_1$ is close to the upper bound, then $\lambda_m$ cannot be far away from the same upper bound.

\subsection{Conformal upper bounds for Steklov eigenvalues}
Besides Neumann eigenvalues, Steklov eigenvalues are intensively studied for a compact Riemannian manifold $(M, g)$ with nonempty boundary $\partial M$.
A real number $\sigma$ is called a Steklov eigenvalue if there exists a non-zero function $u$ (which is called the Steklov eigenfunction corresponding to $\sigma$) on $M$ satisfying
\begin{equation*}
	\begin{cases}
	\Delta u=0, & \mbox{on } M;\\
	\frac{\partial u}{\partial\nu} \,\,=\sigma u, & \mbox{on } \partial M,
	\end{cases}
	\end{equation*}
where $\nu$ is the outward normal on $\partial M$.

The Steklov problem has a physical background, and it can be traced back to the turn of the 20th century, when Steklov studied liquid sloshing \cites{Ste02, Ste02a}. The readers can refer to \cite{KKK14} for the history and a very recent survey \cite{CGGS22} for research in this area.

The Steklov eigenvalues can be interpreted as the eigenvalues of the Dirichlet-to-Neumann operator $L$, which is defined as follows:
\begin{equation*}
	\begin{aligned}
		L: C^{\infty}(\partial M)&\to C^{\infty}(\partial M)\\
	Lu&=\frac{\partial \hat{u}}{\partial \nu},
	\end{aligned}
\end{equation*}
where $u$ is a function on $\partial M$ and $\hat{u}$ is the harmonic extension of $u$, i.e.,
\begin{equation*}
	\begin{cases}
	\Delta \hat{u}=0, & \mbox{on } M;\\
	\,\,\,\,\, \hat{u} = u, & \mbox{on } \partial M.
	\end{cases}
	\end{equation*}
$L$ is a nonnegative self-adjoint operator with discrete spectrum
\begin{equation*}
	0=\sigma_0<\sigma_1\le \sigma_2\le \cdots \to +\infty.
\end{equation*}

Fraser and Schoen \cite{FS11} gave an upper bound involving the \emph{relative conformal volume} for the first nonzero Steklov eigenvalue, which is analogous to Theorem \ref{thm-conf} for the first Neumann eigenvalue of the Laplacian.
\begin{thm}[\cite{FS11}*{Theorem 6.2}]\label{thm-FS}
	Let $(M, g)$ be a compact $m$-dimensional Riemannian manifold with nonempty boundary. Then the first nonzero Steklov eigenvalue $\sigma_1$ satisfies
	\begin{equation*}
		\sigma_1 V(\partial M)V(M)^{\frac{2-m}{m}}	\le m V_{rc}(M, n)^{2/m}
	\end{equation*}
	for all $n$ for which $V_{rc}(M, n)$ is defined.

	Equality implies (C2).
\end{thm}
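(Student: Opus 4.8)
The plan is to adapt the Li--Yau--El Soufi--Ilias argument behind Theorem \ref{thm-conf} to the Steklov setting, following the strategy of Fraser and Schoen. The starting point is the variational characterization
\begin{equation*}
	\sigma_1=\inf\Bigl\{\frac{\int_M|\nabla u|^2\,dV}{\int_{\partial M}u^2\,dA}\ :\ u\in H^1(M),\ \int_{\partial M}u\,dA=0,\ u\not\equiv 0\Bigr\},
\end{equation*}
so that any $H^1$ test function with vanishing boundary average produces an upper bound for $\sigma_1$. The idea is to manufacture $n$ such test functions from the components of a suitably normalized conformal map.

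First I would fix a conformal map $\phi:M\to \overline{B^n}$ with $\phi(\partial M)\subset \mathbb{S}^{n-1}=\partial B^n$ and try to use its components. Since the admissibility constraint is the vanishing of the boundary center of mass $\int_{\partial M}\phi\,dA\in\mathbb{R}^n$, which $\phi$ need not satisfy, the key preliminary step is a Hersch-type balancing: I would show there exists $\gamma\in\confb$ such that $\psi:=\gamma\circ\phi$ satisfies $\int_{\partial M}\psi\,dA=0$. This is the part I expect to be the main obstacle. The group $\confb$ is noncompact, so one parametrizes its relevant part by points $a\in B^n$ and studies the map $a\mapsto \frac{1}{V(\partial M)}\int_{\partial M}\gamma_a\circ\phi\,dA$; the crux is to verify that this map extends continuously to $\overline{B^n}$ and restricts on the boundary sphere to a map homotopic to the identity, so that a Brouwer degree / no-retraction argument forces an interior zero. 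Controlling the degeneration of $\gamma_a$ as $a\to\mathbb{S}^{n-1}$ is the delicate analytic point.

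Once the balanced map $\psi=(\psi_1,\dots,\psi_n)$ is in hand, each $\psi_i$ is admissible, and summing the Rayleigh quotient gives
\begin{equation*}
	\sigma_1\sum_{i=1}^n\int_{\partial M}\psi_i^2\,dA\le \sum_{i=1}^n\int_M|\nabla\psi_i|^2\,dV.
\end{equation*}
Because $\psi(\partial M)\subset\mathbb{S}^{n-1}$ we have $\sum_i\psi_i^2\equiv 1$ on $\partial M$, so the left-hand side is exactly $\sigma_1V(\partial M)$. For the right-hand side I would exploit conformality: writing $\psi^{*}g_{\mathrm{eucl}}=\rho\,g$, one has $\sum_i|\nabla\psi_i|^2=|d\psi|^2=m\rho$ while the image volume is $V(\psi(M))=\int_M\rho^{m/2}\,dV$. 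Applying Hölder's inequality with exponents $m/2$ and $m/(m-2)$ then yields
\begin{equation*}
	\sum_{i=1}^n\int_M|\nabla\psi_i|^2\,dV=m\int_M\rho\,dV\le m\,V(\psi(M))^{2/m}V(M)^{\frac{m-2}{m}}.
\end{equation*}

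Combining the two displays and rearranging gives $\sigma_1V(\partial M)V(M)^{\frac{2-m}{m}}\le m\,V(\psi(M))^{2/m}$. Since $\psi=\gamma\circ\phi$ with $\gamma\in\confb$, the image volume is bounded by the relative conformal volume $V_{rc}(M,n,\phi)=\sup_{\gamma\in\confb}V(\gamma\circ\phi(M))$; taking the infimum over all admissible $\phi$ replaces this by $V_{rc}(M,n)$ and finishes the inequality. For the equality discussion I would trace back the two inequalities used: equality in Hölder forces $\rho$ to be constant (so $\psi$ is, up to homothety, an isometric immersion), and equality in the Rayleigh quotient forces each $\psi_i$ to be a Steklov eigenfunction for $\sigma_1$. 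Together these say that $\psi$ is, up to homothety, a free-boundary minimal immersion into the ball by first Steklov eigenfunctions, which is precisely condition (C2).
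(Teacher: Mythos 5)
Your argument for the inequality is correct and is essentially the route the paper itself takes (the statement is quoted from Fraser--Schoen, but the paper reruns the same scheme in Sect.~\ref{Sect4.1} to prove the stronger Theorem~\ref{thm-FS-h}): Hersch-type balancing of the boundary center of mass by a conformal automorphism of the ball, the components of the balanced map as test functions in the Rayleigh quotient, $\sum_i\psi_i^2\equiv 1$ on $\partial M$, $\sum_i|\nabla\psi_i|^2=m\rho$ by conformality, and H\"older. The point you flag as delicate (continuity of the center-of-mass map up to $\partial B^n$ and the degree argument) is indeed the content of the balancing lemma, and your use of arbitrary $H^1$ test functions rather than harmonic extensions is legitimate since harmonic extension only decreases the Dirichlet energy.

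There is, however, a genuine gap in your equality discussion. Equality in the theorem is equality against $V_{rc}(M,n)=\inf_{\phi}V_{rc}(M,n,\phi)$, and this infimum need not be attained by any conformal map $\phi$. For a fixed $\phi$ your chain of inequalities ends with $V(\psi(M))^{2/m}\le V_{rc}(M,n,\phi)^{2/m}$, which is in general strictly larger than $V_{rc}(M,n)^{2/m}$; so equality in the theorem does not let you ``trace back'' the inequalities for any single $\phi$. One must instead take a minimizing sequence $\phi_j$, show the coordinate functions are bounded in $W^{1,m}(M)$, extract a limit $\psi$ that converges strongly enough to remain a conformal map with the required properties, and only then argue that each inequality is saturated for $\psi$ — this is exactly the compactness analysis the paper carries out in Sect.~\ref{Sect5.2} (and Sect.~\ref{Sect3.2} for the Neumann case), and it is the part the author explicitly singles out as requiring care. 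A second, smaller point: when $m=2$ the H\"older step is an identity, so you cannot conclude that $\rho$ is constant; this is consistent with (C2), which only asserts an isometric minimal immersion for $m>2$ and otherwise yields a conformal harmonic map that is an isometry on $\partial M$ after rescaling.
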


Let us explain the condition (C2) and the notation $V_{rc}(M,n)$ in the above theorem.
\begin{itemize}
	\item[\textbf{(C2)}] There exists a conformal harmonic map $\phi: M\to B^n$ which (after rescaling the metric $g$) is an isometry on $\partial M$, with $\phi(\partial M)\subset \partial B^n$ and such that $\phi(M)$ meets $\partial B^n$ orthogonally along $\phi(\partial M)$.
	For $m>2$,  this map is an isometric minimal immersion of $M$ to its image. Moreover, the immersion is given by a subspace of the first eigenspace.
\end{itemize}
\begin{defn}[cf. \cite{FS11}]
	Let $M$ be an $m$-dimensional compact Riemannian manifold with boundary that admits a conformal map $\phi: M\to B^n$ with $\phi(\partial M)\subset \partial B^n$. Define
	\begin{align}
		V_{rc}(M, n, \phi)&:= \sup_{\gamma\in \confb} V (\gamma (\phi(M))),\nonumber\\
		V_{rc}(M, n)&:= \inf_{\phi} V_{rc}(M, n, \phi),\nonumber\\
		V_{rc}(M)&:=\lim_{n\to +\infty} V_{rc}(M,n).\label{eq1.12}
	\end{align}
	Here $\confb$ is the group of conformal diffeomorphisms of $B^n$, and the infimum is over all non-degenerate conformal maps $\phi: M\to B^n$ with $\phi(\partial M)\subset \partial B^n$.

	We call $V_{rc}(M, n, \phi), V_{rc}(M, n)$ and $V_{rc}(M)$ the relative $n$-conformal volume of $\phi$, the relative $n$-conformal volume of $M$, and the relative conformal volume of $M$, respectively.
\end{defn}

Similar to the conformal volume, the existence of the limit in \eqref{eq1.12} is from the monotonicity of $V_{rc}(M, n)$ (cf. \cite{FS11}*{Lemma 5.7}).

We extend Theorem \ref{thm-FS} to the harmonic mean of the first $m$ nonzero eigenvalues.

\begin{thm}\label{thm-FS-h}
	Let $(M, g)$ be a compact $m$-dimensional Riemannian manifold with nonempty boundary.
	Then the first $m$ nonzero Steklov eigenvalues satisfy
	\begin{equation}\label{eq-FS-h}
		\mathfrak{H}(\sigma_1,\cdots,\sigma_m) V(\partial M)V(M)^{\frac{2-m}{m}}	\le m V_{rc}(M, n)^{2/m}
	\end{equation}
	for all $n$ for which $V_{rc}(M, n)$ is defined.

	Equality implies $\sigma_1=\dots=\sigma_m$ and (C2).
\end{thm}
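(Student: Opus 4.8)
The plan is to follow the Fraser--Schoen proof of Theorem \ref{thm-FS}, replacing the single test inequality for $\sigma_1$ by a spectral comparison that controls $\sigma_1,\dots,\sigma_m$ simultaneously. Fix a non-degenerate conformal map $\phi\colon M\to B^n$ with $\phi(\partial M)\subset\partial B^n$. As in \cite{FS11}, a Hersch-type center-of-mass argument in the ball produces $\gamma\in\confb$ for which $f:=\gamma\circ\phi=(f_1,\dots,f_n)$ is again conformal, satisfies $\sum_\alpha f_\alpha^2=1$ on $\partial M$, and is balanced, i.e. $\int_{\partial M}f_\alpha\,dA=0$ for every $\alpha$. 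Consequently each linear combination $f_v:=\sum_\alpha v_\alpha f_\alpha$ (for $v\in\mathbb{R}^n$) is admissible in the variational characterization of the Steklov spectrum.

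On $\mathbb{R}^n$ I introduce the symmetric forms $Q_{\alpha\beta}:=\int_{\partial M}f_\alpha f_\beta\,dA$ and $G_{\alpha\beta}:=\int_M\langle\nabla f_\alpha,\nabla f_\beta\rangle\,dV$, and let $\lambda_1\le\dots\le\lambda_n$ be the eigenvalues of the pencil $(G,Q)$. Since $\int_M|\nabla f_v|^2$ dominates the Dirichlet energy of the harmonic extension of $f_v|_{\partial M}$, the min--max principle for the Dirichlet-to-Neumann operator gives $\lambda_j\ge\sigma_j$ for $1\le j\le m$, hence
\begin{equation*}
	\sum_{j=1}^m\frac1{\sigma_j}\ \ge\ \sum_{j=1}^m\frac1{\lambda_j}.
\end{equation*}
The conformal structure enters through the two traces: $\tr Q=\int_{\partial M}\sum_\alpha f_\alpha^2\,dA=V(\partial M)$, while conformality of $f$ on the interior gives $\sum_\alpha|\nabla f_\alpha|^2=m\rho^2$ pointwise, so Hölder's inequality yields $\tr G=m\int_M\rho^2\,dV\le m\,V(\gamma(\phi(M)))^{2/m}V(M)^{(m-2)/m}$.

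The heart of the matter is the harmonic-mean estimate
\begin{equation*}
	\sum_{j=1}^m\frac1{\lambda_j}\ \ge\ \frac{m\,\tr Q}{\tr G},
\end{equation*}
which I expect to be the main obstacle. It is \emph{false} for arbitrary positive forms $G,Q$ (a single small eigenvalue of $Q$ transverse to the range of $G$ already breaks it), so the argument must exploit the special geometry of the pair: $G=\int_M\rho^2 P_{V_p}\,dV$ is an average of rank-$m$ conformal projections onto the tangent images $V_p=df_p(T_pM)$, $Q=\int_M F\otimes F\,dV$ is an average of rank-one projections onto the unit position field $F=f$, and $F(p)\perp V_p$ at every point. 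I would prove it by simultaneously diagonalizing the pencil, feeding the $m$ lowest generalized eigenfunctions into a Schur--Horn/majorization (AM--HM) argument, and using the pointwise rank-$m$ bound on $G$ to control the interaction with $Q$.

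Combining the three displays gives $\mathfrak{H}(\sigma_1,\dots,\sigma_m)\,V(\partial M)V(M)^{\frac{2-m}{m}}\le m\,V(\gamma(\phi(M)))^{2/m}$ for the balancing $\gamma$; taking the supremum over $\gamma\in\confb$ and then the infimum over $\phi$ replaces $V(\gamma(\phi(M)))$ by $V_{rc}(M,n)$ and yields \eqref{eq-FS-h}. Finally, equality forces equality throughout: in Hölder, so that $\rho$ is constant and $\phi$ is, after rescaling, an isometry whose conformal image has volume $V_{rc}(M,n)$; and in the harmonic-mean step, so that $\lambda_1=\dots=\lambda_m$ and $G=\sigma_1 Q$, whence $\sigma_1=\dots=\sigma_m$ and the components of $f$ lie in the first Steklov eigenspace. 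Together these recover condition (C2), consistent with Theorem \ref{thm-FS} when $m=1$.
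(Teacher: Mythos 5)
Your skeleton (Hersch balancing, test functions from the coordinates of a conformal map to $B^n$, H\"older to pass from $\int_M f$ to $V_{rc}$) matches the paper's, and you have correctly located the crux: some harmonic-mean/rearrangement inequality tying $\sum_{j\le m}\sigma_j^{-1}$ to $\tr G/\tr Q$, which you rightly observe is false for arbitrary positive forms. But that step is exactly what you do not prove --- you only say you ``would prove it'' by diagonalizing the pencil $(G,Q)$ and invoking Schur--Horn/majorization --- and as formulated the route has a real obstruction. The pointwise constraints that must save the inequality, namely $0\le|\tilde\nabla f_v|^2=|P_{V_p}v|^2\le|v|^2$ and $\sum_\alpha|\tilde\nabla f_\alpha|^2=m$, are tied to the \emph{Euclidean} inner product on $\mathbb{R}^n$, whereas the generalized eigenvectors of the pencil are orthonormal with respect to $Q$, not with respect to the Euclidean structure; so after diagonalizing the pencil you lose the bounds $|\tilde\nabla f_{w_j}|^2\le 1$ and $\sum_j|\tilde\nabla f_{w_j}|^2=m$ that any majorization argument would need. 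The paper avoids the pencil altogether: it applies a genuine orthogonal rotation of $\mathbb{R}^n$, obtained from the QR decomposition of the matrix $\bigl(\int_{\partial M}\Phi^A v_B\bigr)$, so that the rotated coordinate $\Phi^A$ is $L^2(\partial M)$-orthogonal to the eigenfunctions $v_0,\dots,v_{A-1}$ and hence is an admissible test function for $\sigma_A$ itself; because the rotation is Euclidean-orthogonal, the pointwise identities $\sum_A(\Phi^A)^2=1$ on $\partial M$ and $0\le|\tilde\nabla\Phi^A|^2\le1$, $\sum_A|\tilde\nabla\Phi^A|^2=m$ survive, and a two-step reshuffling (push the weight of the terms $A\ge m$ down to $\sigma_m^{-1}$, then redistribute the excess $1-|\tilde\nabla\Phi^A|^2\ge0$ back to $\sigma_A^{-1}$ for $A<m$) gives $V(\partial M)\le\sum_{A=1}^m\sigma_A^{-1}\int_M f$ directly, with no intermediate $\lambda_j$'s. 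If you want to salvage your version, you should replace the pencil by this Gram--Schmidt rotation; otherwise the central inequality remains unestablished.

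A secondary gap is the equality case. Neither the infimum over $\phi$ nor the supremum over $\gamma$ need be attained, so ``equality forces equality throughout'' cannot be read off from a single map: the paper takes a minimizing sequence $\phi_j$, extracts a limit $\psi$ weakly in $W^{1,m}$ and strongly in $L^2$, upgrades to strong $W^{1,2}$ convergence, shows each $\psi^A$ is a Steklov eigenfunction, and only then runs the equality analysis (including a separate argument, Lemma \ref{lem-5}, splitting the cases $n=m$, $m=2$ and $m\ge3$ to get $\sigma_1=\dots=\sigma_m$; note that for $m=2$ the H\"older step is vacuous and constancy of the conformal factor is not available). Also, your closing remark that the statement is ``consistent with Theorem \ref{thm-FS} when $m=1$'' should read: it reduces to Theorem \ref{thm-FS} upon replacing the harmonic mean by $\sigma_1$, since the paper assumes $m\ge2$ throughout.
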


For compact surfaces, we have
\begin{thm}\label{thm-FS-h-2}
	Let $(M, g)$ be a compact surface with $k$ boundary components. Then
	\begin{equation*}
		\mathfrak{H}(\sigma_1,\sigma_2) V(\partial M)	\le 2 (\genus(M)+k)\pi.
	\end{equation*}

	In particular, for a compact simply-connected surface with boundary, we have
	\begin{equation*}
		\mathfrak{H}(\sigma_1,\sigma_2) V(\partial M)	\le 2\pi,
	\end{equation*}
	Moreover, equality holds if and only if there is a conformal map from $M$ to the unit disk which is an isometry on the boundary.
\end{thm}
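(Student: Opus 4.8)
The plan is to reduce everything to the $m=2$ case of Theorem \ref{thm-FS-h} together with a sharp topological bound on the relative conformal volume. Write $\gamma=genus(M)$. For $m=2$ the exponent $V(M)^{(2-m)/m}$ is trivial and $mV_{rc}(M,n)^{2/m}=2V_{rc}(M,n)$, so Theorem \ref{thm-FS-h} reads $\mathfrak{H}(\sigma_1,\sigma_2)\,V(\partial M)\le 2\,V_{rc}(M,n)$ for every $n$ for which the right-hand side is defined. Hence it suffices to produce one non-degenerate conformal map that witnesses $V_{rc}(M,2)\le(\gamma+k)\pi$.

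The key construction is a proper holomorphic branched cover $f\colon M\to B^2$ of the unit disk with $f(\partial M)\subset\partial B^2$. By Gabard's theorem (the bordered analogue of the Brill--Noether bound; Ahlfors' older result gives a slightly weaker degree), every compact Riemann surface of genus $\gamma$ with $k$ boundary components admits such an $f$ of degree $d\le\gamma+k$. Being holomorphic and boundary-preserving, $f$ shows that $V_{rc}(M,2)$ is defined. For any $\rho\in\confb$ the composition $\rho\circ f$ is again proper onto $B^2$ of degree $d$, so its area counted with multiplicity is $\int_M(\rho\circ f)^{\ast}\omega=\deg(\rho\circ f)\cdot V(B^2)=d\pi$, independent of $\rho$, where $\omega$ is the Euclidean area form of $B^2$. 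Thus $V_{rc}(M,2,f)=d\pi$ and $V_{rc}(M,2)\le(\gamma+k)\pi$, which when inserted into the $m=2$ estimate yields the first inequality. For simply connected $M$ one has $\gamma=0$, $k=1$, so $d=1$ and $f$ is a biholomorphism $M\cong B^2$; this gives $V_{rc}(M,2)\le\pi$ and $\mathfrak{H}(\sigma_1,\sigma_2)\,V(\partial M)\le 2\pi$.

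For the equality statement in the simply connected case I would argue both implications. ($\Leftarrow$) If $\psi\colon M\to B^2$ is conformal and an isometry on $\partial M$, then in dimension two the Dirichlet energy $\int_M|\nabla u|^2\,dV_g$ is a conformal invariant while the boundary integral $\int_{\partial M}u^2\,ds_g$ is preserved by $\psi$; hence $u\mapsto u\circ\psi^{-1}$ identifies the Steklov Rayleigh quotients of $M$ and $B^2$, so $\sigma_k(M)=\sigma_k(B^2)$ for all $k$. Since $\sigma_1(B^2)=\sigma_2(B^2)=1$ and $V(\partial M)=V(\partial B^2)=2\pi$, we get $\mathfrak{H}(\sigma_1,\sigma_2)\,V(\partial M)=2\pi$. ($\Rightarrow$) Conversely, equality forces equality in the $m=2$ case of Theorem \ref{thm-FS-h}, so $\sigma_1=\sigma_2$ and (C2) holds: there is a conformal free boundary minimal immersion $\phi\colon M\to B^n$, an isometry on $\partial M$ after rescaling, whose components are first Steklov eigenfunctions. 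By the conformal invariance just used, the Steklov spectrum of $M$ equals that of the disk, whose first eigenspace is exactly the two-dimensional span of the coordinate functions; hence the $n$ components of $\phi$ lie in a space of dimension at most two, so $\phi(M)$ is contained in a $2$-plane $P$. Therefore $\phi\colon M\to P\cap B^n=B^2$ is the desired conformal map onto the unit disk, isometric on the boundary.

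The main obstacle is the sharp degree bound $d\le\gamma+k$. A bound routed through the conformal volume of the orientable double $\hat M$ (whose genus is $2\gamma+k-1$) is lossy in the genus and only yields $V_{rc}(M)\le\pi(2\gamma+k)$, so the optimal constant genuinely rests on Gabard's existence theorem for low-degree proper holomorphic maps of bordered surfaces to the disk. A secondary technical point is the $(\Rightarrow)$ rigidity: identifying the free boundary minimal disk furnished by (C2) with a flat (planar) disk, which is the Steklov counterpart of the classical rigidity of $\lambda_1$-minimal immersions and here follows from the explicit first Steklov eigenspace of the disk.
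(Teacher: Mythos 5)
Your proof of the inequality is essentially the paper's: the paper also takes the Ahlfors--Gabard proper conformal branched cover $\phi\colon M\to D$ of degree $d\le genus(M)+k$ and feeds its two coordinate functions into the $m=2$ test-function estimate, obtaining $\mathfrak{H}(\sigma_1,\sigma_2)V(\partial M)\le 2V(\phi(M))=2d\pi$. The only difference is that you route this through the quantity $V_{rc}(M,2)$; note that whenever $M$ is not a disk the Gabard map has branch points (Riemann--Hurwitz), so it is not a \emph{non-degenerate} conformal map and strictly speaking does not compete in the infimum defining $V_{rc}(M,2)$. The paper avoids this by applying the Rayleigh-quotient chain (the analogue of \eqref{eq-5.10}) to the branched cover directly, which is harmless since the degeneracy locus has measure zero; your argument is fine once rephrased this way.

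There is, however, a genuine gap in your ($\Rightarrow$) direction of the rigidity statement. After extracting $\sigma_1=\sigma_2$ and the conformal harmonic map $\phi\colon M\to B^n$ from the equality case of Theorem \ref{thm-FS-h}, you assert ``by the conformal invariance just used, the Steklov spectrum of $M$ equals that of the disk.'' At that stage you have not yet produced a conformal map to the disk that is an isometry on the boundary --- that is precisely what you are trying to prove --- and the Steklov spectrum is \emph{not} a conformal invariant (only the Dirichlet energy is; the boundary measure changes under a conformal map that is not a boundary isometry). So you cannot conclude that the first Steklov eigenspace of $M$ is two-dimensional, and the subsequent claim that $\phi(M)$ lies in a $2$-plane is unsupported. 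The clean repair is shorter: since $\sigma_1=\sigma_2$, equality gives $\sigma_1 V(\partial M)=\mathfrak{H}(\sigma_1,\sigma_2)V(\partial M)=2\pi$, i.e.\ equality in Weinstock's inequality, and Weinstock's rigidity (equivalently, the Fraser--Schoen analysis of equality in their Theorem 2.3, or the fact that a free boundary minimal surface in $B^n$ of area $\pi$ is a flat equatorial disk) yields the conformal map to the unit disk that is an isometry on $\partial M$. Note that without first establishing $\sigma_1=\sigma_2$ one only gets $\sigma_1 V(\partial M)\le 2\pi$, so the reduction to Weinstock genuinely uses the harmonic-mean equality analysis.
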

This theorem extends an estimate of Fraser-Schoen (\cite{FS11}*{Theorem 2.3}) saying
\begin{equation}\label{eq-FS-2}
	\sigma_1 V(\partial M)	\le 2 (\genus(M)+k)\pi
\end{equation}
under the same assumptions.
When $\genus(M)=0$, \eqref{eq-FS-2} was obtained by Weinstock \cite{Wei54} for $k=1$ and it is sharp; however, it is not sharp for $k\ge 2$ (see \cite{FS11}*{Theorem 2.3}).
These estimates for Steklov eigenvalues are analogous to the estimates \eqref{eq-1.16} and \eqref{eq-1.18} for eigenvalues of the Laplacian.
We also have
\begin{thm}\label{thm-FS-h-3}
	Let $(M, g)$ be a compact surface with boundary. Then
	\begin{equation*}
		\mathfrak{H}(\sigma_1,\sigma_2) V(\partial M)	<8\pi(\genus(M)+1).
	\end{equation*}

	In particular, for a compact surface with boundary of genus $0$, we have
	\begin{equation*}
		\mathfrak{H}(\sigma_1,\sigma_2) V(\partial M)	< 8\pi.
	\end{equation*}
\end{thm}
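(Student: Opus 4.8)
The plan is to deduce the estimate from Theorem~\ref{thm-FS-h} in the case $m=2$, where the factor $V(M)^{(2-m)/m}=V(M)^{0}=1$ disappears and one is left with
\[
	\mathfrak{H}(\sigma_1,\sigma_2)\,V(\partial M)\le 2\,V_{rc}(M,n)
\]
for every admissible $n$. Everything therefore reduces to the purely conformal estimate
\[
	V_{rc}(M,n)<4\pi\bigl(genus(M)+1\bigr),
\]
\emph{with a right-hand side that does not involve the number $k$ of boundary components}. I would first point out why a new argument is needed: Theorem~\ref{thm-FS-h-2} already gives the weaker bound $V_{rc}(M,n)\le \pi(genus(M)+k)$, which yields the desired inequality whenever $k\le 3\,genus(M)+3$, so the real content lies in the regime of many boundary components, where only a $k$-independent bound will do.

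To prove the displayed conformal estimate I would pass to an auxiliary \emph{closed} surface of the same genus. Capping off each boundary circle of $M$ with a conformal disk produces a closed Riemann surface $\widehat M$ with $genus(\widehat M)=genus(M)=:g$. On a closed surface of genus $g$ there is, by the Li--Yau/El~Soufi--Ilias bound \eqref{eq-1.17}, a conformal branched covering $\psi\colon\widehat M\to\mathbb{S}^2$ of degree $d\le g+1$, whose spherical image area equals $4\pi d\le 4\pi(g+1)$. The task is then to convert $\psi$, restricted to the subsurface $M\subset\widehat M$, into a genuine \emph{relative} conformal map $\phi\colon M\to B^n$ with $\phi(\partial M)\subset\partial B^n$, in such a way that the sphere area computation still controls $V_{rc}(M,n,\phi)$. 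For this I would use the hemisphere model of the ball: realizing $B^n$ as an open hemisphere of $\mathbb{S}^{n}$ whose equator plays the role of $\partial B^n$, a relative conformal map $M\to B^n$ is the same datum as a conformal map of $M$ into $\mathbb{S}^n$ carrying $\partial M$ to the equator, which in turn doubles across the equator to a reflection-equivariant conformal map of the double into $\mathbb{S}^n$. Producing the required equivariant map from $\psi$ and bookkeeping the hemisphere-versus-flat normalization in the definition of $V_{rc}$ should give $V_{rc}(M,n)\le 4\pi(g+1)$.

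The inequality will in fact be \emph{strict}. Intuitively, the image of $M$ omits the positive-area images of the discarded capping disks, so the supremum defining $V_{rc}(M,n)$ stays strictly below the full conformal volume $V_c(\widehat M,n)\le 4\pi(g+1)$ of the closed surface; alternatively, equality would force the equality case of Theorem~\ref{thm-FS} (condition (C2)), which is incompatible with the branched-cover geometry just used. Combining the strict conformal bound with the reduction yields $\mathfrak{H}(\sigma_1,\sigma_2)\,V(\partial M)<8\pi(g+1)$, and for $g=0$ the right-hand side collapses to $8\pi$, as claimed.

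The step I expect to be the main obstacle is exactly the middle one: manufacturing, out of an arbitrary degree-$(g+1)$ branched cover of the capped surface, a relative conformal map into some ball whose relative conformal volume is still bounded by $4\pi(g+1)$ \emph{uniformly in $k$}, while honestly reconciling the round-hemisphere area used on $\widehat M$ with the Euclidean area used in the definition of $V_{rc}(M,n)$. This is precisely where the tension lies, since capping gives the correct ($k$-independent) genus but destroys the boundary, whereas the naive double restores the boundary condition but inflates the genus to $2g+k-1$. Securing the strict inequality, rather than merely ``$\le$'', also demands care: it must be extracted from the genuine loss incurred in passing from the closed surface to the bordered surface $M$, and must survive the passage to the supremum over $\confb$.
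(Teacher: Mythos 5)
Your reduction to the claim $V_{rc}(M,n)<4\pi(genus(M)+1)$ is where the argument breaks down, and you have correctly identified that the middle step is not actually carried out. The difficulty is not just technical bookkeeping: the relative conformal volume is defined through maps $\phi\colon M\to B^n$ with $\phi(\partial M)\subset\partial B^n$ and a supremum of \emph{Euclidean} image volume over all of $\confb$, and neither the hemisphere model (whose doubling inflates the genus to $2\,genus(M)+k-1$, reintroducing the $k$-dependence you are trying to kill) nor the capped surface $\widehat M$ (whose branched cover to $\mathbb{S}^2$ does not respect $\partial M$ at all) produces an admissible competitor for $V_{rc}$. No $k$-independent bound on $V_{rc}(M,n)$ is established, and the strictness argument you sketch for it (``omitted capping disks'', or appeal to the failure of (C2)) is not made precise either. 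As written, the proof does not go through.

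The paper avoids this entirely by \emph{not} routing the estimate through Theorem \ref{thm-FS-h} or $V_{rc}$. The variational characterization of $\sigma_1,\sigma_2$ only requires test functions with controlled Dirichlet energy and boundary $L^2$-norm; there is no need for the test map to land in a ball with boundary on the sphere. One takes a conformal (branched) map $\psi\colon M\to(\mathbb{S}^2,h_1)$ of degree $\deg(\psi)\le genus(M)+1$ (the $k$-independent degree coming from the closed surface of the same genus), balances it by a conformal automorphism of $\mathbb{S}^2$ so that the three coordinate functions $\psi^1,\psi^2,\psi^3$ are admissible test functions for the \emph{boundary} measure, and uses $\sum_A(\psi^A)^2=1$ on $\partial M$ together with the conformal invariance of the Dirichlet energy in dimension $2$ to get
\begin{equation*}
\mathfrak{H}(\sigma_1,\sigma_2)\,V(\partial M)\le\sum_{A=1}^{3}\int_M|\nabla\psi^A|^2=2\,V(\psi(M))<2\cdot 4\pi\cdot\deg(\psi)\le 8\pi(genus(M)+1),
\end{equation*}
the strict middle inequality being immediate because $M$ is a proper subsurface of the capped closed surface, so its image cannot exhaust the sphere with full multiplicity. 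If you want to salvage your write-up, replace the entire $V_{rc}$ reduction by this direct test-function argument; your observation that Theorem \ref{thm-FS-h-2} already suffices when $k\le 3\,genus(M)+3$ is correct but then unnecessary.
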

This extends some corresponding estimates for $\sigma_1$ (cf. \cite{Kok14}*{Theorem $A_1$} and \cite{CGGS22}*{Theorem 3.6}).

\subsection{Reilly inequalities in space forms}
The conformal upper bounds are proved by finding suitable test functions via conformal maps.
This technique also allows us to obtain some extrinsic upper bounds for closed submanifolds in space forms.

Let $(\R, h_c)$ be the $n$-dimensional simply-connected space form of constant curvature $c$ equipped with the canonical metric $h_c$, namely, $\R$ represents the Euclidean space $\mathbb{R}^{n}$, the hyperbolic space $\H$ and the unit sphere $\S$ for $c=0, -1$ and $1$, respectively. When $M$ is a closed submanifold of $\R$,  the first nonzero eigenvalue can be bounded by the mean curvature $\mathbf{H}$ from above.
\begin{thm}[\cites{Rei77, ESI92}]\label{thm-Rei}
	Let	$M$ be an $m$-dimensional closed submanifold in $\R$. Then the first nonzero eigenvalue of Laplacian satisfies
		\begin{equation}\label{eq-Rei}
			\lambda_1\le\frac{m}{V(M)}\int_M(c+|\mathbf{H}|^2).
		\end{equation}
		Moreover, equality in \eqref{eq-Rei} holds if and only if either

		(1) $M$ is minimal in $\mathbb{S}^n$ and the immersion is given by a subspace of the first eigenspace, or

		(2) $M$ is minimal in a geodesic sphere $\Sigma_c$ of $\R$, where the geodesic radius $r_c$ of $\Sigma_c$ is given by
		\begin{equation*}
			r_0=\left(\frac{m}{\lambda_1}\right)^{1/2},\quad r_1=\arcsin r_0,\quad r_{-1}=\arsinh r_0.
		\end{equation*}
\end{thm}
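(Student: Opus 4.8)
The plan is to derive \eqref{eq-Rei} from the Rayleigh characterisation of $\lambda_1$, using the ambient coordinate functions of an isometric model of $\R$ as test functions together with a second-order identity for the position map. I would realise $\R$ as a totally umbilic hypersurface of a flat model space $\mathbb{E}$: take $\mathbb{E}=\mathbb{R}^n$ for $c=0$, $\mathbb{E}=\mathbb{R}^{n+1}$ with $\S=\{\langle X,X\rangle=1\}$ for $c=1$, and the Minkowski space $\mathbb{E}=\mathbb{R}^{n,1}$ with $\H=\{\langle X,X\rangle=-1,\ X_0>0\}$ for $c=-1$, and let $X\colon M\to\mathbb{E}$ be the induced (pseudo-)isometric immersion. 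First I would record, for a local orthonormal frame $e_1,\dots,e_m$ of $M$, the two identities $\sum_\alpha\langle dX(e_\alpha),dX(e_\alpha)\rangle=m$ and $\Delta^M X=m\mathbf{H}-mc\,X$, where $\mathbf{H}$ is the mean curvature of $M$ in $\R$; the second follows from the Gauss formula together with $\langle X,X\rangle=1/c$. Contracting the second identity and using $\langle\mathbf{H},X\rangle=0$ gives the uniform formula
\[
\langle\Delta^M X,\Delta^M X\rangle=m^2\bigl(c+|\mathbf{H}|^2\bigr),
\]
valid in all three cases.

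The spectral step is then transparent in the flat case $c=0$. Translating so that $\int_M X\,dV=0$, each coordinate function $X_A$ is $L^2$-orthogonal to the constants; expanding $X_A$ in an eigenbasis and using $\lambda_k\ge\lambda_1$ gives $\int_M(\Delta^M X_A)^2\ge\lambda_1\int_M|\nabla^M X_A|^2$ for every $A$. Summing over $A$ and inserting the two identities above turns this into $m^2\int_M|\mathbf{H}|^2\ge\lambda_1\,mV(M)$, which is exactly \eqref{eq-Rei} with $c=0$.

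For $c=\pm1$ the same second-order inequality is the goal, but two difficulties appear, and these are where I expect the real work to lie. First, no ambient isometry of $\R$ can force $\int_M X_A\,dV=0$ in general, so I would centre the coordinate functions by precomposing $X$ with a conformal transformation of $\R$ selected by a Hersch-type balancing argument (a degree/continuity argument over the family of such transformations, as in \cite{Her70, LY82}). Since such a transformation is not an isometry, the clean identity above no longer refers to the original mean curvature after balancing, and the crux becomes showing that the energy produced by the balanced map is still controlled by $\int_M(c+|\mathbf{H}|^2)$; this is the content of the El Soufi--Ilias refinement \cite{ESI92}, and I expect it to be the main obstacle. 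Second, for $c=-1$ the positive-definite sums $\sum_A(\nabla^M X_A)^2$ and $\sum_A(\Delta^M X_A)^2$ that the spectral inequality requires differ from the Minkowski contractions that produce the geometric identity, so the argument cannot be summed coordinatewise as in the Euclidean case; I would circumvent this by passing to the conformal ball (Poincar\'e) model of $\H$, where the target is genuinely Euclidean.

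Finally, for the equality discussion I would trace both inequalities. Equality in the Rayleigh step forces each centred $X_A$ to lie in the $\lambda_1$-eigenspace, so the balanced position map satisfies $\Delta^M X=-\lambda_1\bigl(X-\overline{X}\bigr)$; comparing with $\Delta^M X=m\mathbf{H}-mc\,X$ pins down the immersion. Either $M$ is minimal in $\S$ with coordinates spanning the first eigenspace, giving case (1); or the eigenvalue equation confines $X$ to a geodesic sphere $\Sigma_c$ on which $M$ is minimal, its radius $r_c$ being fixed by equating $\lambda_1$ to $m\bigl(c+|\mathbf{H}|^2\bigr)$ for the constant mean curvature of $\Sigma_c$, which yields the stated values of $r_0,r_1,r_{-1}$ and hence case (2).
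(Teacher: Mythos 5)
Your outline is sound for $c=0$ (this is exactly Reilly's original argument, which the paper also invokes), and for $c=1$ you are actually working harder than necessary: after embedding $\mathbb{S}^n\subset\mathbb{R}^{n+1}$ an ordinary Euclidean translation of $\mathbb{R}^{n+1}$ centres the coordinates without changing $\Delta^M X$ or $\mathbf{H}$, so the flat-case argument applies verbatim; this is the reduction the paper describes. The genuine gap is in the case $c=-1$, which is the whole point of the El Soufi--Ilias half of the theorem. You correctly identify the two obstructions (the indefinite signature of the hyperboloid model, and the fact that conformal balancing destroys the identity $\langle\Delta^M X,\Delta^M X\rangle=m^2(c+|\mathbf{H}|^2)$), but you then write that controlling the energy of the balanced map by $\int_M(c+|\mathbf{H}|^2)$ ``is the content of the El Soufi--Ilias refinement, and I expect it to be the main obstacle.'' That sentence is the proof: you have deferred precisely the step that the theorem's hyperbolic case consists of, and passing to the Poincar\'e ball does not by itself resolve it, since in the ball model the position vector is no longer an isometric immersion and the clean Laplacian identity is lost there too.

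The missing ingredient is supplied in this paper by Proposition \ref{prop3.2}. One takes a conformal immersion $\Gamma\colon\R\to\S$ (composed with a Hersch-type balancing conformal diffeomorphism of $\S$ so that the coordinates of $\Phi=\Gamma\circ x$ integrate to zero), uses $\sum_A(\Phi^A)^2=1$ and $\sum_A|\nabla\Phi^A|^2=me^{2\rho}$ to get $\lambda_1 V(M)\le m\int_M e^{2\rho}$ from the Rayleigh quotient, and then invokes the pointwise identity \eqref{eq-prop-3},
\begin{equation*}
e^{2\rho}=\bigl(c+|\mathbf{H}|^2\bigr)-\tfrac{2}{m}\Delta\rho-\tfrac{m-2}{m}|\nabla\rho|^2-\bigl|(\bar{\nabla}\rho)^{\bot}-\mathbf{H}\bigr|^2,
\end{equation*}
whose integral over the closed manifold $M$ kills the divergence term and leaves only nonpositive corrections, yielding $\int_M e^{2\rho}\le\int_M(c+|\mathbf{H}|^2)$. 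This is exactly the inequality you flagged as the obstacle; without deriving it (it follows from the Gauss equations for $x$ and $\Gamma\circ x$ together with the conformal transformation law of the second fundamental form, as in Sect.~\ref{Sect-2}), your argument does not close for $c=-1$. The same identity is also what makes the equality analysis rigorous: equality forces $\rho$ constant and $(\bar{\nabla}\rho)^{\bot}=\mathbf{H}$, which is what actually pins the immersion into a geodesic sphere or into $\S$ minimally; your comparison of $\Delta^M X=-\lambda_1(X-\overline{X})$ with $\Delta^M X=m\mathbf{H}-mcX$ is only literally available in the Euclidean case where no conformal balancing was needed.
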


\begin{rem}
	Case (1) of Theorem \ref{thm-Rei} occurs only when $c=1$. For Case (2) of Theorem \ref{thm-Rei}, we use the following convention:

	When $M$ is a closed hypersurface of $\mathbb{R}^n(c)$ (i.e., $n-m=1$), ``$M$ lies in a geodesic sphere $\Sigma_c$ of $\mathbb{R}^n(c)$'' means ``$M=\Sigma_c$''.
\end{rem}

Theorem \ref{thm-Rei} is usually referred ``Reilly inequality'' as it was first proved by Reilly \cite{Rei77} for $c=0$ by using the coordinate functions as the test functions after translating the center of mass of $M$ to the origin.
By embedding the sphere $\mathbb{S}^n\to \mathbb{R}^{n+1}$, one can reduce the case $c=1$ to the case $c=0$.
But this method fails for $c=-1$.  The case $c=-1$ was proved by El Soufi and Ilias \cite{ESI92} via the technique of conformal transformations.

Kokarev \cite{Kok20} also proved a version of Reilly inequality for higher eigenvalues, that is,
\begin{equation*}
	\lambda_k\le C(n,m)\Bigl(\frac{1}{V(M)}\int_M(c+|\mathbf{H}|^2)\Bigr)k
\end{equation*}
for a closed submanifold $M^m$ of $\R$.
In this paper, we prove
\begin{thm}\label{thm1.1}
	Let $M$ be an $m$-dimensional closed submanifold in $\R$. Then the first $m$ nonzero eigenvalues of the Laplacian satisfy
	\begin{equation}\label{eq_thm1}
		\mathfrak{H}(\la_1,\cdots, \la_{m}) \le \frac{m}{V(M)}\int_M\left(c+|\mathbf{H}|^2\right).
	\end{equation}
	Moreover, equality in \eqref{eq_thm1} holds if and only if either

	(1) $M$ is minimal in $\mathbb{S}^n$ and the immersion is given by a subspace of the first eigenspace, or

	(2) $M$ is minimal in a geodesic sphere $\Sigma_c$ of $\R$, where the geodesic radius $r_c$ of $\Sigma_c$ is given by
	\begin{equation*}
		r_0=\left(\frac{m}{\lambda_1}\right)^{1/2},\quad r_1=\arcsin r_0,\quad r_{-1}=\arsinh r_0.
	\end{equation*}
\end{thm}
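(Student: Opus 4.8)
The plan is to deduce Theorem \ref{thm1.1} from the conformal harmonic-mean bound of Theorem \ref{thm-conf-h} in exactly the way the classical Reilly inequality (Theorem \ref{thm-Rei}) is deduced from Theorem \ref{thm-conf}. The only bridge needed between the two settings is a comparison between the $n$-conformal volume of a distinguished conformal map of $M$ into a sphere and the total mean curvature $\int_M(c+|\mathbf H|^2)$; once this comparison is available, the eigenvalue estimate is immediate from \ref{thm-conf-h}, and the subtlety is concentrated entirely in the equality discussion.

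First I would build, out of the given isometric immersion $x:M\to\mathbb R^n(c)$, a canonical conformal immersion $\Psi:M\to\mathbb S^n$ into a round sphere. For $c=1$ one takes $\Psi=x$ itself, since $M\subset\mathbb S^n$. For $c=0$ one post-composes $x$ with an inverse stereographic projection $\mathbb R^n\hookrightarrow\mathbb S^n\subset\mathbb R^{n+1}$, which is conformal. For $c=-1$ one uses the conformal identification of $\mathbb H^n$ with the Poincar\'e ball and the conformal embedding of that ball as a spherical cap of $\mathbb S^n$, again composing with $x$. In every case $V_{c}(M,n)$ is defined and $V_c(M,n)\le V_c(M,n,\Psi)$, so Theorem \ref{thm-conf-h} applies with this value of $n$.

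The key step is the conformal-volume estimate
\begin{equation*}
\Bigl(\frac{V_c(M,n)}{V(M)}\Bigr)^{2/m}\le\frac1{V(M)}\int_M\bigl(c+|\mathbf H|^2\bigr),
\end{equation*}
which is precisely the inequality El Soufi--Ilias established in order to pass from Theorem \ref{thm-conf} to Theorem \ref{thm-Rei}. I would obtain it by writing, for an arbitrary $\gamma\in\mathrm{Conf}(\mathbb S^n)$, the volume $V(\gamma\circ\Psi(M))=\int_M J_{\gamma\Psi}\,dV_g$ as the integral of the conformal Jacobian of $\gamma\Psi$, bounding this Jacobian through the extrinsic geometry of $x$, and taking the supremum over $\gamma$; the total mean curvature appears exactly because the conformal factor of the canonical sphere map is governed by $c+|\mathbf H|^2$. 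This is the same computation that underlies Theorem \ref{thm-Rei}, so I would simply invoke it. Feeding the displayed bound into Theorem \ref{thm-conf-h} gives $\mathfrak H(\lambda_1,\dots,\lambda_m)\le m\bigl(V_c(M,n)/V(M)\bigr)^{2/m}\le\frac m{V(M)}\int_M(c+|\mathbf H|^2)$, which is \eqref{eq_thm1}.

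I expect the main obstacle to be the equality discussion, which must reconcile two independent sources of rigidity. Equality in \eqref{eq_thm1} forces equality simultaneously in Theorem \ref{thm-conf-h} and in the conformal-volume estimate above. The former yields $\lambda_1=\dots=\lambda_m$ together with condition (C1), a minimal isometric immersion of $M$ into a sphere by first eigenfunctions; the latter is exactly the equality case already analysed for Theorem \ref{thm-Rei}, forcing either case (1), $M$ minimal in $\mathbb S^n$, or case (2), $M$ minimal in a geodesic sphere $\Sigma_c$ with the stated radius $r_c$. I would then check consistency: in both geometric cases $M$ is minimal in a round sphere, so its coordinate functions are first eigenfunctions and $\lambda_1$ has multiplicity at least $m+1$, whence $\lambda_1=\dots=\lambda_m$ holds automatically; conversely (1)--(2) force equality throughout. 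The case $c=-1$ needs the extra remark that a closed $M\subset\mathbb H^n$ satisfies $\int_M(c+|\mathbf H|^2)>0$ (as is forced already by $\lambda_1>0$ in Theorem \ref{thm-Rei}), so that the right-hand side is positive and its $m/2$-th power in the conformal-volume estimate is legitimate.
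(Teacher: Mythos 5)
There is a genuine gap, and it sits exactly at what you call the ``key step.'' You claim the comparison
\begin{equation*}
\Bigl(\tfrac{V_c(M,n)}{V(M)}\Bigr)^{2/m}\le\tfrac1{V(M)}\int_M\bigl(c+|\mathbf H|^2\bigr)
\end{equation*}
and attribute it to El Soufi--Ilias as the bridge from Theorem \ref{thm-conf} to Theorem \ref{thm-Rei}. That is not how Theorem \ref{thm-Rei} is proved, and your sketch of the Jacobian computation does not deliver this inequality for $m\ge3$. The volume $V(\gamma\circ\Psi(M))=\int_M e^{m\rho_\gamma}\,dv_g$ is the $L^{m/2}$-type quantity in the conformal factor $e^{2\rho_\gamma}$, whereas what the extrinsic geometry controls (Proposition \ref{prop3.2}, after integrating away $\Delta\rho$ and discarding the nonpositive terms) is only $\int_M e^{2\rho_\gamma}\le\int_M(c+|\mathbf H|^2)$. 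H\"older runs in the wrong direction here: it gives $\int_M e^{2\rho}\le\bigl(\int_M e^{m\rho}\bigr)^{2/m}V(M)^{(m-2)/m}$, i.e.\ a \emph{lower} bound on the conformal volume in terms of $\int e^{2\rho}$, so no upper bound on $V_c(M,n,\Psi)$ by $\int_M(c+|\mathbf H|^2)$ follows. What the conformal-volume route actually yields for $m\ge3$ is the weaker estimate with $\bigl(\int_M(c+|\mathbf H|^2)^{m/2}\bigr)^{2/m}$ in place of $V(M)^{(2-m)/m}\int_M(c+|\mathbf H|^2)$ (this is El Soufi--Ilias's Willmore-type bound on the conformal volume), and by the power-mean inequality that is larger than the right-hand side of \eqref{eq_thm1}. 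Only for $m=2$, where $e^{m\rho}=e^{2\rho}$, does your chain close up.

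The paper's proof therefore does \emph{not} factor through $V_c(M,n)$: it runs the same test-function chain as in Sect.~\ref{Sect3.1} with $\Phi=\Gamma\circ x$ only up to the intermediate estimate $V(M)\le\sum_{A=1}^m\lambda_A^{-1}\int_M e^{2\rho}$ (inequality \eqref{eq3.8}), deliberately omits the H\"older step that would produce $V(M,\tilde g)^{2/m}$, and instead integrates the pointwise identity \eqref{eq-prop-3} to get $\int_M e^{2\rho}\le\int_M(c+|\mathbf H|^2)$ directly. You should restructure your argument the same way; as written, the inequality \eqref{eq_thm1} is not established for $m\ge3$. A secondary consequence is that your equality discussion (equality in Theorem \ref{thm-conf-h} plus equality in the conformal-volume comparison) rests on the same unproved step; in the paper the equality analysis is instead carried out on the chain \eqref{eq3.8} itself (giving $\lambda_1=\cdots=\lambda_m$ as in Lemma \ref{lem-3.4}) and then reduced to the known equality case of Theorem \ref{thm-Rei}.
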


\subsection{Extrinsic estimates for submanifolds of the Euclidean space}\label{Sect1.4}
When $M$ is a submanifold of $\mathbb{R}^n$, we can directly use the coordinate functions as the test functions without conformal maps.
This is just the original idea in \cite{Rei77}, which allows us to get more general extrinsic upper bounds, including both closed eigenvalues (i.e., eigenvalues of the Laplacian for closed manifolds) and Steklov eigenvalues.
In fact, the original Reilly's inequalities in \cite{Rei77} involve not only the mean curvature, but also the higher order mean curvatures.
By this approach, we can prove the following results. First, considering the eigenvalues of the Laplacian, we have
\begin{thm}\label{thm-ext-c}
	Let	$M$ be an $m$-dimensional closed submanifold of $\mathbb{R}^n$.
	Let $T$ be a symmetric, divergence-free $(1, 1)$-tensors on $M$.
	Then the first $m$ nonzero eigenvalues of the Laplacian satisfy
	\begin{equation}\label{eq-ext-c}
		\mathfrak{H}(\lambda_1,\cdots,\lambda_m) \Bigl(\int_M\tr T \Bigr)^2 \le m V(M)\int_M |H_T|^2.
	\end{equation}
	Assume that $H_T\not\equiv 0$, then equality holds if and only if the following conditions hold simultaneously:

	(1) $\tr T$ is constant;

	(2) $M$ is minimal and $T$-minimal in a hypersphere of $\mathbb{R}^n$ with the radius $\sqrt{m/\lambda_1}$.
\end{thm}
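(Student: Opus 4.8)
The plan is to run Reilly's original method in $\mathbb{R}^n$, using the position map together with the divergence-free tensor $T$, and to recover the harmonic mean by an orthogonal change of the ambient coordinates that spreads the coordinate functions over the first $m$ eigenfunctions. Let $X=(x^1,\dots,x^n)$ be the position vector of the immersion, translated so that $\int_M x^\alpha=0$ for every $\alpha$ (a translation changes neither $V(M)$, $\tr T$, $|H_T|$, nor the $\lambda_i$). Since $T$ is symmetric and divergence-free, the coordinate functions satisfy $\dive(T\nabla x^\alpha)=\langle H_T,e_\alpha\rangle$, where $\{e_\alpha\}$ is the standard basis of $\mathbb{R}^n$; combining this with the pointwise identity $\tr T=\sum_\alpha\langle\nabla x^\alpha,T\nabla x^\alpha\rangle$ and integrating by parts I obtain the global identity
\[
\int_M\tr T=-\int_M\langle X,H_T\rangle=-\sum_\alpha\int_M x^\alpha\,\langle H_T,e_\alpha\rangle .
\]
I will also record the two elementary facts $\sum_\alpha|\nabla x^\alpha|^2=m$ and $|\nabla x^\alpha|\le 1$ pointwise, so that $q_\alpha:=\int_M|\nabla x^\alpha|^2$ satisfies $\sum_\alpha q_\alpha=mV(M)$ and $q_\alpha\le V(M)$.

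Next I fix the gauge. Let $u_1,\dots,u_m$ be the first $m$ eigenfunctions. Applying the QR factorization to the $n\times m$ matrix $\bigl[\int_M x^\alpha u_i\bigr]$ produces $R\in O(n)$ (an ambient rotation, hence harmless for both sides of \eqref{eq-ext-c}) such that, after replacing $X$ by $RX$, one has $\int_M x^\alpha u_i=0$ for all $\alpha>i$. Consequently $x^\alpha$ is orthogonal to the constants and to $u_1,\dots,u_{\alpha-1}$ when $\alpha\le m$, and orthogonal to the constants and to $u_1,\dots,u_m$ when $\alpha>m$. The variational characterization of the eigenvalues then gives $\int_M (x^\alpha)^2\le q_\alpha/\lambda_\alpha$ for $\alpha\le m$, and $\int_M (x^\alpha)^2\le q_\alpha/\lambda_{m+1}\le q_\alpha/\lambda_m$ for $\alpha>m$.

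The crux is to assemble these into the harmonic mean. Setting $\mu_\alpha=1/\lambda_\alpha$ for $\alpha\le m$ and $\mu_\alpha=1/\lambda_m$ for $\alpha>m$, a Cauchy--Schwarz applied to each integral $\int_M x^\alpha\langle H_T,e_\alpha\rangle$ and then a Cauchy--Schwarz over $\alpha$ (using $\sum_\alpha\int_M\langle H_T,e_\alpha\rangle^2=\int_M|H_T|^2$) yield
\[
\Bigl(\int_M\tr T\Bigr)^2\le\Bigl(\sum_\alpha\mu_\alpha q_\alpha\Bigr)\int_M|H_T|^2 .
\]
It then remains to prove $\sum_\alpha\mu_\alpha q_\alpha\le V(M)\sum_{i=1}^m 1/\lambda_i$, which after substituting $\sum_{\alpha>m}q_\alpha=mV(M)-\sum_{\alpha\le m}q_\alpha$ reduces to $\sum_{\alpha=1}^m (V(M)-q_\alpha)(1/\lambda_\alpha-1/\lambda_m)\ge 0$; this is immediate from $q_\alpha\le V(M)$ and $\lambda_\alpha\le\lambda_m$. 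Substituting $\sum_i 1/\lambda_i=m/\mathfrak{H}(\lambda_1,\dots,\lambda_m)$ gives exactly \eqref{eq-ext-c}. I want to stress that the per-direction Cauchy--Schwarz is essential here: the naive global estimate $(\int_M\tr T)^2\le\int_M|X|^2\int_M|H_T|^2$ only controls $\int_M|X|^2$ by $\lambda_1$ alone and is strictly too weak for the harmonic mean; it is the rotation $R$ that prevents all of $X$ from concentrating on $u_1$.

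Finally I would analyze equality by tracing back every inequality. Because no non-constant function on a closed manifold has $|\nabla f|\equiv 1$, one has $q_\alpha<V(M)$ for each non-constant $x^\alpha$; hence the combinatorial step is strict unless $1/\lambda_\alpha=1/\lambda_m$ for all $\alpha\le m$, forcing $\lambda_1=\dots=\lambda_m$. Equality in the two Cauchy--Schwarz steps then forces $\langle H_T,e_\alpha\rangle$ to be a fixed multiple of $x^\alpha$ for every $\alpha$, while equality in the variational bounds forces each $x^\alpha$ to be a first eigenfunction; from $\dive(T\nabla x^\alpha)\propto x^\alpha$ together with the divergence-free condition one reads off that $\tr T$ is constant and that $X$ is a minimal immersion into the hypersphere of radius $\sqrt{m/\lambda_1}$ on which $T$ is $T$-minimal, i.e.\ conditions (1)--(2). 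I expect this rigidity analysis---propagating pointwise equality through the divergence-free structure---to be the main obstacle, considerably more delicate than the inequality itself; the other points requiring care are the identity $\dive(T\nabla x^\alpha)=\langle H_T,e_\alpha\rangle$ and the verification that the QR gauge-fixing interacts correctly with the centering.
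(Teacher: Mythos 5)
Your proposal is correct and is essentially the paper's own proof: the same centering plus QR gauge-fixing, the same per-coordinate variational bounds $\int_M(x^A)^2\le q_A/\lambda_A$, the same regrouping of the indices $A\le m$ versus $A>m$ (which is exactly the paper's inequality \eqref{eq-5.34}, $\mathfrak{H}(\lambda_1,\dots,\lambda_m)\int_M|x|^2\le mV(M)$), and the same use of the generalized Hsiung--Minkowski formula \eqref{eq-HM} followed by Cauchy--Schwarz. The only difference is cosmetic: you apply Cauchy--Schwarz direction-by-direction and then sum over $\alpha$, whereas the paper sums first and applies a single global Cauchy--Schwarz to $\int_M\langle x,H_T\rangle$ (steps \eqref{eq-6.2}--\eqref{eq-6.3}); both routes pass through the identical combinatorial estimate $\sum_\alpha\mu_\alpha q_\alpha\le V(M)\sum_{i\le m}\lambda_i^{-1}$. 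One remark of yours is mistaken and worth correcting: the ``naive global estimate'' $(\int_M\tr T)^2\le\int_M|x|^2\int_M|H_T|^2$ is \emph{not} too weak --- it is precisely what the paper uses, and it suffices because after the QR rotation $\int_M|x|^2$ is controlled by $V(M)\sum_{i=1}^m\lambda_i^{-1}$ rather than by $\lambda_1$ alone; the rotation, not the per-direction splitting of Cauchy--Schwarz, is what captures the harmonic mean. Your equality discussion is only sketched where the paper completes it: from $\lambda_1=\dots=\lambda_m$ and $\Delta x=-\lambda_1x$ one invokes Takahashi's theorem to place $M$ minimally in the hypersphere of radius $\sqrt{m/\lambda_1}$, equality in the Cauchy--Schwarz steps gives $H_T=kx$ for a single constant $k$ (whence $\tr T=-\langle x,H_T\rangle=-kr^2$ is constant), and decomposing $H_T$ along $x/r$ and the normal directions to the hypersphere yields $T$-minimality; these steps are short but should be written out, in particular the upgrade from ``$\langle H_T,e_\alpha\rangle$ proportional to $x^\alpha$ for each $\alpha$'' to a single $\alpha$-independent constant of proportionality.
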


By embedding the sphere $\mathbb{S}^n\to \mathbb{R}^{n+1}$, we have
\begin{thm}\label{thm-ext-c-sphere}
	Let	$M$ be an $m$-dimensional closed submanifold of $\mathbb{S}^n$.
	Let $T$ be a symmetric, divergence-free $(1, 1)$-tensors on $M$.
	Then the first $m$ nonzero eigenvalues of the Laplacian satisfy
	\begin{equation}\label{eq-ext-c-sphere}
		\mathfrak{H}(\lambda_1,\cdots,\lambda_m) \Bigl(\int_M\tr T \Bigr)^2 \le m V(M)\int_M \big(|H_T|^2+(\tr T)^2\big).
	\end{equation}
	Assume that $H_T\not\equiv 0$, then equality holds if and only if the following conditions hold simultaneously:

	(1)  $\tr T$ are constant;

	(2) either $M$ is minimal and $T$-minimal in $\mathbb{S}^n$ and the immersion is given by a subspace of the first eigenfunctions;
	or $M$ is minimal and $T$-minimal in a geodesic sphere of $\mathbb{S}^n$ with the geodesic radius $\arcsin (\sqrt{m/\lambda_1})$;
\end{thm}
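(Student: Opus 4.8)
The plan is to reduce the statement to Theorem~\ref{thm-ext-c} by viewing $M$ as a closed submanifold of $\mathbb{R}^{n+1}$ through the standard isometric embedding of $\mathbb{S}^n$ as the unit sphere. The point is that every ingredient of \eqref{eq-ext-c-sphere} other than the $T$-mean curvature — the induced metric on $M$, the eigenvalues $\lambda_i$, the volume $V(M)$, the function $\tr T$, and the conditions that $T$ be symmetric and divergence-free — is intrinsic to $M$ and hence insensitive to whether we regard $M$ inside $\mathbb{S}^n$ or inside $\mathbb{R}^{n+1}$. In particular $T$ remains an admissible tensor for Theorem~\ref{thm-ext-c} applied with ambient space $\mathbb{R}^{n+1}$ (that is, with $n$ replaced by $n+1$).

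The only geometric input needed is a pointwise comparison of the $T$-mean curvatures in the two ambient spaces. Writing $x$ for the position vector, which is the outward unit normal of $\mathbb{S}^n$ in $\mathbb{R}^{n+1}$ (so $|x|=1$ and $x\perp T\mathbb{S}^n$), the composition formula for second fundamental forms of $M\subset\mathbb{S}^n\subset\mathbb{R}^{n+1}$ reads $\mathrm{II}^{\mathbb{R}^{n+1}}(X,Y)=\mathrm{II}^{\mathbb{S}^n}(X,Y)-\langle X,Y\rangle x$ for $X,Y$ tangent to $M$. Contracting against $T$ gives $H_T^{\mathbb{R}^{n+1}}=H_T^{\mathbb{S}^n}-(\tr T)\,x$, and since $H_T^{\mathbb{S}^n}$ is tangent to $\mathbb{S}^n$ whereas $x$ is normal to it, these two terms are orthogonal; hence $|H_T^{\mathbb{R}^{n+1}}|^2=|H_T^{\mathbb{S}^n}|^2+(\tr T)^2$. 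Feeding this identity into \eqref{eq-ext-c} for $M\subset\mathbb{R}^{n+1}$ turns its right-hand side into exactly the right-hand side of \eqref{eq-ext-c-sphere}, which proves the inequality.

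For equality I would exploit that, since the two integrands agree pointwise, equality in \eqref{eq-ext-c-sphere} holds iff equality holds in \eqref{eq-ext-c} for $M\subset\mathbb{R}^{n+1}$; moreover $H_T^{\mathbb{S}^n}\not\equiv0$ forces $H_T^{\mathbb{R}^{n+1}}\not\equiv0$, so the equality statement of Theorem~\ref{thm-ext-c} applies and yields that $\tr T$ is constant (condition (1)) and that $M$ is minimal and $T$-minimal in some hypersphere $S=S^n(p,r_0)\subset\mathbb{R}^{n+1}$ of radius $r_0=\sqrt{m/\lambda_1}$. If $p=0$ then $S=\mathbb{S}^n$, forcing $r_0=1$, i.e. $\lambda_1=m$, and minimality in $\mathbb{S}^n$ with $\lambda_1=m$ gives, by Takahashi's theorem, that the immersion is by first eigenfunctions — the first alternative. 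If $p\neq0$ then $M\subset \Sigma:=\mathbb{S}^n\cap S$, which is a geodesic sphere of $\mathbb{S}^n$ of geodesic radius $\arcsin r_0=\arcsin\sqrt{m/\lambda_1}$.

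The delicate step, which I expect to be the main obstacle, is to promote ``$M$ minimal and $T$-minimal in $S$'' to ``$M$ minimal and $T$-minimal in $\Sigma$''. The idea is to split the mean curvature of $M$ in $S$ into its part tangent to $\Sigma$ (the mean curvature of $M$ inside $\Sigma$) and its part normal to $\Sigma$ (the umbilic contribution $m\kappa\nu$ of the umbilic hypersurface $\Sigma\subset S$). These parts are orthogonal, so the vanishing of the total mean curvature forces both to vanish: $\kappa=0$, i.e. $\Sigma$ is totally geodesic in $S$ (equivalently $|p|^2=1-r_0^2$), and simultaneously $M$ is minimal in $\Sigma$. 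Running the same splitting for $H_T$, whose umbilic contribution is $\kappa(\tr T)\nu$ and therefore vanishes once $\kappa=0$, shows $M$ is $T$-minimal in $\Sigma$; this is the second alternative. The converse implications are the same computations in reverse, using that a totally geodesic subsphere transmits both minimality and $T$-minimality and that $\sin^2\rho=m/\lambda_1$ matches the radii. All remaining verifications are routine, so the whole difficulty is concentrated in this transfer across the umbilic-versus-totally-geodesic dichotomy.
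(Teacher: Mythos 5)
Your proposal is correct and follows essentially the same route as the paper: embed $\mathbb{S}^n\subset\mathbb{R}^{n+1}$, use the orthogonal decomposition $H_T'=H_T-(\tr T)\,x$ with $|H_T'|^2=|H_T|^2+(\tr T)^2$ to reduce to Theorem~\ref{thm-ext-c}, and split the equality case according to whether the resulting Euclidean hypersphere is $\mathbb{S}^n$ itself or meets it in a geodesic sphere. The only difference is that your umbilic-splitting argument explicitly justifies the transfer of minimality and $T$-minimality from the hypersphere $S$ to $\Sigma=S\cap\mathbb{S}^n$ (forcing $\Sigma$ to be a great sphere of $S$ and pinning down the geodesic radius), a step the paper asserts with only a reference to the Gauss equation.
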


We will introduce the definitions of $H_T$ and $T$-minimality in Sect.~\ref{Sect:2.3}. The above theorems imply \cite{Gro02}*{Proposition 2.1}.
If we take $T=T_r$, the Newton transformation of the second fundamental form, then $H_T$ and $\tr T$ are corresponding to the $(r+1)$-th and $r$-th mean curvature, respectively.
Hence, we immediately conclude
\begin{cor}\label{cor_Rei}
	Let	$M$ be an $m$-dimensional closed submanifold of $\mathbb{R}^n(c), c=0, 1$.

(1) If $n>m+1$, then for each even $r\in \{0,\cdots m-1\}$, we have
\begin{equation*}
\mathfrak{H}(\lambda_1,\cdots,\lambda_m)\Big(\int_M H_{r} \Big)^2\leq mV(M)\int_M(|\mathbf{H}_{r+1}|^2+cH_r^2).
\end{equation*}

(2) If $n=m+1$, then for each $r\in \{0,\cdots m-1\}$, we have
\begin{equation*}
\mathfrak{H}(\lambda_1,\cdots,\lambda_m)\Big(\int_M H_{r} \Big)^2\leq mV(M)\int_M(H_{r+1}^2+cH_r^2).
\end{equation*}
Here $\mathbf{H}_{r+1}$ is the $(r+1)$-th mean curvature vector of $M$.
\end{cor}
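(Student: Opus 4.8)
The plan is to obtain Corollary \ref{cor_Rei} by specializing Theorem \ref{thm-ext-c} (for $c=0$, so $M\subset\mathbb{R}^n$) and Theorem \ref{thm-ext-c-sphere} (for $c=1$, so $M\subset\mathbb{S}^n$) to the $r$-th Newton transformation $T=T_r$ of the shape operator. First I would recall the construction of $T_r$: in the hypersurface case $n=m+1$ it is given recursively by $T_0=\mathrm{Id}$, $T_r=S_r\,\mathrm{Id}-A\circ T_{r-1}$, where $A$ is the shape operator and $S_r$ the $r$-th elementary symmetric function of its principal curvatures; in the higher-codimension case $n>m+1$ one instead builds the analogous symmetric $(1,1)$-tensor from the intrinsic curvature tensor through the Gauss equation, which is possible only for even $r$ (there is no canonical scalar-valued Lipschitz--Killing curvature for odd $r$ in higher codimension). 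This is exactly the source of the parity restriction in part (1).

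The essential step is to verify that $T_r$ meets the hypotheses of Theorems \ref{thm-ext-c} and \ref{thm-ext-c-sphere}, i.e.\ that it is symmetric and divergence-free. Symmetry is immediate from the definition. The vanishing $\dive T_r=0$ is where the restriction to $c\in\{0,1\}$ (a space form) is genuinely used: differentiating the recursion and applying the Codazzi equation, the ambient curvature enters $\dive T_r$ only through terms that vanish identically when the sectional curvature is constant. This is the classical identity underlying Reilly-type formulae (cf.\ \cites{Rei77,Gro02}), and I expect it to be the main technical point to justify carefully, particularly in the higher-codimension realization of $T_r$.

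Next I would record the two algebraic identities that convert the tensorial data into mean curvatures,
\[
\tr T_r=(m-r)S_r=c_r H_r,\qquad H_{T_r}=c_r\,\mathbf{H}_{r+1},\qquad c_r:=(m-r)\binom{m}{r}=(r+1)\binom{m}{r+1},
\]
where the second equality uses the standard identity $\tr(A\,T_r)=(r+1)S_{r+1}$ together with the normalization $H_r=S_r/\binom{m}{r}$. The crucial arithmetic fact is that the two expressions for $c_r$ coincide, so the normalization constants on the two sides match. Substituting $T=T_r$ into \eqref{eq-ext-c} (when $c=0$) and into \eqref{eq-ext-c-sphere} (when $c=1$, the extra summand contributing $(\tr T_r)^2=c_r^2H_r^2$) gives in both cases
\[
c_r^2\,\mathfrak{H}(\lambda_1,\cdots,\lambda_m)\Bigl(\int_M H_r\Bigr)^2\le c_r^2\,mV(M)\int_M\bigl(|\mathbf{H}_{r+1}|^2+c\,H_r^2\bigr),
\]
and dividing by $c_r^2$ yields the asserted inequality.

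Finally I would separate the two cases in the statement. For $n=m+1$ the second fundamental form is scalar-valued, so $|\mathbf{H}_{r+1}|^2=H_{r+1}^2$ and every $r\in\{0,\dots,m-1\}$ is admissible, which is part (2); for $n>m+1$ only even $r$ give a divergence-free $T_r$ as explained above, which is part (1). The bulk of the work is thus concentrated in the divergence-free verification of Step~2, while the remaining steps are routine bookkeeping with the symmetric functions $S_r$ and the mean curvatures $H_r$.
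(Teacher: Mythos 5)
Your proposal is correct and follows exactly the route the paper takes: the paper simply remarks that substituting the Newton transformation $T=T_r$ into Theorems \ref{thm-ext-c} and \ref{thm-ext-c-sphere} identifies $\tr T_r$ and $H_{T_r}$ with the $r$-th and $(r+1)$-th mean curvatures and "immediately" yields the corollary. You have merely made explicit the standard facts the paper leaves implicit (symmetry and divergence-freeness of $T_r$ in a space form, the identities $\tr T_r=(m-r)S_r$ and $\tr(AT_r)=(r+1)S_{r+1}$, the matching normalization $(m-r)\binom{m}{r}=(r+1)\binom{m}{r+1}$, and the parity restriction in higher codimension), all of which are accurate.
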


Corollary \ref{cor_Rei} implies \cite{Rei77}*{Theorem A} and \cite{Gro02}*{Theorem 2.1}. In particular, we again obtain Theorem \ref{thm1.1} for $c=0,1$ when taking $r=0$.

For the Steklov eigenvalues, we prove the following theorem, which recovers \cite{IM11}*{Theorem 1.2}.
\begin{thm}\label{thm-ext-s}
	Let	$M$ be an $m$-dimensional compact submanifold with non-empty boundary $\partial M$ of $\mathbb{R}^n$.
	Let $T$ be a symmetric, divergence-free $(1, 1)$-tensors on $\partial M$.
	Then the first $m$ nonzero Steklov eigenvalues satisfy
	\begin{equation}\label{eq-ext-s}
		\mathfrak{H}(\sigma_1,\cdots,\sigma_m) \Bigl(\int_{\partial M}\tr T \Bigr)^2 \le m V(M)\int_{\partial M}|H_T|^2.
	\end{equation}

	In particular, when $T=I$, we obtain
	\begin{equation*}
		\mathfrak{H}(\sigma_1,\cdots,\sigma_m)  \le \frac{m V(M)}{\bigl(V(\partial M)\bigr)^2}\int_{\partial M}|\mathbf{H}^{\partial M}|^2,
	\end{equation*}
	where $\mathbf{H}^{\partial M}$ denotes the mean curvature of $\partial M$ in $\mathbb{R}^n$.

	Assume that $H_T\not\equiv 0$, then we have

	(1) if $n>m$, then equality holds if and only if $x$ immerses $M$ minimally in $B^n(\frac{1}{\sigma_1})$ with $x(\partial M)\subset \partial B^n(\frac{1}{\sigma_1})$ such that $x(M)$ meets $\partial B^n(\frac{1}{\sigma_1})$ orthogonally along $x(\partial M)$ and $\partial M$ is $T$-minimal in $\partial B^n(\frac{1}{\sigma_1})$.

	(2) if $n=m$ and $M$ is a bounded domain of $\mathbb{R}^n$, then equality holds if and only if $M$ is a round ball and $tr T$ is constant.
\end{thm}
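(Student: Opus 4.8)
The plan is to adapt the proof of the closed analogue, Theorem \ref{thm-ext-c}, replacing the Laplace eigenvalue problem by the Steklov problem and letting the divergence-free tensor $T$ live on the closed manifold $\partial M$. Write $x=(x_1,\dots,x_n):M\to\mathbb{R}^n$ for the isometric immersion and let $H_T$ denote the $T$-mean curvature vector of $\partial M$ in $\mathbb{R}^n$. After a translation of $\mathbb{R}^n$ I may assume $\int_{\partial M}x_\alpha=0$ for every $\alpha$, so that each $x_\alpha|_{\partial M}$ is $L^2(\partial M)$-orthogonal to the constant Steklov eigenfunction $\psi_0$; translations are isometries of $\mathbb{R}^n$ and leave $H_T$, $\tr T$ and $V(M)$ unchanged. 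Two structural identities drive the argument. First, the \emph{rank identity} for the solid gradient, $\sum_{\alpha=1}^n|\nabla x_\alpha|^2=\dim M=m$ pointwise on $M$, whence $\sum_\alpha\int_M|\nabla x_\alpha|^2=mV(M)$ — this is the source of the factor $mV(M)$. Second, a \emph{divergence identity} on $\partial M$: since $\dive T=0$, one has $\dive_{\partial M}(T\nabla^{\partial M}x_\alpha)=\langle H_T,e_\alpha\rangle$, and integrating this against $x_\alpha$ and summing over $\alpha$ gives $\int_{\partial M}\langle H_T,x\rangle=-\int_{\partial M}\sum_\alpha\langle T\nabla^{\partial M}x_\alpha,\nabla^{\partial M}x_\alpha\rangle=-\int_{\partial M}\tr T$, using $\sum_\alpha\nabla^{\partial M}x_\alpha\otimes\nabla^{\partial M}x_\alpha=g_{\partial M}$.

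The heart of the matter is the harmonic-mean balancing that isolates the first $m$ Steklov eigenvalues. Let $\psi_0,\psi_1,\dots$ be an $L^2(\partial M)$-orthonormal basis of Steklov eigenfunctions. I form the $n\times m$ matrix $C_{\alpha l}=\int_{\partial M}x_\alpha\psi_l$ $(1\le l\le m)$ and apply the rotation $A\in O(n)$ coming from its $QR$ factorization, so that the rotated coordinates $\tilde\phi_l:=\sum_\alpha A_{l\alpha}x_\alpha$ satisfy the triangular orthogonality $\int_{\partial M}\tilde\phi_l\psi_j=0$ for $0\le j<l\le m$. Such a rotation is admissible because each of $mV(M)$, $\int_{\partial M}\tr T$ and $\int_{\partial M}|H_T|^2$ is invariant under $x\mapsto Ax$, and because $H_T$ is $\mathbb{R}^n$-linear in $x$, so its rotated components are $\widetilde{(H_T)}_l=\dive_{\partial M}(T\nabla^{\partial M}\tilde\phi_l)$. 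As $\tilde\phi_l$ is then orthogonal to the first $l$ Steklov eigenfunctions, the variational characterization of $\sigma_l$ together with Cauchy--Schwarz on $\partial M$ gives, for $1\le l\le m$,
\begin{equation*}
\sigma_l\int_{\partial M}\tilde\phi_l^2\le\int_M|\nabla\tilde\phi_l|^2,\qquad\Bigl(\int_{\partial M}\widetilde{(H_T)}_l\,\tilde\phi_l\Bigr)^2\le\Bigl(\int_{\partial M}\widetilde{(H_T)}_l^2\Bigr)\Bigl(\int_{\partial M}\tilde\phi_l^2\Bigr).
\end{equation*}

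I would then combine these $l$-wise estimates: inverting the first inequality gives $1/\sigma_l\ge\int_{\partial M}\tilde\phi_l^2\big/\int_M|\nabla\tilde\phi_l|^2$, and feeding in the divergence identity through the second bound converts $\sum_{l=1}^m 1/\sigma_l$ into a lower bound of the form $\bigl(\int_{\partial M}\tr T\bigr)^2\big/\bigl(V(M)\int_{\partial M}|H_T|^2\bigr)$; rearranging and using $\mathfrak{H}(\sigma_1,\dots,\sigma_m)=m\bigl(\sum_{l=1}^m1/\sigma_l\bigr)^{-1}$ is exactly \eqref{eq-ext-s}, and the case $T=I$ follows from $\tr I=m-1$ and $H_I=(m-1)\mathbf{H}^{\partial M}$. \textbf{The main obstacle} is precisely this combination: the rotation controls only the \emph{first} $m$ rotated functions, whereas the global quantities $\int_{\partial M}\tr T$ and $\int_{\partial M}|H_T|^2$ involve all $n$ of them, so the $n>m$ surplus directions must be shown not to spoil the estimate. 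This is the step in which the \emph{pointwise} rank identity $\sum_\alpha|\nabla x_\alpha|^2=m$ (rather than merely its integral) is indispensable, and it is the genuine extension of Hersch's single-eigenvalue balancing to the harmonic mean; I expect to carry it out exactly as in the proof of Theorem \ref{thm-ext-c}.

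Finally, for the equality discussion I would trace back both inequalities. Equality in the Cauchy--Schwarz step forces $\widetilde{(H_T)}_l$ to be proportional to $\tilde\phi_l$, and equality in the variational bound forces each $\tilde\phi_l$ to be a genuine $\sigma_l$-eigenfunction with $\sigma_1=\dots=\sigma_m$; harmonicity of the resulting coordinate functions on $M$ then makes $x$ a free-boundary minimal immersion. When $n>m$ this identifies $M$ as a minimal submanifold of $B^n(1/\sigma_1)$ meeting $\partial B^n(1/\sigma_1)$ orthogonally along $\partial M$, with $\partial M$ being $T$-minimal in $\partial B^n(1/\sigma_1)$, giving case (1); when $n=m$ and $M$ is a domain, these conditions collapse to $M$ being a round ball with $\tr T$ constant, giving case (2). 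I expect the identification of the radius $1/\sigma_1$ to come from the divergence identity evaluated at equality, as in Theorem \ref{thm-FS} and \cite{FS11}.
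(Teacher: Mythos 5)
Your proposal is correct and follows essentially the same route as the paper: translate and QR-rotate the coordinates, prove $\mathfrak{H}(\sigma_1,\dots,\sigma_m)\int_{\partial M}|x|^2\le mV(M)$ by the pointwise identity $\sum_{A}|\nabla x^A|^2=m$ exactly as in Theorem \ref{thm-ext-c}, combine with the Hsiung--Minkowski formula on the closed manifold $\partial M$ and Cauchy--Schwarz, and trace back the equalities to a free-boundary minimal immersion in $B^n(\frac{1}{\sigma_1})$ with $\partial M$ being $T$-minimal in the boundary sphere. The only presentational differences are that the paper applies Cauchy--Schwarz globally to $\int_{\partial M}\langle x,H_T\rangle$ rather than componentwise (which makes the equality conclusion $H_T=kx$ with a single constant immediate) and handles $n=m$ separately via Bandle's max-characterization of $\sum_{i}\sigma_i^{-1}$, although your balancing argument also covers that case.
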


\medskip
This paper is organized as follows.
In Sect.~\ref{Sect-2} we recall some formulae about conformal maps and submanifolds; most notations and concepts involved in this article are introduced as well.
Then we give the proofs of Theorems \ref{thm-conf-h} and \ref{thm-FS-h} in Sect.~\ref{Sect-3} and Sect.~ \ref{Sect-4}, respectively.
In Sect.~\ref{Sect-5} we prove the Reilly-type inequality Theorem \ref{thm1.1}; a general version Theorem \ref{thm-3.6} is also obtained.
In the last section, we give the proofs of results in Sect.~\ref{Sect1.4}.
The choice of test functions is analogous in these proofs, but discussions on equalities are different.
It is worth pointing out that, unlike the estimates for the first eigenvalues, we need more careful analysis when studying the equality cases (cf. Sect.~\ref{Sect3.2} and Sect.~\ref{Sect5.2}).

\section{Preliminaries}\label{Sect-2}
In this section, we give the relations between some geometric quantities of $M^m$ as a submanifold of  $(N^{n}, g_N)$ and their corresponding quantities of $M$ as a submanifold in $(N^{n}, \tilde{g}_N)$, where $\tilde{g}_N$ is conformal to $g_N$.
Although the relations are well-known in the literature (cf. \cite{Che73a}), we provide a brief proof of these relations for the reader's convenience.
We also recall some basic formulae for submanifolds of space forms.

We use the following convention on the ranges of indices except special declaration:
\begin{equation*}
1\leq i, j, k, \ldots \leq m;\quad
m+1\leq\alpha, \beta, \gamma, \ldots \leq n;\quad
1\leq A, B, C, \ldots \leq n.
\end{equation*}

\subsection{Conformal relations: the ambient manifolds}
Let $(N, g_N)$ be an $n$-dimensional manifold equipped with a Riemannian metric $g_N$.
Let $\{e_A\}_{A=1}^{n}$ and $\{\omega_A\}_{A=1}^{n}$ be a local orthonormal frame and the dual coframe of $(N,g_N)$, respectively.
Then the structure equations of $(N,g_N)$ are (cf. \cite{Chern1968}):
\begin{equation}
\left\{\begin{aligned}\label{eq21}
& d\omega_{A}=\sum_{B}\omega_{AB}\wedge\omega_{B}, \\
&\omega_{AB}+\omega_{BA}=0,
\end{aligned}\right.
\end{equation}
where $\{\omega_{AB}\}$ are the connection $1$-forms of $(N,g_N)$.

Let $\tg_N$ be a metric on $N$ conformal to $g_N$. We assume $\tg_N=e^{2\rho}g_N$ for some  $\rho\in C^{\infty}(N)$.
Then $\{\et_A=e^{-\rho}e_A\}$ is a local orthonormal frame of $(N,\tg_N)$, and $\{\tom_A=e^{\rho}\omega_A\}$ is the dual coframe of  $\{\et_A\}$. The structure equations of $(N,\tg_N)$ are given by
\begin{equation}
\left\{\begin{aligned}\label{eq25}
&d\tom_{A}=\sum\limits_{B}\tom_{AB}\wedge\tom_{B},\\
&\tom_{AB}+\tom_{BA}=0,
\end{aligned}\right.
\end{equation}
where $\{\tom_{AB}\}$ are the connection $1$-forms of $(N,\tg_N)$.

From \eqref{eq21} and \eqref{eq25}, we can solve out
\begin{equation}\label{eq26}
\tom_{AB}=\omega_{AB}+\rho_A\omega_B-\rho_B\omega_A,
\end{equation}
where $\rho_A$ denotes the covariant derivative of $\rho$ with respect to $e_A$.

Let $\bar{\nabla}$ (or $\tilde{\nabla}$ resp.) denote Levi-Civita connection on $N$ with respect to $g_N$ (or $\tg_N$ resp.).
In general, for any $F\in C^{\infty}(N)$,
\begin{equation*}
	\sum_{A=1}^{n}\tn_{A}F\;\tom_A=\sum_{A=1}^{n}\tilde{F}_A\tom_A=dF=\sum_{A=1}^{n}F_A\omega_A=\sum_{A=1}^{n}\bar{\nabla}_{A}F\;\omega_A
\end{equation*}
gives the relation
\begin{equation}\label{eq27}
	\tilde{F}_A=\tn_{A}F=e^{-\rho}\bar{\nabla}_{A}F=e^{-\rho}F_A,~~\mbox{ for } A=1,\cdots, n.
\end{equation}
Further, we have
\begin{equation}\label{eq-con-lap}
	\Delta_{\tilde{g}_N}F=e^{-2\rho}\Bigl(\Delta_{g_N}F+(n-2)\bar{\nabla} \rho \cdot \bar{\nabla} F\Bigr).
\end{equation}

\subsection{Conformal relations: the submanifolds}
Let $M$ be an $m$-dimensional submanifold of $N$ immersed by $\phi$.

When considering the metric $g_N$, $M$ has an induced metric $g_M=\phi^{\ast}g_N$.
Choose $\{e_A\}_{A=1}^n$ such that $\{e_i\}_{i=1}^m$ are tangent\footnote{Here we identify $e_i$ with $\phi_{\ast}(e_i)$} to $M$
and $\{e_{\alpha}\}_{\alpha=m+1}^{n}$ are normal to $M$.
Denote $\phi^{\ast}\omega_{A}=\theta_{A}, \phi^{\ast}\omega_{AB}=\theta_{AB}$.
Then restricted to $(M,g_M)$, we have (cf. \cite{Chern1968})
\begin{equation}\label{eq23}
\theta_{\alpha}=0, \quad\theta_{i\alpha}=\sum_j h_{ij}^{\alpha}\theta_{j},
\end{equation}
and
\begin{equation}
\left\{\begin{aligned}\label{eq22}
&d\theta_{i}=\sum_{j}\theta_{ij}\wedge\theta_{j}, \quad\theta_{ij}+\theta_{ji}=0,\\
& d\theta_{ij}-\sum_k\theta_{ik}\wedge\theta_{kj}=-\frac{1}{2}\sum_{k,l}R_{ijkl}\,\theta_k\wedge\theta_l,
\end{aligned}
\right.
\end{equation}
where $R_{ijkl}$ are components of the curvature tensor of $(M, g_M)$ and $h_{ij}^{\alpha}$ are components of the second fundamental form of $(M, g_M)$ in $(N, g_N)$.

We denote the mean curvature vector of $M$ (precisely, of $\phi$) by
\begin{equation*}
	\mathbf{H}_{\phi}=\frac{1}{n}
\sum\limits_\alpha(\sum\limits_i h_{ii}^{\alpha})e_\alpha=\sum\limits_\alpha H^{\alpha}e_\alpha.
\end{equation*}
We sometimes omit the subscript $\phi$ and just write $\mathbf{H}$ when the immersion is clear.

Similarly, we consider the induced metric $\tg_M=\phi^{\ast}\tg_N$,
and denote $\phi^{\ast}\tom_{A}=\tt_{A},\phi^{\ast}\tom_{AB}=\tt_{AB}$. Then restricted to $(M,\tg_M)$, we have
\begin{equation}\label{eq29}\tt_{\alpha}=0, \quad\tt_{i\alpha}=\sum_j \th_{ij}^{\alpha}\tt_{j},
\end{equation}
and
\begin{equation}
\left\{
\begin{aligned}\label{eq28}
&\displaystyle d\tt_{i}=\sum_{j}\tt_{ij}\wedge\tt_{j}, \quad\tt_{ij}+\tt_{ji}=0, \\
\displaystyle &d\tt_{ij}-\sum_k\tt_{ik}\wedge\tt_{kj}=-\frac{1}{2}\sum_{k,l}\tR_{ijkl}\,\tt_k\wedge\tt_l,
\end{aligned}
\right.\end{equation}
where $\tR_{ijkl}$ are components of the curvature tensor of $(M, \tg_M)$ and $\th_{ij}^{\alpha}$ are components of the second fundamental form of $(M, \tg_M)$ in $(N, \tg_N)$.

By pulling back \eqref{eq26} to $M$ by $\phi$ and using \eqref{eq23} and \eqref{eq29}, we obtain the following relation:
\begin{equation}\label{eq208}
\th_{ij}^{\alpha}=e^{-\rho}(h_{ij}^{\alpha}-\rho_{\alpha}\delta_{ij}), \quad\tH^{\alpha}=e^{-\rho}(H^{\alpha}-\rho_{\alpha}).
\end{equation}
Combining \eqref{eq26}, \eqref{eq27}, \eqref{eq22},  and \eqref{eq28}, we derive the following relation:
\begin{equation}\label{eq210}
e^{2\rho}\tR=R-(m-2)(m-1)|\nabla \rho|^2-2(m-1)\Delta \rho,
\end{equation}
where $R$ (or $\tR$ resp.) is the scalar curvature with respect to $g_M$ (or $\tg_M$ resp.).

The following proposition gives the relation between the conformal factor and the mean curvature.
\begin{prop}[cf. \cite{CW19a}*{Proposition 3.3 and Remark 3.4}]\label{prop3.2}
Let $(N, g_N)$ be an $n$-dimensional Riemannian manifold (possibly not complete) which admits a conformal immersion $\gamma$ in the sphere $(\mathbb{S}^n, h_1)$.
Assume $\gamma^{\ast}h_1=e^{2\rho}g_N$.
Let $M$ be an $m$-dimensional submanifold immersed in $(N,g_N)$ by $\phi$. Then we have
\begin{equation}\label{eq-prop-1}
	e^{2\rho}=\Bigl(|\mathbf{H}_\phi|^2+\bar{\mathcal{R}}_\phi\Bigr)-\frac{2}{m}\Delta \rho-\frac{m-2}{m}|\nabla\rho|^2-|(\bar{\nabla}\rho)^\bot-\mathbf{H}_\phi|^2,
\end{equation}
where
\begin{equation}\label{eq-prop-2}
	\bar{\mathcal{R}}_\phi=\frac{1}{m(m-1)}\sum_{i,j}K_N(e_i,e_j)
\end{equation}
for a local orthonormal tangent frame $\{e_i\}$ on $M$, and $K_N$ is the sectional curvature of $(N, g_N)$.

In particular, when $(N,g_N)=(\R, h_c)$, we have
\begin{equation}\label{eq-prop-3}
	e^{2\rho}=(|\mathbf{H}|^2+c)-\frac{2}{m}\Delta \rho-\frac{m-2}{m}|\nabla\rho|^2-|(\bar{\nabla}\rho)^\bot-\mathbf{H}|^2.
\end{equation}
\end{prop}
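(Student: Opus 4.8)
The plan is to use that $\gamma\colon N^{n}\to\mathbb{S}^{n}$ is a conformal immersion between \emph{equidimensional} manifolds, hence a local conformal diffeomorphism. Consequently $(N,\tilde g_N)=(N,\gamma^{\ast}h_1)$ is locally isometric to the round unit sphere, so it has constant sectional curvature $1$. This legitimizes computing the scalar curvature $\tilde R$ of $(M,\tilde g_M)$, with $\tilde g_M=e^{2\rho}g_M$, in two independent ways and equating the results. I would first record the two Gauss equations. Writing the scalar curvature of $M\subset(N,g_N)$ via the Gauss equation and identifying the tangential ambient-curvature trace $\sum_{i,j}\bar R_{ijij}=\sum_{i,j}K_N(e_i,e_j)=m(m-1)\bar{\mathcal R}_\phi$ through \eqref{eq-prop-2} gives
\[
R=m(m-1)\bar{\mathcal R}_\phi+m^{2}|\mathbf H_\phi|^{2}-|h|^{2}.
\]
Doing the same in $(N,\tilde g_N)$, whose sectional curvature is identically $1$, yields $\tilde R=m(m-1)+m^{2}|\tilde{\mathbf H}|^{2}-|\tilde h|^{2}$.

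Next I would insert the conformal transformation laws \eqref{eq208} for $\tilde h_{ij}^{\alpha}$ and $\tilde H^{\alpha}$ into $m^{2}|\tilde{\mathbf H}|^{2}-|\tilde h|^{2}$. Decomposing $\bar\nabla\rho$ into its tangential part $\nabla\rho$ and normal part $(\bar\nabla\rho)^{\bot}$, and using $\sum_\alpha\rho_\alpha H^\alpha=\langle(\bar\nabla\rho)^{\bot},\mathbf H\rangle$ together with $\sum_\alpha\rho_\alpha^{2}=|(\bar\nabla\rho)^{\bot}|^{2}$, a direct computation produces
\[
m^{2}|\tilde{\mathbf H}|^{2}-|\tilde h|^{2}
=e^{-2\rho}\bigl[m^{2}|\mathbf H|^{2}-|h|^{2}-2m(m-1)\langle(\bar\nabla\rho)^{\bot},\mathbf H\rangle+m(m-1)|(\bar\nabla\rho)^{\bot}|^{2}\bigr].
\]
Substituting the first Gauss equation to replace $m^{2}|\mathbf H|^{2}-|h|^{2}$ by $R-m(m-1)\bar{\mathcal R}_\phi$ then expresses $\tilde R$ entirely through $R$, $\bar{\mathcal R}_\phi$, $\mathbf H$ and the normal derivatives of $\rho$.

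Finally I would equate this with the intrinsic conformal scalar-curvature identity \eqref{eq210}, namely $\tilde R=e^{-2\rho}[R-(m-1)(m-2)|\nabla\rho|^{2}-2(m-1)\Delta\rho]$, which is what introduces the \emph{tangential} derivative terms $|\nabla\rho|^{2}$ and $\Delta\rho$. Multiplying through by $e^{2\rho}$ cancels the common $R$; dividing by $m(m-1)$ and solving for $e^{2\rho}$ gives
\[
e^{2\rho}=\bar{\mathcal R}_\phi+2\langle(\bar\nabla\rho)^{\bot},\mathbf H\rangle-|(\bar\nabla\rho)^{\bot}|^{2}-\frac{m-2}{m}|\nabla\rho|^{2}-\frac{2}{m}\Delta\rho .
\]
Completing the square via $2\langle(\bar\nabla\rho)^{\bot},\mathbf H\rangle-|(\bar\nabla\rho)^{\bot}|^{2}=|\mathbf H|^{2}-|(\bar\nabla\rho)^{\bot}-\mathbf H|^{2}$ reorganizes this into exactly \eqref{eq-prop-1}. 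The specialization \eqref{eq-prop-3} is then immediate, since $K_N\equiv c$ forces $\bar{\mathcal R}_\phi=c$ by \eqref{eq-prop-2}.

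I expect the main difficulty to be bookkeeping rather than conceptual: keeping the two distinct ambient curvatures straight, applying \eqref{eq208} correctly to the squared-norm quantities (where the mixed $\langle(\bar\nabla\rho)^{\bot},\mathbf H\rangle$ and $|(\bar\nabla\rho)^{\bot}|^{2}$ coefficients must come out as $-2m(m-1)$ and $+m(m-1)$), and checking the final completion of the square so that the spurious $R$ and $|\mathbf H|^{2}$ contributions cancel. The only genuinely non-routine point is the opening observation that equidimensionality turns the conformal immersion $\gamma$ into a local isometry onto the round sphere, which is precisely what allows the second Gauss equation to be taken over a constant-curvature-$1$ ambient space.
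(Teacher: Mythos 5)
Your proposal is correct and follows essentially the same route as the paper: the paper likewise writes the two Gauss equations \eqref{G1} and \eqref{G2} for the immersions $\phi$ and $\gamma\circ\phi$ (the latter into the constant-curvature-one metric $\gamma^{\ast}h_1$), combines them with the conformal transformation laws \eqref{eq208} and the scalar-curvature identity \eqref{eq210}, and completes the square to reach \eqref{eq-prop-1}. Your intermediate identity for $m^{2}|\tilde{\mathbf H}|^{2}-|\tilde h|^{2}$ and the final cancellation of $R$ check out, so you have simply made explicit the bookkeeping that the paper leaves to the reader.
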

\begin{proof}
	Consider two metric  $g_M=\phi^{\ast}g_N $ and $\tg_M=\phi^{\ast}(\gamma^{\ast}h_1)=(\gamma\circ \phi)^{\ast}h_1$ on $M$. The Gauss equations for the immersions $\phi$ and $\gamma\circ \phi$ imply
	\begin{align}
	R&=\sum_{i,j}K_N(e_i,e_j)+m^2|\mathbf{H}|^2-\sum_{i,j,\alpha}(h_{ij}^{\alpha})^2,\label{G1}\\
	\tR&=m(m-1)+m^2|\tilde{\mathbf{H}}|^2-\sum_{i,j,\alpha}(\th_{ij}^{\alpha})^2.\label{G2}
	\end{align}
	Hence, we derive \eqref{eq-prop-1} by using \eqref{eq208}, \eqref{eq210}, \eqref{G1} and \eqref{G2}.
\end{proof}

When $(N,g_N)=(\R, h_c)$, we usually denote the immersion of $M$ to $\R$ by $x$ instead of $\phi$, and $x$ can be viewed as the \emph{position vector}.
Then we have (cf. \cites{Tak66, Chern1968}):
\begin{equation*}
dx=\sum_{i}\theta_ie_i,\quad de_i=\sum_{j}\theta_{ij}e_j+\sum_{j}h_{ij}^{\alpha}\theta_{j}e_{\alpha}-c\theta_ix,\quad
de_{\alpha}=-\sum_{i,j}h_{ij}^{\alpha}\theta_{j}e_i+\sum_{\beta}\theta_{\alpha\beta}e_{\beta},
\end{equation*}
from which we obtain
\begin{equation}\label{eq_d2x}
x_i=e_i,\quad x_{ij}=\sum_{\alpha}h_{ij}^{\alpha}e_{\alpha}-c\delta_{ij}x,\quad \Delta x=m\mathbf{H}-mcx.
\end{equation}

\subsection{The generalization of the mean curvature.}\label{Sect:2.3}
Given a symmetric $(0,2)$-tensor $T=(T_{ij})$ on a (closed) manifold $M$, 
we define a normal vector field $H_T$ associated with $T$ by
\begin{equation*}
	H_T=\sum_{i=1}^mA(Te_i,e_i)=\sum_{\substack{m+1\leq \alpha\leq n\\ 1\leq i, j\leq m}}h_{ij}^{\alpha}T_{ij}e_{\alpha}.
\end{equation*}
We call $M$ is $T$-minimal in $N$ if $H_T\equiv 0$.

If we additionally assume that $T$ is  divergence-free, then we can define a self-adjoint operator $L_T$ associated to $T$ by
\begin{equation*}
	L_Tu=\dive(T\nabla u), u\in C^{\infty}(M),
\end{equation*}
where $T$ is regarded as a $(1, 1)$-tensor.

Now $H_T$ is a generalization of the mean curvature and the higher order mean curvatures, while $L_T$ is a generalization of the Laplacian.
Indeed, $H_T=m\mathbf{H}, L_T=\Delta$ when $T=I$ the identity.
Both $H_T$ and $L_T$ have been involved a lot of results (e.g., \cites{Gro04, CW19a}).

For a closed submanifold $M$ of $\mathbb{R}^n$, from \eqref{eq_d2x} one can easily obtain
\begin{equation}\label{eq-2.21}
	L_Tx=H_T, \quad \frac{1}{2}L_T|x|^2=\l x, H_T\r +\tr T
\end{equation}
and then the generalized Hsiung–Minkowski formula
\begin{equation}\label{eq-HM}
	0=\frac{1}{2}\int_M L_T|x|^2=\int_M\l x, H_T\r +\int_M \tr T
\end{equation}
by integrating the second equation of \eqref{eq-2.21}.

\section{Proofs for the Neumann eigenvalues}\label{Sect-3}
In this section, we prove Theorem \ref{thm-conf-h}. We divide the proof into two parts.
In Sect.~\ref{Sect3.1}, we prove the inequality in \eqref{eq-conf-h}; this is an easier part.
In Sect.~\ref{Sect3.2}, we discuss the equality case; this part needs more careful analysis.
We give a corollary to end the whole section.
\subsection{The inequality in \eqref{eq-conf-h}}\label{Sect3.1}
We have the variational characterization of $\la_i (i\ge 1)$:
\begin{equation}\label{eq-vc}
	\la_i=\inf_{u\in H^1(M)\setminus \{0\}}\left\{\frac{\int_M |\nabla u|^2}{\int_M u^2}\right|\left.\int_M uu_j=0, j=0, \cdots, i-1\right\},
\end{equation}
where $\{u_i\}_{i\ge 0}$ is an orthonormal set of eigenfunctions satisfying $\Delta u_i=-\la_i u_i$.

To construct suitable test functions, we recall the following lemma.
\begin{lem}[see \cites{LY82,ESI00}]\label{lem4.1}
	Let $\phi: (M^m,g)\to (\mathbb{S}^n, h_1)$ be a conformal map. Then there exists $\gamma\in \confs $ such that the immersion $\Phi=\gamma\circ \phi=(\Phi^1, \cdots, \Phi^{n+1})$ satisfies
	\begin{equation}\label{eq-4.1}
	\int_M \Phi^A=0,  \mbox{ for } A=1,\ldots,n+1.
	\end{equation}
	\end{lem}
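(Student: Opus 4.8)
The plan is to use a topological degree / continuity argument on the group of conformal diffeomorphisms of $\mathbb{S}^n$, which is the standard Hersch-type center-of-mass balancing trick. The conformal group $\confs$ is generated by rotations together with a family of conformal dilations. A convenient parametrization is the following: for each point $a$ in the open unit ball $B^{n+1}\subset\mathbb{R}^{n+1}$, there is a distinguished conformal dilation $\gamma_a$ of $\mathbb{S}^n$ (fixing $\pm a/|a|$ and pushing mass toward $a/|a|$), with $\gamma_0=\mathrm{id}$, and $\gamma_a$ degenerating to the constant map $a/|a|$ as $|a|\to 1$. First I would define the map
\begin{equation}\label{eq-center-of-mass}
	\mathcal{C}: B^{n+1}\to \mathbb{R}^{n+1}, \qquad \mathcal{C}(a)=\frac{1}{V(M,g)}\int_M (\gamma_a\circ\phi),
\end{equation}
where the integral is taken componentwise in the ambient $\mathbb{R}^{n+1}\supset\mathbb{S}^n$, using the volume form of $(M,g)$. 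Finding a zero of $\mathcal{C}$ is exactly finding the desired $\gamma=\gamma_a$ satisfying \eqref{eq-4.1}.

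The key analytic point is the boundary behavior of $\mathcal{C}$. Since $\phi(M)$ is a compact subset of $\mathbb{S}^n$ (and $\phi$ is non-degenerate, so its image is not a single point), as $a\to a_0$ with $|a_0|=1$ the maps $\gamma_a\circ\phi$ converge, away from a measure-zero set, to the constant $a_0$; by dominated convergence $\mathcal{C}(a)\to a_0$. Hence $\mathcal{C}$ extends continuously to the closed ball $\overline{B^{n+1}}$ with $\mathcal{C}|_{\partial B^{n+1}}=\mathrm{id}_{\mathbb{S}^n}$. A continuous map of the closed ball whose restriction to the boundary sphere is the identity must be surjective onto the ball (otherwise a missed interior point would give a retraction of $\overline{B^{n+1}}$ onto $\partial B^{n+1}$, contradicting the no-retraction theorem / nonvanishing of the degree). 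In particular $0$ is in the image, so there exists $a^{\ast}\in B^{n+1}$ with $\mathcal{C}(a^{\ast})=0$; taking $\gamma=\gamma_{a^{\ast}}$ and $\Phi=\gamma\circ\phi$ gives \eqref{eq-4.1}.

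The main obstacle, and the step requiring genuine care, is justifying the continuous extension of $\mathcal{C}$ to the boundary, i.e. that no concentration of mass along some positive-measure subset of $M$ can prevent $\mathcal{C}(a)\to a_0$. This is where the hypotheses on $\phi$ enter: because $\phi$ is a (non-degenerate) conformal map, the pullback of the singular concentration set under $\gamma_a$ has vanishing $g$-measure in the limit, so dominated convergence applies and the limit is the single point $a_0$ independent of the direction of approach. I would isolate this as the technical lemma; everything else is the soft topological degree argument above. (Note that the precise normalization making $\gamma_a\circ\phi$ into the stated $(\Phi^1,\dots,\Phi^{n+1})$ with ambient integral zero uses only that $\mathbb{S}^n\subset\mathbb{R}^{n+1}$ via the standard embedding.)
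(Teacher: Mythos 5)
Your argument is correct and is precisely the standard Hersch-type balancing argument from the cited sources \cite{LY82} and \cite{ESI00}; the paper offers no proof of its own for this lemma, and the family of conformal dilations $\gamma_a$, $a\in B^{n+1}$, that you use is exactly the one the paper records in Remark \ref{rem-4.1}. You also correctly isolate the only delicate step, namely the continuity of the center-of-mass map up to $\partial B^{n+1}$, which reduces to the fact that the pushforward of $dv_g$ under the non-degenerate conformal map $\phi$ assigns zero mass to single points, so that dominated convergence yields $\mathcal{C}(a)\to a_0$ as $a\to a_0$ with $|a_0|=1$.
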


Now we can start the proof. \eqref{eq-4.1} means that $\Phi^A$ is $L^{2}$-orthogonal to the first eigenfunction $u_0$ (a nonzero constant).
By using the QR-decomposition via the Gram–Schmidt process, we can further assume that $\{\Phi^1,\cdots, \Phi^{n+1}\}$ satisfies
\begin{equation}\label{eq-ortho-i}
	\int_M \Phi^A u_B=0,  \mbox{ for } 1\le B< A\le n+1.
\end{equation}
Indeed, if we denote
\begin{equation*}
	d_{AB}=\int_M \Phi^A u_B,  \mbox{ for } 1\le A, B\le n+1,
\end{equation*}
then the matrix $D=(d_{AB})=QR$, where $Q=(q_{AB})\in O(n+1)$ and $R$ is an upper triangular matrix.
This is equivalent to $R=Q^TD$. Hence, if we replace the old orthonormal basis $(E_1,\cdots, E_n)$ of $\mathbb{R}^{n+1}$ by the new one $(E_1',\cdots,E_{n+1}')=(E_1,\cdots, E_n)Q^T$, then these new coordinate functions, still denoted by $\Phi^A$, satisfy \eqref{eq-ortho-i}.

Hence, by the variational characterization \eqref{eq-vc},
for each $A=1,\cdots, n+1$, we have
	\begin{equation}\label{eq-4.5}
		\lambda_{A}\int_M (\Phi^A)^2\le \int_M|\nabla \Phi^A|^2.
	\end{equation}
	On the other hand, if we denote $\tilde{g}=\Phi^{\ast}h_1=fg$, then
	\begin{equation*}
		0\le |\tilde{\nabla} \Phi^A|^2\le 1-(\Phi^A)^2,\quad \sum_{A=1}^{n+1}|\tilde{\nabla} \Phi^A|^2=m,\quad \sum_{A=1}^{n+1}|\nabla\Phi^A|^2=mf,
	\end{equation*}
	where $\tilde{\nabla}$ is the gradient operator on $M$ with respect to $\tilde{g}$.
	Noting that $\sum_{A=1}^{n+1}\Phi_i^2=1$, from \eqref{eq-4.5} we obtain
\begin{align}
	V(M,g)=\int_Mdv_g\le{} &\sum_{A=1}^{m}\frac{1}{\lambda_{A}}\int_Mf|\tilde{\nabla} \Phi^A|^2+\sum_{A=m+1}^{n+1}\frac{1}{\lambda_{A}}\int_Mf|\tilde{\nabla} \Phi^A|^2\label{eq4.6}\\
	\le{} &\sum_{A=1}^{m}\frac{1}{\lambda_{A}}\int_Mf|\tilde{\nabla} \Phi^A|^2+\frac{1}{\lambda_{m}}\int_Mf\sum_{A=m+1}^{n+1}|\tilde{\nabla} \Phi^A|^2\label{eq4.7}\\
	={} &\sum_{A=1}^{m}\frac{1}{\lambda_{A}}\int_Mf|\tilde{\nabla} \Phi^A|^2+\frac{1}{\lambda_{m}}\int_Mf(m-\sum_{A=1}^{m}|\tilde{\nabla} \Phi^A|^2)\nonumber\\
	={} &\sum_{A=1}^{m}\frac{1}{\lambda_{A}}\int_Mf|\tilde{\nabla} \Phi^A|^2+\frac{1}{\lambda_{m}}\int_Mf\sum_{A=1}^{m}(1-|\tilde{\nabla} \Phi^A|^2)\nonumber\\
	\le{} &\sum_{A=1}^{m}\frac{1}{\lambda_{A}}\int_Mf|\tilde{\nabla} \Phi^A|^2+\sum_{A=1}^{m}\frac{1}{\lambda_{A}}\int_Mf(1-|\tilde{\nabla} \Phi^A|^2)\label{eq4.8}\\
	={} &\sum_{A=1}^{m}\frac{1}{\lambda_{A}}\int_M f.\label{eq-4.12}
\end{align}
By using the H\"{o}lder inequality, we have
\begin{align}
	\mathfrak{H}(\lambda_1,\cdots,\lambda_m)&\le \frac{m}{V(M,g)}\int_Mf\,dv_g\nonumber\\
	&\le \frac{m}{V(M,g)}\Bigl(\int_Mf^{m/2}\,dv_g\Bigr)^{2/m}\bigl(V(M)\bigr)^{\frac{m-2}{m}}\nonumber\\
	&= m\Bigl(\frac{V(M, \tilde{g})}{V(M,g)}\Bigr)^{2/m}\nonumber\\
	&\le m\Bigl(\frac{\sup_{\gamma\in \confs}V(M, (\gamma\circ\phi)^{\ast}h_1)}{V(M,g)}\Bigr)^{2/m}.\label{eq-4.13}
\end{align}
Taking the infimum over all non-degenerate conformal maps $\phi: M\to \mathbb{S}^n$, we obtain the desired inequality.

\subsection{Equality in \eqref{eq-conf-h}}\label{Sect3.2}
Now we discuss the equality.
First, we assume the equality holds.
Choose a sequence of conformal maps $\phi_j: M \to \mathbb{S}^n$ such that 
	\begin{equation*}
		\lim_{j\to \infty}V_{c}(M,n,\phi_j ) = V_c(M, n),
	\end{equation*}
	and by composing with a conformal transformation of the sphere we may assume
	\begin{equation}\label{eq-4.15}
			\int_M \phi_j^A u_B=0,  \mbox{ for } 0\le B< A\le n+1.
	\end{equation}

	We denote $mf_j=\sum_{A=1}^{n+1}|\nabla \phi_j^A|^2$ and write $\mathfrak{H}(\lambda_1,\cdots, \lambda_m)$ as $\mathfrak{H}$ for short.

	We reproduce the previous steps from \eqref{eq4.6} to \eqref{eq-4.13} for each $\phi_j$ and then take the limit.
	It follows that all the inequalities must be sharp.
	Indeed, noticing \eqref{eq4.6} and \eqref{eq-4.12}, we have
	\begin{align*}
		V(M,g)=\int_M\sum_{A=1}^{n+1}(\phi_j^A)^2&\le \mathfrak{H}^{-1}\int_M mf_j\\
			&\le \mathfrak{H}^{-1}m\Bigl(\int_Mf_j^{m/2}\,dv_g\Bigr)^{2/m}\bigl(V(M)\bigr)^{\frac{m-2}{m}}\\
			&\le \mathfrak{H}^{-1}m \bigl(V_c(M,n,\phi_j)\bigr)^{2/m}\bigl(V(M)\bigr)^{\frac{m-2}{m}}.
	\end{align*}
Letting $j\to \infty$ and using $ \mathfrak{H} =m\Bigl(\frac{V_c(M,n)}{V(M,g)}\Bigr)^{2/m}$, we obtain
\begin{align*}
	V(M,g)&=\lim_{j\to \infty}\int_M\sum_{A=1}^{n+1}(\phi_j^A)^2=\mathfrak{H}^{-1}\lim_{j\to \infty}\int_M \sum_{A=1}^{n+1}|\nabla \phi_j^A|^2\\
	&=\mathfrak{H}^{-1}\lim_{j\to \infty}\Bigl(\int_M\bigl(\sum_{A=1}^{n+1}|\nabla \phi_j^A|^2\bigr)^{m/2}\,dv_g\Bigr)^{2/m}\bigl(V(M)\bigr)^{\frac{m-2}{m}}\\
	&=V(M,g).
\end{align*}

Hence, for any fixed $A$, $\{\phi_j^A\}$ is a bounded sequence in $W^{1, m}(M)$.
Due to the compact inclusion $W^{1, m}(M)\hookrightarrow L^2(M)$, by passing to a subsequence we can assume that $\{\phi_j^A\}$ converges weakly in $W^{1, m}(M)$, strongly in $L^2(M)$, and pointwise a.e., to a map $\psi^A: M\to \mathbb{R}$.
We have
\begin{gather}
	\sum_{A=1}^{n+1}(\psi^A)^2=1, \mbox{ a.e. on $M$};\nonumber\\
	\int_{M} (\phi_j^A)^2
	\le\frac{1}{\lambda_A}\int_M |\nabla \phi_j^A|^2, \mbox{ for $A=1,\cdots, n$};\label{eq-4.22}\\
	\lim_{j\to\infty}\int_M (\phi_j^A)^2=\lim_{j\to\infty}\frac{1}{\lambda_A}\int_M |\nabla \phi_j^A|^2.\label{eq-4.23}
\end{gather}

\textbf{Claim 1.} For each $A$ fixed, $\{\phi_j^A\}$ converges to $\psi^A$ strongly in $W^{1, 2}(M)$; $\psi^A$ belongs to the eigenspace $E_{\lambda_A}$ corresponding to the eigenvalue $\lambda_A$, and then $\psi^A\in C^{\infty}(M)$.

\begin{proof}
Due to the weak convergence, \eqref{eq-4.15} implies
\begin{equation}
	\int_M \psi^A u_B=0,  \mbox{ for } 0\le B< A\le n+1.\label{eq-4.24}
\end{equation}
Then it follows from \eqref{eq-4.22} and \eqref{eq-4.23} that
\begin{equation*}
	\lim_{j\to\infty}\frac{1}{\lambda_A}\int_M |\nabla \phi_j^A|^2=\lim_{j\to\infty}\int_{M} (\phi_j^A)^2=\int_{M} (\psi^A)^2\le \frac{1}{\lambda_A}\int_M |\nabla \psi^A|^2.
\end{equation*}
On the other hand, $\phi_j^A \to \psi^A$ weakly in $W^{1,m}(M)$ implies the weak lower semicontinuity
	\begin{equation*}
		\int_{M}|\nabla \psi^A|^2\le\lim_{j\to\infty}\int_M |\nabla \phi_j^A|^2.
	\end{equation*}
Therefore, we must have
\begin{equation*}
	\lim_{j\to\infty}\int_M |\nabla \phi_j^A|^2=\int_{M}|\nabla \psi^A|^2,\quad \lambda_A\int_{M} (\psi^A)^2=\int_M |\nabla \psi^A|^2.
\end{equation*}
Noticing \eqref{eq-4.24}, these equalities prove Claim 1.
\end{proof}

Now we can continue the proof.
	Denote $mf=\sum_{A=1}^m|\nabla\psi^A|^2$.
	By taking the limit, all the inequalities
	from \eqref{eq4.6} to \eqref{eq-4.13} for $\phi$ become the equalities, that is,
	\begin{align}
		V(M,g)=\int_M\sum_{A=1}^m|\psi^A|^2={} &\sum_{A=1}^{m}\frac{1}{\lambda_{A}}\int_M |\nabla \psi^A|^2+\sum_{A=m+1}^{n+1}\frac{1}{\lambda_{A}}\int_M|\nabla \psi^A|^2\nonumber\\
		={} &\sum_{A=1}^{m}\frac{1}{\lambda_{A}}\int_M|\nabla \psi^A|^2+\frac{1}{\lambda_{m}}\int_M\sum_{A=m+1}^{n+1}|\nabla \psi^A|^2\label{eq4.7-psi}\\
		={} &\sum_{A=1}^{m}\frac{1}{\lambda_{A}}\int_M|\nabla \psi^A|^2+\frac{1}{\lambda_{m}}\int_M  f\sum_{A=1}^{m}\bigl(1-\frac{1}{f}|\nabla \psi^A|^2\bigr)\nonumber\\
		={}&\sum_{A=1}^{m}\frac{1}{\lambda_{A}}\int_M|\nabla \psi^A|^2+\sum_{A=1}^{m}\frac{1}{\lambda_{A}}\int_Mf\bigl(1-\frac{1}{f}|\nabla \psi^A|^2\bigr)\label{eq4.8-psi}\\
		={} &\sum_{A=1}^{m}\frac{1}{\lambda_{A}}\int_M f= \mathfrak{H}^{-1}m\Bigl(\int_Mf^{m/2}\,dv_g\Bigr)^{2/m}\bigl(V(M)\bigr)^{\frac{m-2}{m}}\nonumber\\
		={}&\mathfrak{H}^{-1}m\Bigl(V_c(M,n)\Bigr)^{2/m}\bigl(V(M)\bigr)^{\frac{m-2}{m}}= V(M, g).\nonumber
	\end{align}

	The strong convergence in $W^{1,2}(M)$ proves that
	\begin{equation*}
		\psi^{\ast}h_1=\lim_{j\to\infty}\phi_j^{\ast}h_1=\lim_{j\to\infty}f_jg=fg,
	\end{equation*}
which means that  $\psi: M\to \mathbb{S}^n$ is a conformal  map with the conformal factor $f$.

Next, we prove the following lemma.
\begin{lem}\label{lem-3.4}
	If the equality holds in \eqref{eq-conf-h}, then $\lambda_1=\cdots=\lambda_m$ and $f=\lambda_1/m$.
\end{lem}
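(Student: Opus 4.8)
The plan is to exploit the two pointwise identities that $\psi$ satisfies together with the equality conditions that are now forced throughout the saturated chain \eqref{eq4.7-psi}--\eqref{eq4.8-psi}. By Claim 1 each component $\psi^A$ lies in the eigenspace $E_{\lambda_A}$ and is smooth, so $\Delta(\psi^A)^2=-2\lambda_A(\psi^A)^2+2|\nabla\psi^A|^2$. First I would apply the Laplacian to the constraint $\sum_{A=1}^{n+1}(\psi^A)^2\equiv 1$; since $\sum_A|\nabla\psi^A|^2=mf$ (the trace of $\psi^{\ast}h_1=fg$), this produces the pointwise identity
\begin{equation*}
	mf=\sum_{A=1}^{n+1}\lambda_A(\psi^A)^2 .
\end{equation*}

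Next I would extract the equality conditions from the chain. In the step producing \eqref{eq4.8-psi} one compares $\tfrac1{\lambda_m}$ with $\tfrac1{\lambda_A}$ against the nonnegative quantity $\int_M(f-|\nabla\psi^A|^2)$ (nonnegative because $|\nabla\psi^A|^2=f|\tilde\nabla\psi^A|^2\le f$) for $A\le m$; equality therefore forces, for every such $A$, either $\lambda_A=\lambda_m$ or $|\nabla\psi^A|^2\equiv f$ on $M$. Likewise the step producing \eqref{eq4.7-psi} forces $\lambda_A=\lambda_m$ for every index $A>m$ with $\psi^A\not\equiv 0$.

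The crux, and the step I expect to be the main obstacle, is ruling out the possibility $\lambda_A<\lambda_m$ for some $A\le m$, since a priori the conformal factor $f$ may vanish (branch points), which would defeat a naive maximum-principle argument. Suppose such an index $A_0$ exists; then $|\nabla\psi^{A_0}|^2\equiv f$ on $M$. As $\psi^{A_0}$ is smooth and $M$ is compact, it attains a maximum at some $x_0$, where $\nabla\psi^{A_0}(x_0)=0$; in the boundary case this holds because the tangential derivative vanishes at the boundary maximum while the normal derivative vanishes by the Neumann condition. Hence $f(x_0)=0$, and the pointwise identity gives $\sum_A\lambda_A\psi^A(x_0)^2=0$. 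Because every $\lambda_A\ge\lambda_1>0$, this forces $\psi^A(x_0)=0$ for all $A$, contradicting $\sum_A(\psi^A(x_0))^2=1$. Thus no such $A_0$ exists and $\lambda_1=\dots=\lambda_m$.

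Finally, with $\lambda_1=\dots=\lambda_m$ established, the condition from \eqref{eq4.7-psi} shows that every surviving component ($A>m$) also carries the eigenvalue $\lambda_m=\lambda_1$, so the identity collapses to $mf=\lambda_1\sum_A(\psi^A)^2=\lambda_1$, i.e.\ $f=\lambda_1/m$, as claimed. It is worth noting that this route needs no prior constancy of $f$ (the identity above turns the degeneracy $f(x_0)=0$ directly into a contradiction) and works uniformly for all $m\ge 2$, so the Hölder equality in \eqref{eq-conf-h} is not required to pin down $f$ separately.
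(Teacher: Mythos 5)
Your proof is correct and follows the paper's argument almost step for step: the same pointwise identity $mf=\sum_{A}\lambda_A(\psi^A)^2$ obtained by applying $\Delta$ to $\sum_A(\psi^A)^2\equiv 1$, the same extraction of the equality conditions from \eqref{eq4.7-psi} and \eqref{eq4.8-psi}, and the same final collapse to $f=\lambda_1/m$. The one step you do differently is the exclusion of the alternative $|\nabla\psi^{A_0}|^2\equiv f$ for an index with $\lambda_{A_0}<\lambda_m$. The paper disposes of this in one line using the pointwise bound $|\tilde\nabla\psi^A|^2\le 1-(\psi^A)^2$ enjoyed by the ambient coordinate functions: $|\tilde\nabla\psi^{A_0}|^2\equiv 1$ would force $\psi^{A_0}\equiv 0$, hence $\nabla\psi^{A_0}\equiv 0$, a contradiction. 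You instead evaluate at a maximum point of $\psi^{A_0}$ (correctly handling the boundary case via the Neumann condition), where the gradient and hence $f$ vanish, and then the identity $mf=\sum_A\lambda_A(\psi^A)^2$ together with $\sum_A(\psi^A)^2=1$ and $\lambda_A\ge\lambda_1>0$ yields the contradiction. Both arguments are sound; the paper's is shorter because it reuses the sphere constraint already needed for the inequality, while yours has the minor advantage of not invoking that bound again and of exhibiting directly that $f\ge\lambda_1/m>0$ everywhere, so the possible degeneracy of the conformal factor never enters.
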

\begin{proof}
	Recalling Claim 1, we have
	\begin{equation}\label{eq-3.12}
		\Delta \psi^A=-\la_A\psi^A,
	\end{equation}
	and then
	\begin{equation}\label{eq3.13}
		mf=\sum_{A=1}^{n+1}|\nabla \psi^A|^2=\sum_{A=1}^{n+1}\Bigl( \frac{1}{2}\Delta (\psi^A)^2-\psi^A\Delta \psi^A\Bigr)
			=\sum_{A=1}^{n+1}\lambda_A(\psi^A)^2.
	\end{equation}

	We point out that $|\nabla \psi^A|^2/f\equiv 1$ is impossible. Otherwise, $1\equiv |\nabla \psi^A|^2/f=|\tilde{\nabla} \psi^A|^2\le 1-(\psi^A)^2$, which means $\psi^A\equiv 0$ and then $|\nabla \psi^A|^2/f=0$, a contradiction.
	Hence, equality in \eqref{eq4.8-psi} gives $\lambda_t=\lambda_m$ for all $t\le m-1$.

	If $|\nabla \psi^A|^2\equiv 0$, then $\psi^{A}$ is constant on $M$, which implies $\psi^{A}\equiv 0$ by \eqref{eq-3.12}.
	If $\psi^s\not\equiv 0$ for some $s\ge m+1$, then equality in \eqref{eq4.7-psi} gives $\lambda_m=\lambda_s$.
	Hence, we conclude from \eqref{eq3.13} that
	\begin{equation*}
		mf=\lambda_m\sum_{\stackrel{1\le A\le n+1}{\psi^A\not\equiv 0}}(\psi^A)^2=\lambda_m=\lambda_1.
	\end{equation*}
	The proof is complete.
\end{proof}

By scaling we may assume $\lambda_1=m$. Then $\psi: M\to \mathbb{S}^n$ is an isometric immersion. Since each $\psi^A\in E_{\lambda_1}$, we know that $\psi$ is minimal by the well-known Takahashi's theorem (\cite{Tak66}*{Theorem 3}).

Conversely, the sufficiency for the equality is directly from the following theorem  when $M$ is closed.
\begin{thm}[\cite{ESI86}*{Theorem 1.1}]\label{thm-ESI-1.1}
	Let $(M, g)$ be an $m$-dimensional closed Riemannian manifold.
	Suppose there exists a minimal isometric immersion $\phi$ of $(M, g)$ in $\mathbb{S}^n$. Then
	\begin{equation*}
		V(M, g) = V_c(M, n, \phi)\ge V_c(M,n).
	\end{equation*}
	Moreover, if $(M,g)$ is not isometric to $(\mathbb{S}^m, h_1)$, then $V(M,g)> V(M, (\gamma\circ\phi)^\ast h_1)$ for all $\gamma\in G\setminus O(n+1)$.
	Here $G=\{\gamma_a|a\in B^{n+1}\}$ is a subgroup of $\confs$, see the remark below for details.
\end{thm}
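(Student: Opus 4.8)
The plan is to follow El Soufi--Ilias, using Proposition~\ref{prop3.2}. The bound $V_c(M,n,\phi)\ge V_c(M,n)$ is immediate, since $V_c(M,n)$ is an infimum over all non-degenerate conformal maps and $\phi$ is one of them; likewise $V_c(M,n,\phi)=\sup_{\gamma\in\confs}V(\gamma(\phi(M)))\ge V(\mathrm{id}(\phi(M)))=V(M,g)$ because $\phi$ is isometric. Hence the entire content is the reverse bound $V_c(M,n,\phi)\le V(M,g)$. Writing $\mathbb{S}^n\subset\mathbb{R}^{n+1}$ and recalling $\confs\cong O(n+1)\ltimes B^{n+1}$, where the $O(n+1)$ factor preserves volume, it suffices to treat the M\"{o}bius maps $\gamma_a$, $a\in B^{n+1}$, whose conformal factor on $\mathbb{S}^n$ is $\mu_a(x)=(1-|a|^2)/|x-a|^2$. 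Since $\phi$ is isometric, $(\gamma_a\circ\phi)^\ast h_1=(\mu_a\circ\phi)^2 g$, so the goal becomes $G(a)\le G(0)=V(M,g)$ for $G(a):=\int_M(\mu_a\circ\phi)^m\,dv_g$.

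First I would record the consequences of minimality. Takahashi's theorem \cite{Tak66} gives $\Delta\phi^A=-m\phi^A$, so each coordinate function is $L^2$-orthogonal to the constants and $\int_M\phi^A\,dv_g=0$; equivalently $\mathbf{H}_\phi=0$ and $\bar{\mathcal R}_\phi=1$ in Proposition~\ref{prop3.2}. This balancing kills the first variation: a direct expansion gives $\partial_{a_k}G(0)=2m\int_M\phi^k\,dv_g=0$. For the second variation I would move along the geodesic $a=\tanh(\tau/2)v$, $|v|=1$, for which $G=F(\tau):=\int_M P_\tau^{-m}\,dv_g$ with $P_\tau=\cosh\tau-c\sinh\tau$ and $c=\langle\phi,v\rangle$. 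Using $\Delta c=-mc$ together with the position-vector identity $|\nabla c|^2=1-c^2-|v^N|^2$, where $v^N$ is the part of $v$ normal to $M$ inside $T\mathbb{S}^n$, one obtains the clean identity $(m+1)\int_M c^2\,dv_g=V(M,g)-\int_M|v^N|^2\,dv_g$, whence $F''(0)=-m\int_M|v^N|^2\,dv_g\le0$. Thus $\tau=0$ is a critical point and a local maximum, and this computation already isolates the rigidity mechanism.

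The main obstacle is upgrading this local maximum to a global one. Differentiating and using $P_\tau''=P_\tau$ (hence $(P_\tau')^2=P_\tau^2+c^2-1$) gives
\[
F''(\tau)=m^2F(\tau)-m(m+1)\int_M(1-c^2)\,P_\tau^{-m-2}\,dv_g,
\]
while $F(\tau)\to0$ as $\tau\to\pm\infty$ and $F(0)=V(M,g)$. I would show that $F''(\tau)<0$ at every critical point, i.e. that the Poincar\'{e}-type inequality $(m+1)\int_M(1-c^2)P_\tau^{-m-2}\,dv_g>m\int_M P_\tau^{-m}\,dv_g$ holds there; since two strict local maxima of a smooth function on $\mathbb{R}$ must be separated by a local minimum, this forces the critical point to be unique, so $\tau=0$ is the global maximum and $G(a)\le V(M,g)$. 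The difficulty is that this inequality must be established at the image $\gamma_\tau(\phi(M))$, which is \emph{no longer minimal}, so the clean identity of the previous paragraph is unavailable; the substitute is the balancing condition on the mean curvature of the image forced by $F'(\tau)=0$, which must be fed back into the Gauss-equation computation underlying Proposition~\ref{prop3.2}. This is the technical heart of the theorem.

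Finally, for the rigidity statement I would argue that an equality $V(M,(\gamma\circ\phi)^\ast h_1)=V(M,g)$ with $\gamma\in G\setminus O(n+1)$ produces, along the corresponding geodesic, a second global maximum; by the strict concavity at critical points just described this is possible only if $F\equiv V(M,g)$ there, forcing $F''(0)=-m\int_M|v^N|^2\,dv_g=0$, i.e. $v^N\equiv0$ on $M$. Tracing this degeneracy through the relevant directions trivializes the normal space of $M$ in $\mathbb{S}^n$, so $m=n$ and, $M$ being closed, $M=(\mathbb{S}^m,h_1)$. Hence for $M$ not isometric to the round sphere the inequality is strict for every $\gamma\in G\setminus O(n+1)$, which is the asserted refinement.
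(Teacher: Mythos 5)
The paper does not prove this statement: it is imported verbatim from \cite{ESI86}*{Theorem 1.1}, and the only thing the paper records about its proof is Remark \ref{rem-4.1}, namely the decomposition $\gamma=r\circ\gamma_a$ with $r\in O(n+1)$, which you also use. So the comparison can only be against the correctness of your argument itself.

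Your reductions and your computations at the identity are fine: the two easy inequalities, the reduction to the subgroup $G=\{\gamma_a\}$, the conformal factor $\mu_a$, the vanishing of the first variation via $\Delta\phi^A=-m\phi^A$ (hence $\int_M\phi^A=0$), the identity $(m+1)\int_M c^2=V(M,g)-\int_M|v^N|^2$, and $F''(0)=-m\int_M|v^N|^2\le 0$ all check out. But the proof has a genuine gap exactly where you flag ``the technical heart'': you never establish that $\tau=0$ is a \emph{global} maximum of $F$. Your proposed route --- prove the Poincar\'e-type inequality $(m+1)\int_M(1-c^2)P_\tau^{-m-2}>m\int_M P_\tau^{-m}$ at every critical point $\tau$ --- is only stated, not proved, and as you yourself observe the identity you used at $\tau=0$ is unavailable there because $\gamma_\tau\circ\phi$ is no longer minimal. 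A proof that stops at its decisive step is not a proof; the known arguments close this step not by a variational analysis along one-parameter subgroups but by a direct integral identity obtained from the conformal transformation law of the mean curvature (essentially Proposition \ref{prop3.2} applied to the non-minimal image) followed by an integration by parts, and reproducing that is precisely the content you are missing.

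Two secondary problems. First, your claim that $F(\tau)\to 0$ as $\tau\to\pm\infty$ is unjustified and is in fact false in the rigidity case (for $\phi=\mathrm{id}:\mathbb{S}^n\to\mathbb{S}^n$ one has $F\equiv V(\mathbb{S}^n)$); since your global argument leans on the behaviour at infinity to locate the supremum, this needs repair even granting the unproved concavity at critical points. Second, the rigidity discussion is circular in places: equality along a single geodesic direction $v$ would at best give $v^N\equiv 0$ for that one $v$, and upgrading this to ``the normal bundle is trivial, hence $m=n$'' requires degeneracy in \emph{all} directions, which you do not obtain from a single equality $V(M,(\gamma_a\circ\phi)^*h_1)=V(M,g)$. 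Moreover the deduction ``a second global maximum forces $F\equiv V(M,g)$'' does not follow from strict concavity at critical points --- strict concavity at critical points would outright forbid a second interior maximum rather than force constancy --- so the logic of that paragraph needs to be restructured around the non-strict case $F''(0)=0$.
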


\begin{rem}\label{rem-4.1}
	For each $a\in B^{n+1}$, we can define a conformal map $\gamma_a$ on $\mathbb{S}^n(1)$ (cf. \cite{MR86}):
		\begin{equation*}
		\gamma_a(x)=\frac{x+(\mu f+\lambda)a}{\lambda(1+f)}, \quad \forall x\in \mathbb{S}^n(1),
		\end{equation*}
		where $B^{n+1}$ is the open unit ball in $\mathbb{R}^{n+1}$, $x$ is the position vector of $\mathbb{S}^n(1)$, and
		\begin{equation*}
		\lambda=(1-|a|^2)^{-1/2},\quad\mu=(\lambda-1)|a|^{-2},\quad f(x)=\l x, a\r.
		\end{equation*}
		When $a=0$, we set $\lambda=1,\mu=0, \gamma_0(x)=x.$

The proof of Theorem \ref{thm-ESI-1.1} used the following fact, which allows us to replace $\confs$ by the subgroup  $G=\{\gamma_a|a\in B^{n+1}\}$ when taking the supremum in the definition of $V(M,n,\phi)$.
(see \cite{ESI86}*{pp.~259}):

	For any $\gamma\in \confs$, there exists $a\in B^{n+1}$ and $r\in O(n+1)$ such that $\gamma=r\circ \gamma_a$.
\end{rem}

\subsection{A corollary for submanifolds of a sphere}
\begin{cor}\label{cor-4.1}
	(1) Let $(M, g)$ be an $m$-dimensional closed Riemannian manifold
	which can be minimally immersed into $\mathbb{S}^n$ by  $\phi$.
	If $(M,g)$ is not isometric to $(\mathbb{S}^m, h_1)$,
	then for any metric $\tilde{g}\in [g]$, we have
	\begin{equation}\label{eq-4.41}
		\mathfrak{H}(\tilde{g})V(M,\tilde{g})^{2/m}\le mV(M,g)^{2/m},
	\end{equation}
	where $[g]$ is the conformal class of $g$, and $\mathfrak{H}(\tilde{g})$ is the harmonic mean of the first $m$ nonzero eigenvalues with respect to $\tilde{g}$.

	Moreover, equality holds if and only if the minimal immersion $\phi$ is given by a subspace of the first eigenspace and $\tilde{g}=kg$ for some constant $k>0$.

	(2)  For any metric $g$ on $\mathbb{S}^m, g\in [h_1]$, we have
	\begin{equation*}
		\mathfrak{H}(g)V(\mathbb{S}^m,g)^{2/m}\le \mathfrak{H}(h_1)V(\mathbb{S}^m,h_1)^{2/m}.
	\end{equation*}
	Moreover, equality holds if and only if  $g=kh_1$ for some constant $k>0$.
\end{cor}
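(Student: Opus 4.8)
The plan is to read off both inequalities from the conformal bound Theorem~\ref{thm-conf-h} combined with the rigidity of the conformal volume under a minimal immersion (Theorem~\ref{thm-ESI-1.1}), and then to pin down the equality cases by asking which links in the chain \eqref{eq4.6}--\eqref{eq-4.13} can be saturated. Throughout I will use that $V_c(M,n)$ depends only on the conformal class $[g]$, since a map is conformal for $g$ if and only if it is conformal for any $\tilde g\in[g]$.

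For part (1) the starting point is that $\phi:(M,g)\to\mathbb{S}^n$ minimal isometric means $\phi^{\ast}h_1=g$, so $\phi$ is a conformal map from $(M,\tilde g)$ for every $\tilde g\in[g]$, and Theorem~\ref{thm-ESI-1.1} gives $V_c(M,n)\le V_c(M,n,\phi)=V(M,g)$. Inserting this into Theorem~\ref{thm-conf-h} applied to the metric $\tilde g$ yields at once
\begin{equation*}
\mathfrak{H}(\tilde g)\,V(M,\tilde g)^{2/m}\le m\,V_c(M,n)^{2/m}\le m\,V(M,g)^{2/m},
\end{equation*}
which is \eqref{eq-4.41}. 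For the equality discussion I would instead run the argument concretely: after centering $\phi$ by some $\gamma\in\confs$ via Lemma~\ref{lem4.1} to obtain $\Phi=\gamma\circ\phi$ with $\int_M\Phi^A\,dv_{\tilde g}=0$, the chain \eqref{eq4.6}--\eqref{eq-4.13} for $\tilde g$ gives $\mathfrak{H}(\tilde g)V(M,\tilde g)^{2/m}\le m\,V(\Phi(M))^{2/m}\le m\,V_c(M,n,\phi)^{2/m}=m\,V(M,g)^{2/m}$.

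For the equality in part (1) I would argue that equality forces every inequality in this concrete chain to be saturated. Saturation of the last step means $V(\Phi(M))=V_c(M,n,\phi)$, i.e.\ $\gamma$ realizes the defining supremum; since $(M,g)$ is not isometric to $(\mathbb{S}^m,h_1)$, the ``moreover'' clause of Theorem~\ref{thm-ESI-1.1}, together with the decomposition $\gamma=r\circ\gamma_a$ of Remark~\ref{rem-4.1}, forces $a=0$, hence $\gamma\in O(n+1)$. Then $\Phi=r\circ\phi$, so $\Phi^{\ast}h_1=\phi^{\ast}h_1=g$ and the conformal factor $f$ defined by $\Phi^{\ast}h_1=f\tilde g$ equals $e^{-2\rho}$, where $\tilde g=e^{2\rho}g$. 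Saturation of the H\"older step in \eqref{eq-4.13} forces $f$ constant, hence $\rho$ constant and $\tilde g=kg$. Equality then holds for $g$ itself by scale invariance, so Lemma~\ref{lem-3.4} gives $\lambda_1(g)=\dots=\lambda_m(g)$, and $\mathfrak{H}(g)=m$ forces each to equal $m$; since $\phi$ is minimal isometric its coordinates are eigenfunctions for the eigenvalue $m=\lambda_1(g)$ by Takahashi's theorem \cite{Tak66}, i.e.\ $\phi$ is given by a subspace of the first eigenspace. Conversely, if $\phi$ is by the first eigenspace then $\lambda_1(g)=m$ with multiplicity at least $m+1$, so $\lambda_1=\dots=\lambda_m=m$ and equality holds for $g$, hence for every $\tilde g=kg$ by scaling.

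For part (2) the same mechanism applies with $(\mathbb{S}^m,h_1)$ as reference: the round sphere satisfies (C1), so it saturates Theorem~\ref{thm-conf-h}, giving $\mathfrak{H}(h_1)V(\mathbb{S}^m,h_1)^{2/m}=m\,V_c(\mathbb{S}^m,n)^{2/m}$, with $\mathfrak{H}(h_1)=m$ since $\lambda_1(h_1)=m$ has multiplicity $m+1$. For any $g\in[h_1]$, Theorem~\ref{thm-conf-h} gives $\mathfrak{H}(g)V(\mathbb{S}^m,g)^{2/m}\le m\,V_c(\mathbb{S}^m,n)^{2/m}$, which is the claimed bound; sufficiency of $g=kh_1$ is again immediate by scaling. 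The main obstacle is the equality case, because the strict-supremum clause of Theorem~\ref{thm-ESI-1.1} is now unavailable: every $\gamma\in\confs$ maps $\mathbb{S}^m$ onto itself, so the supremum defining $V_c$ is constant and the argument above that forces $\gamma\in O(n+1)$ collapses. Here I would instead use that equality saturates Theorem~\ref{thm-conf-h} and invoke the equality analysis of Section~\ref{Sect3.2} to conclude that $g$ is, up to homothety, a metric admitting a minimal isometric immersion by first eigenfunctions; the delicate point---and the step I expect to require the most care---is the rigidity that on the topological sphere with $g\in[h_1]$ such a metric must be round, so that $g=kh_1$ modulo a conformal automorphism of $\mathbb{S}^m$.
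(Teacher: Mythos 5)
Your argument for part (1) and for the inequality in part (2) is essentially the paper's own: the bound comes from feeding $V_c(M,n)\le V_c(M,n,\phi)=V(M,g)$ (Theorem \ref{thm-ESI-1.1}) into Theorem \ref{thm-conf-h}, and the equality case of (1) is settled by forcing the optimizing conformal transformation into $O(n+1)$ via the strict-inequality clause of Theorem \ref{thm-ESI-1.1} together with Remark \ref{rem-4.1}. The paper phrases this slightly differently --- it normalizes $\mathfrak{H}(\tilde g)=m$, invokes the equality case of Theorem \ref{thm-conf-h} as a black box to produce a minimal isometric immersion $\psi$ of $(M,\tilde g)$ with $\lambda_1(\tilde g)=\cdots=\lambda_m(\tilde g)=m$, writes $\psi=\gamma_a\circ\phi$, and concludes $a=0$, hence $\tilde g=\psi^{\ast}h_1=\phi^{\ast}h_1=g$ --- whereas you saturate the chain \eqref{eq4.6}--\eqref{eq-4.13} directly and extract the constancy of the conformal factor from the H\"older step; the two are the same in substance.

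The only place your proposal stops short of the stated claim is the ``only if'' direction of the equality case in part (2), which you explicitly flag as the delicate point: one needs that a metric $g\in[h_1]$ on $\mathbb{S}^m$ saturating Theorem \ref{thm-conf-h} must be $kh_1$. To be fair, you have not missed an idea that the paper actually supplies: its proof of (2) is the single sentence that the assertion ``follows from Theorem \ref{thm-conf-h}'' together with $V_c(\mathbb{S}^m,g)=V(\mathbb{S}^m,h_1)$ and $\mathfrak{H}(h_1)=m$, with no rigidity argument given. If you want to close the gap, two routes are available. One is to run the equality analysis of Sect.~\ref{Sect3.2} with the minimizing sequence taken to be $\gamma_{a_j}\circ\iota$ for the totally geodesic embedding $\iota:\mathbb{S}^m\hookrightarrow\mathbb{S}^n$; the limit $\psi$ is then a minimal isometric immersion of $(\mathbb{S}^m,\tfrac{\lambda_1}{m}g)$ whose image is a M\"obius image of a great sphere, hence a round $m$-sphere, and minimality forces it to be a great sphere, identifying $\tfrac{\lambda_1}{m}g$ with $h_1$ up to a conformal diffeomorphism of $\mathbb{S}^m$. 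The other is to note that equality in (2), combined with $\lambda_1=\cdots=\lambda_m$ from the equality case of Theorem \ref{thm-conf-h}, says that $g$ maximizes $\lambda_1V^{2/m}$ in $[h_1]$, and then quote the Montiel--Ros uniqueness of such maximizers for a metric minimally immersed by first eigenfunctions (the paper cites \cite{MR86} only for the formula of $\gamma_a$, but this is where the rigidity really lives). Either way, the conclusion is $g=kh_1$ only up to a conformal diffeomorphism of $\mathbb{S}^m$, an imprecision already present in the statement itself.
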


\begin{proof}
	(1) The inequality \eqref{eq-4.41} is from Theorem \ref{thm-conf-h} and Theorem \ref{thm-ESI-1.1}.
	Now assume that the equality holds.
	After rescaling, we can assume that $\mathfrak{H}(\tilde{g})=m$. Then $V(M,\tilde{g})=V(M,g)$ and there is a minimal immersion
	$\psi: (M, \tilde{g})\to \mathbb{S}^n$ with $\lambda_1(\tilde{g})=\cdots=\lambda_m(\tilde{g})=m$
	by the case of equality in Theorem \ref{thm-conf-h}.

	By reviewing the proof of Theorem \ref{thm-conf-h}, we can assume $\psi=\gamma_a\circ \phi$ for some $a\in B^{n+1}$. 
	Since $(M, g)$ is not isometric to $(\mathbb{S}^m,h_1)$,
	$\gamma_a$ must be the identity map by Theorem \ref{thm-ESI-1.1} (and Remark \ref{rem-4.1}).
	This follows that $\tilde{g}=\psi^\ast h_1=\phi^\ast h_1=g$.

	(2) This assertion follows from Theorem \ref{thm-conf-h}, just noting that $V_c(\mathbb{S}^m, g)=V_c(\mathbb{S}^m, h_1)=V(\mathbb{S}^m, h_1)$ and $\mathfrak{H}(h_1)=\lambda_1(h_1)=m$.
\end{proof}

\section{Proofs for the Steklov eigenvalues}\label{Sect-4}
In this section, we prove Theorems \ref{thm-FS-h}, \ref{thm-FS-h-2} and \ref{thm-FS-h-3}.
Since the main approaches are the same as in Sect.~\ref{Sect-3}, we will omit common details but emphasize the differences.

\subsection{The inequality in \eqref{eq-FS-h}} \label{Sect4.1}
We have the variational characterization of the Steklov eigenvalue $\sigma_i (i\ge 1)$ (cf. \cites{Xio21, Xio22})
\begin{equation*}
	\sigma_i=\inf_{\stackrel{u\in H^1(M)\setminus \{0\}}{u|_{\partial M}\neq 0}}\left\{\frac{\int_M|\nabla u|^2}{\int_{\partial M} u^2}\right|\left.\int_{\partial M} uv_j=0, j=0, \cdots, i-1\right\},
\end{equation*}
where $\{v_i\}_{i\ge 0}$ is an orthonormal set of Steklov eigenfunctions.

Let $\phi: (M, g)\to B^n$  be a conformal map with $\phi(\partial M)\subset \partial B^n$. By similar arguments as in Sect.~\ref{Sect3.1}, there exists $\gamma\in \confb$ such that
$\Phi=\gamma\circ \phi=(\Phi_1,\cdots,\Phi_n)$ satisfies that
\begin{gather*}
\int_{\partial M}\Phi^A=0,\mbox{ for } A=1,\ldots, n.\\
	\int_{\partial M} \Phi^A v_B=0,  \mbox{ for } 1\le B< A\le n.
\end{gather*}
Let $\hat{\Phi}^A$ be a harmonic extension of $\Phi^A|_{\partial M}$.
Now for each $A=1,\cdots, n$, we have
	\begin{equation*}
		\sigma_{A}\int_{\partial M} (\Phi^A)^2
		\le \int_M|\nabla \hat{\Phi}^A|^2
		\le \int_M|\nabla \Phi^A|^2.
	\end{equation*}
	On the other hand, if we denote $\tilde{g}=\Phi^{\ast}g_{B^n}=fg$, then
	\begin{equation*}
		0\le |\tilde{\nabla} \Phi^A|^2\le 1,\quad \sum_{A=1}^{n}|\tilde{\nabla} \Phi^A|^2=m,\quad \sum_{A=1}^{n}|\nabla\Phi^A|^2=mf,
	\end{equation*}
	where $\tilde{\nabla}$ is the gradient operator on $M$ with respect to $\tilde{g}$.

	An analogous computation as in Sect.~\ref{Sect3.1} shows that
\begin{align*}
	V(\partial M)=\int_{\partial M}1\le{} &\sum_{A=1}^{m-1}\frac{1}{\sigma_{A}}\int_Mf|\tilde{\nabla} \Phi^A|^2+\sum_{A=m}^{n}\frac{1}{\sigma_{A}}\int_Mf|\tilde{\nabla} \Phi^A|^2\\
	\l{} &\sum_{A=1}^{m-1}\frac{1}{\sigma_{A}}\int_Mf|\tilde{\nabla} \Phi^A|^2+\frac{1}{\sigma_{m}}\int_Mf\sum_{A=m}^{n}|\tilde{\nabla} \Phi^A|^2\\
	={} &\sum_{A=1}^{m-1}\frac{1}{\sigma_{A}}\int_Mf|\tilde{\nabla} \Phi^A|^2+\frac{1}{\sigma_{m}}\int_Mf(m-\sum_{A=1}^{m-1}|\tilde{\nabla} \Phi^A|^2)\\
	={} &\sum_{A=1}^{m-1}\frac{1}{\sigma_{A}}\int_Mf|\tilde{\nabla} \Phi^A|^2+\frac{1}{\sigma_{m}}\int_Mf\sum_{A=1}^{m-1}(1-|\tilde{\nabla} \Phi^A|^2)+\frac{1}{\sigma_{m}}\int_Mf\\
	\le{} &\sum_{A=1}^{m-1}\frac{1}{\sigma_{A}}\int_Mf|\tilde{\nabla} \Phi^A|^2+\sum_{A=1}^{m-1}\frac{1}{\sigma_{A}}\int_Mf(1-|\tilde{\nabla} \Phi^A|^2)+\frac{1}{\sigma_{m}}\int_Mf\\
	={} &\sum_{A=1}^{m}\frac{1}{\sigma_{A}}\int_M f.
\end{align*}
Hence, by using the H\"{o}lder inequality, we have
\begin{align}
	\mathfrak{H}(\sigma_1,\cdots,\sigma_m) V(\partial M) &\le m\int_Mf\,dv_g\label{eq-5.10}\\
	&\le m\Bigl(\int_Mf^{m/2}\,dv_g\Bigr)^{2/m}\bigl(V(M)\bigr)^{\frac{m-2}{m}}\nonumber\\
	&= m\Bigl(V(M, \tilde{g})\Bigr)^{2/m}\bigl(V(M)\bigr)^{\frac{m-2}{m}}\nonumber\\
	&\le m\Bigl(\sup_{\gamma\in \confb}V(M, (\gamma\circ\phi)^{\ast}g_{B^n})\Bigr)^{2/m}\bigl(V(M)\bigr)^{\frac{m-2}{m}}.\nonumber
\end{align}
Taking the infimum over all non-degenerate conformal maps $\phi: M\to B^n$ with $\phi(\partial M)\subset \partial B^n$, we obtain the desired inequality.

\subsection{Equality in \eqref{eq-FS-h}}\label{Sect5.2}
Now, we assume the equality holds.
Choose a sequence of conformal maps $\phi_j: M \to B^n$ with $\phi_j(\partial M)\subset \partial B^n$ such that
	\begin{equation*}
		\lim_{j\to \infty}V_{rc}(M,n,\phi_j ) = V_{rc}(M, n),
	\end{equation*}
	and by composing with a conformal transformation of the sphere we may assume
	\begin{equation*}
		\int_{\partial M} \phi_j^A v_B=0,  \mbox{ for } 0\le B< A\le n.
	\end{equation*}
	Denote $mf_j=\sum_{A=1}^{n}|\nabla \phi_j^A|^2$ and write $\mathfrak{H}(\sigma_1,\cdots, \sigma_m)$ as $\mathfrak{H}$ for short.

	Like Sect.~\ref{Sect3.2}, we reproduce the steps in Sect.~\ref{Sect4.1} for each $\phi_j$ and then take the limit. It follows that all the inequalities must be sharp. In summary,
\begin{align*}
	V(\partial M)&=\lim_{j\to \infty}\int_M\sum_{A=1}^{n}(\phi_j^A)^2=\mathfrak{H}^{-1}\lim_{j\to \infty}\int_M \sum_{A=1}^{n}|\nabla \phi_j^A|^2\\
	&=\mathfrak{H}^{-1}\lim_{j\to \infty}\Bigl(\int_M\bigl(\sum_{A=1}^{n}|\nabla \phi_j^A|^2\bigr)^{m/2}\,dv_g\Bigr)^{2/m}\bigl(V(M)\bigr)^{\frac{m-2}{m}}\\
	&=V(\partial M).
\end{align*}

Hence, by passing to a subsequence we can assume that $\{\phi_j^A\}$ converges weakly in $W^{1, m}(M)$, strongly in $L^2(M)$, and pointwise a.e., to a map $\psi^A: M\to \mathbb{R}$.
We have
\begin{gather*}
	\sum_{A=1}^{n}(\psi^A)^2=1, \mbox{ a.e. on $\partial M$};\quad
	\sum_{A=1}^{n}(\psi^A)^2\le 1, \mbox{ a.e. on $M$};\\
	\int_{\partial M} (\phi_j^A)^2
	\le\frac{1}{\sigma_A}\int_M |\nabla \phi_j^A|^2, \mbox{ for $A=1,\cdots, n$};\\
	\lim_{j\to\infty}\sum_{A=1}^{n}\int_{\partial M} (\phi_j^A)^2=\lim_{j\to\infty}\sum_{A=1}^{n}\frac{1}{\sigma_A}\int_M |\nabla \phi_j^A|^2.
\end{gather*}

Similar to Claim 1 in Sect.~\ref{Sect3.2}, we can prove

\textbf{Claim 2.} For each $A$ fixed, $\{\phi_j^A\}$ converges to $\psi^A$ strongly in $W^{1, 2}(M)$; $\psi^A$ belongs to the eigenspace $E_{\sigma_A}$ corresponding to the eigenvalue $\sigma_A$, and then $\psi^A$ is harmonic.

Now we continue the proof. Denote $mf=\sum_{A=1}^n|\nabla\psi^A|^2$.
By taking the limit, we have
	\begin{align}
		V(\partial M)=\int_{\partial M}\sum_{A=1}^n|\psi^A|^2={} &\sum_{A=1}^{m-1}\frac{1}{\sigma_{A}}\int_M |\nabla \psi^A|^2+\sum_{A=m}^{n}\frac{1}{\sigma_{A}}\int_M|\nabla \psi^A|^2\nonumber\\
		={} &\sum_{A=1}^{m-1}\frac{1}{\sigma_{A}}\int_M|\nabla \psi^A|^2+\frac{1}{\sigma_{m}}\int_M\sum_{A=m}^{n}|\nabla \psi^A|^2\label{eq5.7-psi}\\
		={} &\sum_{A=1}^{m-1}\frac{1}{\sigma_{A}}\int_M|\nabla \psi^A|^2+\frac{1}{\sigma_{m}}\int_M  f\sum_{A=1}^{m-1}\bigl(1-\frac{1}{f}|\nabla \psi^A|^2\bigr)+\frac{1}{\sigma_{m}}\int_Mf\nonumber\\
		={}&\sum_{A=1}^{m-1}\frac{1}{\sigma_{A}}\int_M|\nabla \psi^A|^2+\sum_{A=1}^{m-1}\frac{1}{\sigma_{A}}\int_Mf\bigl(1-\frac{1}{f}|\nabla \psi^A|^2\bigr)+\frac{1}{\sigma_{m}}\int_Mf\label{eq5.8-psi}\\
		={} &\sum_{A=1}^{m}\frac{1}{\sigma_{A}}\int_M f\nonumber\\
		={}& \mathfrak{H}^{-1}m\Bigl(\int_Mf^{m/2}\,dv_g\Bigr)^{2/m}\bigl(V(M)\bigr)^{\frac{m-2}{m}}\label{eq-holder-psi}\\
		={}&\mathfrak{H}^{-1}m\Bigl(V_{rc}(M,n)\Bigr)^{2/m}\bigl(V(M)\bigr)^{\frac{m-2}{m}}= V(\partial M).\nonumber
	\end{align}

	The strong convergence in $W^{1,2}(M)$ proves that
	\begin{equation*}
		\tilde{g}:=\psi^{\ast}g_{B^n}=\lim_{j\to\infty}\phi_j^{\ast}g_{B^n}=\lim_{j\to\infty}f_jg=fg,
	\end{equation*}
which means that  $\psi: M\to B^n$ is a conformal map with the conformal factor $f$.
Moreover, $\psi$ is harmonic with $\psi(\partial M)\subset \partial B^n$.

When $m>2$, equality in \eqref{eq-holder-psi} implies $f$ is constant.
Hence, $\psi$ is an isometric immersion after rescaling ($f\equiv 1$ and $\sum_{A=1}^m\frac{1}{\sigma_A}=\frac{V(\partial M)}{V(M)}$), and then it is a minimal immersion since each coordinate function is harmonic.

The next lemma allows us to discuss the equality case further.
\begin{lem}\label{lem-5}
	The equality in \eqref{eq-FS-h} implies $\sigma_1=\dots=\sigma_m$.
\end{lem}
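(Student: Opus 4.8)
The plan is to mirror the closed-manifold analysis of Lemma \ref{lem-3.4}: run the limiting argument behind Claim 2, saturate the chain \eqref{eq5.7-psi}--\eqref{eq5.8-psi}, read off the resulting equality conditions, and then exclude the single configuration compatible with $\sigma_1<\sigma_m$. From Claim 2 each $\psi^A$ is a Steklov eigenfunction ($\Delta\psi^A=0$, $\partial_\nu\psi^A=\sigma_A\psi^A$), and $\psi\colon(M,\tilde g)\to B^n$ is a conformal harmonic (hence minimal) immersion with $\tilde g=fg$, $mf=\sum_A|\nabla\psi^A|^2$, and $\psi(\partial M)\subset\partial B^n$. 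Inspecting which inequalities were used, equality in \eqref{eq5.7-psi} forces, for each $A\ge m+1$, either $\sigma_A=\sigma_m$ or $\psi^A\equiv0$; equality in \eqref{eq5.8-psi} forces, for each $A\le m-1$, either $\sigma_A=\sigma_m$ or $|\tilde\nabla\psi^A|^2\equiv1$. Call a coordinate with $|\tilde\nabla\psi^A|^2\equiv1$ \emph{flat}. If no coordinate is flat, then $\sigma_A=\sigma_m$ for all $A\le m-1$ and we are done; so the whole problem reduces to excluding a flat coordinate with $\sigma_A<\sigma_m$.

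The key point, absent in the closed case, is that a flat coordinate is very rigid. Indeed $|\tilde\nabla\psi^A|^2\equiv1$ means the constant ambient vector $e_A$ is everywhere tangent to $\psi(M)$, so $\nabla\psi^A=e_A$ and $\mathrm{Hess}\,\psi^A\equiv0$; thus $\nabla\psi^A$ is a parallel unit field and, by the de Rham decomposition, $(M,\tilde g)$ splits as a Riemannian product $J\times N'$ in which $\psi^A$ is an affine coordinate on the interval $J$ and every other coordinate $\psi^B$ depends on the $N'$-factor alone. Since each $\psi^B$ is harmonic it is harmonic on $N'$; when $N'$ is closed this forces $\psi^B$ to be constant, so the image of $\psi$ collapses to a segment, contradicting that $\psi$ immerses $M^m$ with $m\ge2$. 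This is the mechanism by which flat coordinates are ruled out, and it shows that the only surviving possibility is the fully flat, totally geodesic case, in which $M$ is a round ball and all $\sigma_A$ coincide.

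For $m>2$ I would present a cleaner analytic shortcut in parallel. Since $\psi$ is isometric on $(M,\tilde g)$ and $\psi|_{\partial M}$ is isometric into the round sphere $\partial B^n$, summing squared gradients gives $\sum_A|\nabla\psi^A|^2=mf$ on $M$ and $\sum_A|\nabla^{\partial M}\psi^A|^2=(m-1)f$ on $\partial M$; splitting $|\nabla\psi^A|^2=|\nabla^{\partial M}\psi^A|^2+(\partial_\nu\psi^A)^2$ and using the Steklov relation yields
\[
\sum_A(\psi^A)^2=1,\qquad \sum_A\sigma_A^2(\psi^A)^2=\sum_A(\partial_\nu\psi^A)^2=f\quad\text{on }\partial M.
\]
As $m>2$, equality in \eqref{eq-holder-psi} makes $f$ constant, and after rescaling $f\equiv1$ the two identities combine into the \emph{moment identity} $\sum_A(\sigma_A^2-1)(\psi^A)^2\equiv0$ on $\partial M$; this plays exactly the role that the spherical bound $|\tilde\nabla\Phi^A|^2\le1-(\Phi^A)^2$ played in Lemma \ref{lem-3.4}. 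Feeding the equality conditions into it, together with $mf=\sum_A|\nabla\psi^A|^2$, should let one conclude that a flat coordinate cannot carry an eigenvalue below $\sigma_m$, and hence $\sigma_1=\cdots=\sigma_m$.

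I expect the flat case to be the genuine obstacle, for two reasons. First, the Euclidean ball only supplies the weak bound $|\tilde\nabla\psi^A|^2\le1$, not the spherical $1-(\psi^A)^2$, so flatness cannot be excluded for free as in the Neumann argument; the rigidity above is what must replace it. Second, the splitting argument is clean only when the complementary factor $N'$ is closed—when $\partial N'\neq\emptyset$ one must additionally invoke $\psi(\partial M)\subset\partial B^n$ (and, once available, the free-boundary condition), while in the borderline dimension $m=2$ the factor $f$ need not be constant, so the moment identity degenerates and the product/rigidity route has to carry the whole argument. These are precisely the points where the careful analysis advertised in the introduction is really needed.
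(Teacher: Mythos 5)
Your setup tracks the paper's: you extract the same equality conditions from \eqref{eq5.7-psi} and \eqref{eq5.8-psi}, you identify the same splitting of $(M,\tilde g)$ induced by a coordinate with $|\tilde\nabla\psi^A|^2\equiv1$, and your ``moment identity'' $\sum_s\sigma_s^2(\psi^s)^2=f$ on $\partial M$ (from $\nu=f^{-1/2}(\sigma_1\psi^1,\dots,\sigma_n\psi^n)$ being a unit vector) is exactly the paper's key boundary relation. But the step that actually proves the lemma is missing, and the mechanism you propose in its place is wrong. You try to \emph{exclude} a flat coordinate by arguing that the complementary factor $N'$ is closed, so the remaining harmonic coordinates are constant and the image collapses. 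The factor $N'$ is not closed in general, and more importantly the flat case is not a contradiction to be excluded: for the round ball $B^m\subset\mathbb{R}^m$ (a genuine equality case of \eqref{eq-ext-s} and the model for (C2) when $n=m$) \emph{every} coordinate is flat and nothing collapses. So no argument can rule flat coordinates out; what must be shown is that in the flat case one still gets $\sigma_A=\sigma_m$. Your closing sentences (``should let one conclude\dots'', ``the product/rigidity route has to carry the whole argument'') assert this without proof, and that assertion is the entire content of the lemma in the hard case.

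The paper closes the gap as follows. For $m\ge3$ (with $f\equiv1$ after rescaling and $M$ splitting locally as $I\times\Sigma$ with $\psi^1=t$ and $\psi^s=\psi^s(u^2,\dots,u^m)$ for $s\ge2$), it combines $\sum_s(\psi^s)^2=1$ and $\sum_s\sigma_s^2(\psi^s)^2=1$ on $\partial M$ into $\sum_{s\ge2}(\sigma_s^2-\sigma_1^2)(\psi^s)^2=1-\sigma_1^2$, then applies $\Delta^{\Sigma}$ and uses $\Delta^{\Sigma}\psi^s=0$ (harmonicity plus independence of $t$) to get $\sum_{s\ge2}(\sigma_s^2-\sigma_1^2)|\nabla^{\Sigma}\psi^s|^2=0$; since the coordinates with $\nabla\psi^s\equiv0$ were discarded at the outset, all the coefficients must vanish. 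You wrote down the right identity but never differentiated it. The cases $n=m$ (where $\psi|_{\partial M}$ normal to $\partial B^n$ forces $\nu$ parallel to $\psi$, hence all $\sigma_i$ equal) and $m=2$ (where $f$ need not be constant and one instead shows the flat case makes $\psi(M)$ a planar disk) require the separate arguments you flag but do not supply.
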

\begin{proof}
	If $\nabla \psi^s\equiv 0$ on $M$ for some $s (1\le s\le n)$, then $\psi(M)\subset B^{n-1}$ and $\psi(\partial M)\subset \partial B^{n-1}$.
	So without loss of generality we can assume that $\nabla \psi^s\not\equiv 0$ on $M$ for each $s$.
	Then $\psi^s\not\equiv 0$ on $\partial M$ for each $s$ since $\sigma_s \int_{\partial M}|\psi^s|^2=\int_M |\nabla \psi^s|^2$.
	Moreover, we immediately obtain $\sigma_s=\sigma_m$ for $s\ge m+1$ from \eqref{eq5.7-psi}.
	We denote the Levi-Civita connection with respect to the induced metric $\tilde{g}=\psi^{\ast}h_0$ by $\tilde{\nabla}$, then $\psi: (M, \tilde{g})\to \mathbb{R}^n$ is an isometric immersion.

	Since $\nu=(\tilde{\nabla}_{\nu} \psi^1, \dots, \tilde{\nabla}_{\nu} \psi^n)=\frac{1}{\sqrt{f}}(\nabla_{\nu} \psi^1, \dots, \nabla_{\nu} \psi^n)=\frac{1}{\sqrt{f}}(\sigma_1\psi^1,\dots,\sigma_n\psi^n)$ and $\nu$ is a unit vector, we have
	\begin{equation}\label{eq-lem4.1-1}
		\sum_{s=1}^{n}\sigma_s^2(\psi^s)^2=f  \mbox{ on $\partial M$}.
	\end{equation}

	Noting \eqref{eq5.8-psi}, if $|\nabla \psi^1|^2\not \equiv f$ on $M$, then $\sigma_1=\sigma_m$.

	If $|\nabla \psi^1|^2 \equiv f$ on $M$,
	then when restricted to $\partial M$,
	we have
	\begin{align}
		f=|\nabla \psi^1|^2&=|\nabla^{\partial M} \psi^1|^2+|\nabla_{\nu} \psi^1|^2\nonumber\\
		&=f|\tilde{\nabla}^{\partial M} \psi^1|^2+\sigma_1^2(\psi^1)^2\nonumber\\
		&\le f(1-(\psi^1)^2)+\sigma_1^2(\psi^1)^2,\label{eq-lem4.1-2}
	\end{align}
	which implies
	\begin{equation}\label{eq-lem4.1-3}
	\sigma_1^2\ge f  \mbox{ on $\partial M$},
	\end{equation}
	where \eqref{eq-lem4.1-2} holds since $\psi: (\partial M, \tilde{g})\to \partial B^n=S^{n-1}$ is an isometric immersion.
	On the other hand, we have
	\begin{equation}\label{eq-lem4.1-4}
	\sum_{s=1}^{n}(\psi^s)^2=1  \mbox{ on $\partial M$}.
	\end{equation}
	Now from \eqref{eq-lem4.1-1}, \eqref{eq-lem4.1-3} and \eqref{eq-lem4.1-4} we obtain
	$\sigma_1=\cdots=\sigma_m$ since we have assumed $\psi^s\not\equiv 0$ on $\partial M$.
	Moreover, we see that $f=\sigma_1$ is constant on $\partial M$.
\end{proof}

By scaling the metric we assume that $\sigma_1=1$, then due to Lemma \ref{lem-5} we have $\frac{\partial \psi}{\partial \nu}=\psi$ on $\partial M$, which follows that $\psi(M)$ meets $\partial B^n$ orthogonally along $\psi(\partial M)$ and $\psi$ is an isometry on $\partial M$.

\subsection{Proofs of Theorems \ref{thm-FS-h-2} and \ref{thm-FS-h-3}}
As pointed out in the proof of \cite{FS11}*{Theorem 2.3}, for any compact surface $M$ with boundary,
there exists a conformal branched cover  $\phi: M\to D$ with $\deg(\phi)\le \genus(M)+k$ (cf. \cites{Ahl50, Gab06}).

Since $\phi$ is proper we have $\phi(\partial M)\subset \partial D$.
For the same reason as in Sect.~\ref{Sect4.1},
the coordinate functions $\phi^1,\phi^2$ can be used as the test functions.
Hence, we have (cf. \eqref{eq-5.10})
\begin{equation}\label{eq-4.38}
	\mathfrak{H}(\sigma_1,\sigma_2) V(\partial M) \le \int_M\sum_{i=1}^2|\nabla \phi^i|^2\,dv_g
\end{equation}
Since $\phi$ is conformal and $\dim M=2$, the right side of \eqref{eq-4.38} equals to $2V(\phi(M))$
and bounded by $2(\genus(M)+k)\pi$ from above. This proves Theorem \ref{thm-FS-h-2}.

To prove Theorem \ref{thm-FS-h-3}, we just notice that the stereographic projection from $\mathbb{R}^2$ to $(\mathbb{S}^2,h_1)$ is conformal and then consider the conformal map $\psi: M \to (\mathbb{S}^2,h_1)$.
By using the coordinate functions $\psi^1,\psi^2,\psi^3$ (in $\mathbb{R}^3$) as the test functions, we have
\begin{align*}
	\mathfrak{H}(\sigma_1,\sigma_2) V(\partial M) &\le \int_M\sum_{i=1}^3|\nabla \psi^i|^2\,dv_g=2V(\psi(M))\\
	&<2 V(\mathbb{S}^2,h_1)\deg(\psi)\le 8\pi (\genus(M)+1).
\end{align*}
We complete the proof.

\section{Proofs of Reilly inequalities in space forms}\label{Sect-5}
In this section, we prove Theorem \ref{thm1.1} and its general version.

\subsection{The inequality in \eqref{eq_thm1}}\label{Sect-5.1}
For the immersion $x: M\to\R$, by analogous arguments as in Sect.~\ref{Sect3.1},
there exists a regular conformal map $\Gamma: \R\to \S\subset\mathbb{R}^{n+1}$ such that  the immersion $\Phi=\Gamma\circ x=(\Phi^1, \cdots, \Phi^{n+1})$ satisfies that
	\begin{equation*}
		\int_M \Phi^A u_B=0,  \mbox{ for } 0\le B< A\le n+1.
	\end{equation*}
Set $\Gamma^{\ast}h_1=e^{2\rho}h_c$, then $g_M=x^{\ast}h_c$, $\tg_M=(\Gamma\circ x)^{\ast}h_1.$

	By the variational characterization \eqref{eq-vc},
for each $A=1,\cdots, n+1$, we have
	\begin{equation*}
		\lambda_{A}\int_M (\Phi^A)^2\le \int_M|\nabla \Phi^A|^2
		=\int_M e^{2\rho}|\tilde{\nabla}\Phi^A|^2,
	\end{equation*}
	An analogous computation as in Sect.~\ref{Sect3.1} shows that
	\begin{align}
		V(M)=\int_M1\le{} &\sum_{A=1}^{m}\frac{1}{\lambda_{A}}\int_Me^{2\rho}|\tilde{\nabla}\Phi^A |^2+\sum_{A=m+1}^{n+1}\frac{1}{\lambda_{A}}\int_Me^{2\rho}|\tilde{\nabla}\Phi^A |^2\nonumber\\
		\le{} &\sum_{A=1}^{m}\frac{1}{\lambda_{A}}\int_Me^{2\rho}|\tilde{\nabla}\Phi^A |^2+\frac{1}{\lambda_{m}}\int_Me^{2\rho}\sum_{A=m+1}^{n+1}|\tilde{\nabla}\Phi^A |^2\nonumber\\
		={} &\sum_{A=1}^{m}\frac{1}{\lambda_{A}}\int_Me^{2\rho}|\tilde{\nabla}\Phi^A |^2+\frac{1}{\lambda_{m}}\int_Me^{2\rho}\sum_{A=1}^{m}(1-|\tilde{\nabla}\Phi^A |^2)\nonumber\\
		\le{} &\sum_{A=1}^{m}\frac{1}{\lambda_{A}}\int_Me^{2\rho}|\tilde{\nabla}\Phi^A |^2+\sum_{A=1}^{m}\frac{1}{\lambda_{A}}\int_Me^{2\rho}(1-|\tilde{\nabla}\Phi^A |^2)\label{eq3.8}\\
		={} &\sum_{A=1}^{m}\frac{1}{\lambda_{A}}\int_M e^{2\rho}.\nonumber
	\end{align}

	Hence, by using \eqref{eq-prop-3} we derive that
	\begin{equation*}
		\mathfrak{H}(\lambda_1,\cdots,\lambda_m)\le \frac{m}{V(M)}\int_M e^{2\rho}
		\le \frac{m}{V(M)}\int_M \Bigl(c+|\mathbf{H}|^2\Bigr).
	\end{equation*}

\subsection{Equality in \eqref{eq_thm1}}\label{Sect-5.3}
Firstly, we discuss the necessary conditions for equality in \eqref{eq_thm1}.

When $M$ is minimally immersed in $\mathbb{S}^n$, we know that $\lambda=m$ is an eigenvalue and its multiplicity is not less than $m$ since $\Delta x=-mx$.
Hence, $\mathfrak{H}(\lambda_1,\cdots,\lambda_m)=\frac{m}{V(M)}\int_M(c+|\mathbf{H}|^2)=m$ implies $\lambda_1=\dots=\lambda_m=m$, i.e., the immersion $x$ is given by a subspace of the first eigenspace.

When $c\neq 1$ or $M$ is not minimal in $\mathbb{S}^n$, following the proof of Lemma \ref{lem-3.4}, equality in \eqref{eq3.8} implies $\lambda_1=\dots=\lambda_m$.
This makes us reduce the equality case in \eqref{eq_thm1} to the equality case in \eqref{eq-Rei} and complete the proof for the necessity.

Next, we discuss the sufficient conditions for equality in \eqref{eq_thm1}. 
We already have
\begin{equation*}
	\lambda_1\le \mathfrak{H}(\lambda_1,\cdots,\lambda_m)\le \frac{m}{V(M)}\int_M(c+|\mathbf{H}|^2),
\end{equation*}

Noting Theorem \ref{thm-Rei}, from either case (1) or case (2) we can derive
\begin{equation*}
	\lambda_1= \frac{m}{V(M)}\int_M(c+|\mathbf{H}|^2),
\end{equation*}
so equality in \eqref{eq_thm1} is attained. This proves the sufficiency.

\subsection{A general version}
When the ambient is not the space form, we prove the following theorem, which can recover \cite{ESI92}*{Theorem 2}.

\begin{thm}\label{thm-3.6}
	Let $(N, g_N)$ be an $n$-dimensional Riemannian manifold (possibly not complete) which admits a conformal immersion in the sphere $(\mathbb{S}^n, h_1)$. Then, for any $m$-dimensional closed submanifold $M$ immersed in $(N, g_N)$ by $\phi$, we have
	\begin{equation*}
		\mathfrak{H}(\lambda_1,\cdots,\lambda_m)\le \frac{m}{V(M)}\int_M(|\mathbf{H}_{\phi}|^2+\bar{\mathcal{R}}_{\phi}),
	\end{equation*}
	where $\bar{\mathcal{R}}$ is defined by \eqref{eq-prop-2}.

	Moreover, equality holds if and only if $\lambda_1=\cdots=\lambda_m$ and $|\mathbf{H}_{\phi}|^2+\bar{\mathcal{R}}_{\phi}$ equals the constant $\lambda_1/m$,
\end{thm}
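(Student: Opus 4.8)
The plan is to mimic the space-form argument of Sect.~\ref{Sect-5.1}, replacing the immersion $x:M\to\R$ and the formula \eqref{eq-prop-3} by the given immersion $\phi:M\to(N,g_N)$ together with the general conformal relation \eqref{eq-prop-1} of Proposition~\ref{prop3.2}. First I would fix a conformal immersion $\Gamma:(N,g_N)\to(\mathbb{S}^n,h_1)$, which exists by hypothesis, and consider $\Gamma\circ\phi:M\to\mathbb{S}^n\subset\mathbb{R}^{n+1}$. Exactly as in Lemma~\ref{lem4.1} and Sect.~\ref{Sect3.1}, after composing with a suitable $\gamma\in\confs$ (Hersch balancing followed by the Gram--Schmidt/QR step) I may assume that $\Phi=\gamma\circ\Gamma\circ\phi=(\Phi^1,\dots,\Phi^{n+1})$ satisfies $\int_M\Phi^Au_B=0$ for $0\le B<A\le n+1$, so each $\Phi^A$ is admissible in \eqref{eq-vc}. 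Writing $(\gamma\circ\Gamma)^{\ast}h_1=e^{2\rho}g_N$, the induced metric satisfies $\Phi^{\ast}h_1=e^{2\rho}g_M$ on $M$, and the sphere identities $\sum_A(\Phi^A)^2=1$, $\sum_A|\tilde{\nabla}\Phi^A|^2=m$, $|\nabla\Phi^A|^2=e^{2\rho}|\tilde{\nabla}\Phi^A|^2$ hold just as before.

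Reproducing the chain of inequalities of Sect.~\ref{Sect-5.1} verbatim (culminating in \eqref{eq3.8}) then yields $V(M)\le\sum_{A=1}^{m}\frac{1}{\la_A}\int_Me^{2\rho}$, that is, $\mathfrak{H}(\la_1,\dots,\la_m)\le\frac{m}{V(M)}\int_Me^{2\rho}$. To pass from $e^{2\rho}$ to the curvature quantity I would integrate \eqref{eq-prop-1}: since $M$ is closed $\int_M\Delta\rho=0$, while the remaining terms $-\frac{m-2}{m}|\nabla\rho|^2-|(\bn\rho)^{\bot}-\mathbf{H}_\phi|^2$ are pointwise nonpositive for $m\ge2$, giving $\int_Me^{2\rho}\le\int_M(|\mathbf{H}_\phi|^2+\bar{\mathcal{R}}_\phi)$. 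This proves the asserted inequality.

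For the equality statement the sufficiency is immediate: if $\la_1=\dots=\la_m$ and $|\mathbf{H}_\phi|^2+\bar{\mathcal{R}}_\phi\equiv\la_1/m$, then both sides equal $\la_1$. The substance is the necessity, and here—unlike Theorems~\ref{thm-conf-h} and \ref{thm-FS-h}—no compactness/minimizing-sequence argument is required, since $\phi$ is fixed and the upper bound is explicit in $\phi$; I only renormalize the fixed $\Gamma$ by a single $\gamma$. Assuming equality, every inequality in the chain is saturated. As in the proof of Lemma~\ref{lem-3.4}, saturating the first step forces each $\Phi^A\in E_{\la_A}$; saturating \eqref{eq3.8}, together with the impossibility of $|\tilde{\nabla}\Phi^A|^2\equiv1$ on the sphere, forces $\la_1=\dots=\la_m$; and the corresponding step for the high indices forces every nonvanishing $\Phi^A$ to lie in $E_{\la_1}$. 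Using $\Delta\Phi^A=-\la_A\Phi^A$ and $\sum_A(\Phi^A)^2=1$ (as in \eqref{eq3.13}) I would then compute
\[
me^{2\rho}=\sum_{A=1}^{n+1}|\nabla\Phi^A|^2=\sum_{A=1}^{n+1}\la_A(\Phi^A)^2=\la_1\sum_{A=1}^{n+1}(\Phi^A)^2=\la_1,
\]
so that $e^{2\rho}\equiv\la_1/m$ is constant and hence $\nabla\rho\equiv0$, $\Delta\rho\equiv0$ on $M$.

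Feeding $e^{2\rho}=\la_1/m$ and $\nabla\rho\equiv0$ back into \eqref{eq-prop-1} yields the pointwise identity $|\mathbf{H}_\phi|^2+\bar{\mathcal{R}}_\phi=\frac{\la_1}{m}+|(\bn\rho)^{\bot}-\mathbf{H}_\phi|^2\ge\frac{\la_1}{m}$; integrating and using that the overall equality forces $\int_M(|\mathbf{H}_\phi|^2+\bar{\mathcal{R}}_\phi)=\int_Me^{2\rho}=\frac{\la_1}{m}V(M)$ shows $\int_M|(\bn\rho)^{\bot}-\mathbf{H}_\phi|^2=0$, whence $(\bn\rho)^{\bot}=\mathbf{H}_\phi$ and $|\mathbf{H}_\phi|^2+\bar{\mathcal{R}}_\phi\equiv\la_1/m$, as required. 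The step needing the most care is this final passage from an integral to a pointwise conclusion: it hinges on first upgrading the variational equality to the \emph{constancy} of $e^{2\rho}$ (via the computation above), after which \eqref{eq-prop-1} delivers the pointwise identity automatically.
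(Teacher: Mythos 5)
Your proposal is correct and follows essentially the same route as the paper: the inequality is the Sect.~\ref{Sect-5.1} chain with \eqref{eq-prop-1} substituted for \eqref{eq-prop-3} in the last step (integrating away $\Delta\rho$ and discarding the nonpositive terms), and the equality analysis is exactly the argument of Lemma~\ref{lem-3.4} applied to the fixed map $\Phi=\gamma\circ\Gamma\circ\phi$, yielding $\lambda_1=\cdots=\lambda_m$ and $e^{2\rho}\equiv\lambda_1/m$, after which \eqref{eq-prop-1} and the saturated integral inequality give the pointwise identity. The paper omits these details, and your write-up supplies them correctly, including the point that no minimizing-sequence/compactness argument is needed here.
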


\begin{proof}
	The inequality follows from the same arguments as in Sect.~\ref{Sect-5.1} (just using \eqref{eq-prop-1} instead of \eqref{eq-prop-3} in the last step). 
	The sufficiency for the equality is obvious.
	The necessary for the equality is due to Lemma \ref{lem-3.4}. We omit the details.
\end{proof}

\section{Proofs of extrinsic estimates in the Euclidean space}\label{Sect6}
In this section, we prove Reilly-type inequalities for submanifolds of $\mathbb{R}^n$.
\begin{proof}[Proof of Theorem \ref{thm-ext-c}]
When $M$ is a submanifold of $\mathbb{R}^n$, without lost of generality, we assume that the coordinate functions satisfy
\begin{equation*}
	\int_{M}x^A=0 \mbox{ for $A=1,\dots, n$}.
\end{equation*}
We can assume further
\begin{equation*}
	\int_{M}x^Au_B= 0 \mbox{ for $1\le B<A \le n$}
\end{equation*}
by using the QR-decomposition.
We claim
\begin{equation}\label{eq-5.34}
	\mathfrak{H}(\lambda_1,\cdots,\lambda_m) \int_{M} |x|^2 \le m V(M).
\end{equation}
Indeed, \eqref{eq-5.34} can be derived from \eqref{eq3.8}, just noting that the conformal factor $e^{2\rho}\equiv 1$ since we don't need the conformal transformation,
and replacing the left side $V(M)=\int_{M}1$ by $\int_{M} |x|^2$ since $M$ may be not in a sphere.

Multiplying the both sides of \eqref{eq-5.34} by $\int_M|H_T|^2$, we have
\begin{align}
	m V(M)	\int_M|H_T|^2& \ge \mathfrak{H}(\lambda_1,\cdots,\lambda_m) \int_{M} |x|^2\int_M|H_T|^2\nonumber\\
	&\ge \mathfrak{H}(\lambda_1,\cdots,\lambda_m)\Bigl(\int_M|x|\cdot |H_T| \Bigr)^2\label{eq-6.2}\\
	&\ge \mathfrak{H}(\lambda_1,\cdots,\lambda_m)\Bigl(\int_M\l x, H_T\r \Bigr)^2\label{eq-6.3}\\
	&=  \mathfrak{H}(\lambda_1,\cdots,\lambda_m)\Bigl(\int_M\tr T \Bigr)^2,\nonumber
\end{align}
where we used the H\"{o}lder inequality, the Cauchy-Schwarz inequalities, and the Hsiung--Minkowski formula \eqref{eq-HM}.

\textbf{Equality case: necessity.} If equality in \eqref{eq-ext-c} holds, then \eqref{eq-5.34} is sharp, which implies $\lambda_1=\dots=\lambda_m$ (cf.~Lemma \ref{lem-3.4}).
Moreover, we have $\Delta x=-\lambda_1 x$.
By Takahashi's theorem again, we conclude that $M$ is actually a minimal submanifold in a hypersphere $\mathbb{S}^{n-1}(r)$ with the radius $r=\sqrt{m/\lambda_1}$.

Equalities in \eqref{eq-6.2} and \eqref{eq-6.3} imply $H_T$ is proportional to $x$.
Suppose $H_T=kx$ for some constant $k\neq 0$, then $|H_T|=|k||x|=|k|r$.
From \eqref{eq-2.21} we derive $\tr T=-\l x, H_T\r =-k r^2$ is also constant.

Let $\{e_{m+2}, \cdots, e_{n}\}$ be a normal frame of the immersion from $M$ to $\mathbb{S}^{n-1}(r)$, then $\{e_{m+1}=\frac{x}{r}, e_{m+2}, \cdots, e_{n}\}$ forms a normal frame of $M$ in $\mathbb{R}^n$.
Since $H_T=kx$ and
\begin{equation}\label{eq-6.5}
	H_T=\sum_{i,j,\alpha}T_{ij}h_{ij}^{\alpha}=\sum_{i,j}T_{ij}h_{ij}^{m+1}e_{m+1}+\sum_{i, j; \alpha\ge m+2}T_{ij}h_{ij}^{\alpha}e_{\alpha},
\end{equation}
we conclude that $H_T':=\sum\limits_{i, j; \alpha\ge m+2}T_{ij}h_{ij}^{\alpha}e_{\alpha}$
must vanish, i.e., $M$ is $T$-minimal in $\mathbb{S}^{n-1}(r)$.

\textbf{Equality case: sufficiency.}
Conversely, if $M$ is minimal and $T$-minimal  in $\mathbb{S}^{n-1}(r)$, then $\lambda_1=\dots=\lambda_m$ and \eqref{eq-6.5} becomes
\begin{equation*}
	H_T=\sum_{i,j}T_{ij}h_{ij}^{m+1}e_{m+1}=-\sum_{i,j}T_{ij}\frac{1}{r}\delta_{ij}e_{m+1}=-\tr T \frac{1}{r^2}x,
\end{equation*}
hence, $|H_T|=\frac{1}{r}\tr T$. Since $r^2=m/\lambda_1$ and  $\tr T$ is constant, one can easily check that equality in \eqref{eq-ext-c} holds.
\end{proof}

\begin{proof}[Proof of Theorem \ref{thm-ext-c-sphere}]
	Since $x: M\to \mathbb{S}^n\subset \mathbb{R}^{n+1}$, we treat $M$ as a submanifold of $\mathbb{R}^{n+1}$.
	Let $H_T$ and $H_T'$ are associated to $M\to \mathbb{S}^n$ and $M\to \mathbb{R}^{n+1}$, respectively. Then \eqref{eq-ext-c-sphere} is immediately obtained from Theorem \ref{thm-ext-c}, just noticing the following relations:
	\begin{equation*}
		H_T'=H_T-(\tr T) x,\quad  |H_T'|^2=|H_T|^2+(\tr T)^2.
	\end{equation*} 

\textbf{Equality case:} 
The sufficiency can be verified by using Takahashi's theorem, we omit the details.
Next, we check the necessity.

Since $\tr T\not\equiv 0$ implies $H_T'\not\equiv 0$, equality in \eqref{eq-ext-c-sphere} implies
$\tr T$ is constant, and $M$ is minimal and $T$-minimal in a hypersphere $S'$ of $\mathbb{R}^{n+1}$ by Theorem \ref{thm-ext-c}.

If $S'(r)$ is just $\mathbb{S}^n$, then the immersion is given by a subspace of the first eigenspace. This is the first case.

Otherwise, $M$ lies in $\Sigma_1:=S'\cap \mathbb{S}^n$. We know that $\Sigma_1$ is a geodesic sphere of $\mathbb{S}^n$.
Since $M$ is minimal and $T$-minimal in $S'$, $M$ is also minimal and $T$-minimal in $\Sigma_1$. The geodesic radius of $\Sigma_1$ can be determined by the Gauss equation. This is the second case.
\end{proof}

\begin{proof}[Proof of Theorem \ref{thm-ext-s}]
The proof is similar the proof of Theorem \ref{thm-ext-c},
but there are two main differences: one is that we should apply the Hsiung-Minkowski formula for $\partial M$ instead of $M$ since $\partial M$ is closed;
the other one is that $m=n$ is allowed when considering Steklov eigenvalues.

We assume that the coordinate functions satisfy
\begin{equation*}
	\int_{\partial M}x^A=0 \mbox{ for $A=1,\dots, n$}.
\end{equation*}

We claim
\begin{equation}\label{eq-5.35}
	\mathfrak{H}(\sigma_1,\cdots,\sigma_m) \int_{\partial M} |x|^2 \le m V(M).
\end{equation}

When $n>m$, we assume further
\begin{equation*}
	\int_{{\partial M}}x^Av_B= 0 \mbox{ for $1\le B<A \le n$}
\end{equation*}
by using the QR-decomposition.
Then \eqref{eq-5.35} is obtained by \eqref{eq-5.10}, just setting $f\equiv 1$ and replacing $V(\partial M)=\int_{\partial M}1$ by $\int_{\partial M} |x|^2$.

When $n=m$, this means $M$ is a domain of $\mathbb{R}^m$.
We use the following characterization for the inverse trace of the Steklov eigenvalues (see \cite{Ban80}*{pp.~99}):

\begin{align*}
	\sum_{i=1}^m\frac{1}{\sigma_i}={}
	&\max\left\{
		\sum_{i=1}^{m}\int_{\partial M}u_i^2\right|u_i\in W^{1,2}(M), \int_{M} \nabla u_i\nabla u_j=\delta_{ij}, \int_{M}\nabla u_i \nabla v_0=0, \\
		&{}\qquad \quad \left.\int_{\partial M}u_i=0 \mbox{ for $1\le i,j\le m$} \right\}.
\end{align*}
Noticing that the eigenfunction $v_0$ is constant, we take $u_i=x^i\big/\sqrt{V(M)}$, then
\begin{equation*}
	\sum_{i=1}^m\frac{1}{\sigma_i}\ge \sum_{i=1}^m \int_{\partial M} \Bigl(\frac{x^i}{\sqrt{V(M)}}\Bigr)^2=\frac{1}{V(M)}\int_{\partial M} |x|^2,
\end{equation*}
which is equivalent to \eqref{eq-5.35}.

Multiplying the both sides of \eqref{eq-5.35} by $\int_{\partial M}|H_T|^2$, we have
\begin{align}
	m V(M)	\int_{\partial M}|H_T|^2& \ge \mathfrak{H}(\sigma_1,\cdots,\sigma_m) \int_{\partial M} |x|^2\int_{\partial M}|H_T|^2\label{eq-5.36}\\
	&\ge \mathfrak{H}(\sigma_1,\cdots,\sigma_m)\Bigl(\int_{\partial M}|x|\cdot |H_T| \Bigr)^2\label{eq-5.37}\\
	&\ge \mathfrak{H}(\sigma_1,\cdots,\sigma_m)\Bigl(\int_{\partial M}\l x, H_T\r \Bigr)^2\label{eq-5.38}\\
	&=  \mathfrak{H}(\sigma_1,\cdots,\sigma_m)\Bigl(\int_{\partial M}\tr T \Bigr)^2.\nonumber
\end{align}

\textbf{Equality case: necessity.} Equality in \eqref{eq-ext-s} implies $H_T=kx$ on $\partial M$ for some constant $k\neq 0$ and each coordinate function $x^A$ is an eigenfunction. Since $H_T$ is normal to $\partial M$, so is $x$.
This implies both $|x|$ and $|H_T|$ are constant on any connected components of $\partial M$.
Hence, $x$ maps each connected component of $\partial M$ into a hypersphere of $\mathbb{R}^n$.

Since the coordinate functions are eigenfunctions, they are harmonic, and then $x(M)$ is minimal (cf.~\eqref{eq_d2x}).

Analogous to Lemma \ref{lem-5}, we also have $\sigma_1=\dots=\sigma_m$.
This means $x(\partial M)$ lies in the same hypersphere of radius $\frac{1}{\sigma_1}$.

Since $\Delta |x|^2= 2m$ in $M$ and $|x|^2= 1/\sigma_1^2$ on $\partial M$,
we have $|x|^2\le 1/\sigma_1^2$ by the maximum principle.
Hence, $x(M)\subset B^n(1/\sigma_1)$.

When $n>m$, $H_T=kx$ on $\partial M$ implies that $\partial M$ is $T$-minimal in $\partial B^n(\frac{1}{\sigma_1})$.
We also have that $x(M)$ meets $\partial B^n(\frac{1}{\sigma_1})$ orthogonally along the boundary.

When $n=m$, we derive $M=B^m(\frac{1}{\sigma_1})$. It follows from \eqref{eq-2.21} that $\tr T=-\langle x, H_T\rangle=-k|x|^2=-k/\sigma_1^2$ on $\partial M$, which is constant.

\textbf{Equality case: sufficiency.}
Conversely, when $n>m$, if $x$ immerses $M$ minimally in $B^n(\frac{1}{\sigma_1})$ with $x(\partial M)\subset \partial B^n(\frac{1}{\sigma_1})$ such that $x(M)$ meets $\partial B^n(\frac{1}{\sigma_1})$ orthogonally along $x(\partial M)$, then each coordinate function is the Steklov eigenfunction associated to $\sigma_1$, so \eqref{eq-5.35} (i.e., \eqref{eq-5.36}) becomes equality.
Additionally, if $\partial M$ is $T$-minimal in $\partial B^n(\frac{1}{\sigma_1})$,
then $H_T$ is parallel to $x$, so both \eqref{eq-5.37} and \eqref{eq-5.38} become equalities.
Hence, equality in \eqref{eq-ext-s} holds.

When $n=m$, we know that $\sigma_1=\dots=\sigma_m=1/r$ for a ball $B^m(r)$.
On the boundary $\partial B^m(r)$ we have $H_T=-\tr T \frac{1}{r}x$ on.
If $\tr T$ is constant, then $|H_T|^2=|\tr T|^2$ is also constant.
Hence, equality in \eqref{eq-ext-s} holds due to the relation $mV(B^m(r))=r V(\partial B^m(r))$.
\end{proof}

\textbf{Acknowledgment:}
The author was partially supported by NSFC (Grant No.~12571054), Natural Science Foundation of Shaanxi Province (Grant No.~2024JC-YBMS-011) and Shaanxi Fundamental Science Research Project for Mathematics and Physics (Grant No.~22JSQ005).





\begin{bibdiv}
\begin{biblist}

\bib{Ahl50}{article}{
      author={Ahlfors, Lars~V.},
       title={Open {R}iemann surfaces and extremal problems on compact subregions},
        date={1950},
        ISSN={0010-2571},
     journal={Comment. Math. Helv.},
      volume={24},
       pages={100\ndash 134},
         url={https://doi.org/10.1007/BF02567028},
      review={\MR{36318}},
}

\bib{Ban80}{book}{
      author={Bandle, Catherine},
       title={Isoperimetric inequalities and applications},
      series={Monographs and Studies in Mathematics},
   publisher={Pitman (Advanced Publishing Program), Boston, Mass.-London},
        date={1980},
      volume={7},
        ISBN={0-273-08423-2},
      review={\MR{572958}},
}

\bib{Che73a}{book}{
      author={Chen, Bang-Yen},
       title={Geometry of submanifolds},
   publisher={Marcel Dekker, Inc., New York},
        date={1973},
        note={Pure and Applied Mathematics, No. 22},
      review={\MR{0353212}},
}

\bib{CW19a}{article}{
      author={Chen, Hang},
      author={Wang, Xianfeng},
       title={Sharp {R}eilly-type inequalities for a class of elliptic operators on submanifolds},
        date={2019},
        ISSN={0926-2245},
     journal={Differential Geom. Appl.},
      volume={63},
       pages={1\ndash 29},
         url={https://doi.org/10.1016/j.difgeo.2018.12.008},
      review={\MR{3896192}},
}

\bib{Chern1968}{book}{
      author={Chern, Shiing-Shen},
       title={Minimal submanifolds in a {R}iemannian manifold},
      series={University of Kansas, Department of Mathematics Technical Report 19 (New Series)},
   publisher={Univ. of Kansas, Lawrence, Kan.},
        date={1968},
      review={\MR{0248648}},
}

\bib{CGGS22}{article}{
      author={Colbois, Bruno},
      author={Girouard, Alexandre},
      author={Gordon, Carolyn},
      author={Sher, David},
       title={Some recent developments on the {Steklov} eigenvalue problem},
        date={2024},
     journal={Rev. Mat. Complut.},
      volume={37},
      number={1},
       pages={1\ndash 161},
         url={https://doi.org/10.1007/s13163-023-00480-3},
      review={\MR{4695859}},
}

\bib{ESI84}{incollection}{
      author={El~Soufi, Ahmad},
      author={Ilias, Sa{\"{\i}}d},
       title={Le volume conforme et ses applications d'apr\`es {L}i et {Y}au},
        date={1984},
   booktitle={S\'{e}minaire de {T}h\'{e}orie {S}pectrale et {G}\'{e}om\'{e}trie, {A}nn\'{e}e 1983--1984},
   publisher={Univ. Grenoble I, Saint-Martin-d'H\`eres},
       pages={VII.1\ndash VII.15},
      review={\MR{1046044}},
}

\bib{ESI86}{article}{
      author={El~Soufi, Ahmad},
      author={Ilias, Sa{\"{\i}}d},
    	title={Immersions minimales, premi\`ere valeur propre du laplacien et volume conforme},
        date={1986},
        ISSN={0025-5831},
     journal={Math. Ann.},
      volume={275},
      number={2},
       pages={257\ndash 267},
         url={https://doi.org/10.1007/BF01458460},
      review={\MR{854009}},
}

\bib{ESI92}{article}{
      author={El~Soufi, Ahmad},
      author={Ilias, Sa{\"{\i}}d},
       title={Une in\'egalit\'e du type ``{R}eilly'' pour les sous-vari\'et\'es de l'espace hyperbolique},
        date={1992},
        ISSN={0010-2571},
     journal={Comment. Math. Helv.},
      volume={67},
      number={2},
       pages={167\ndash 181},
         url={https://doi.org/10.1007/BF02566494},
      review={\MR{1161279}},
}

\bib{ESI00}{article}{
      author={El~Soufi, Ahmad},
      author={Ilias, Sa{\"{\i}}d},
       title={Second eigenvalue of {S}chr\"odinger operators and mean curvature},
        date={2000},
        ISSN={0010-3616},
     journal={Comm. Math. Phys.},
      volume={208},
      number={3},
       pages={761\ndash 770},
         url={http://dx.doi.org/10.1007/s002200050009},
      review={\MR{1736334}},
}

\bib{FS11}{article}{
      author={Fraser, Ailana},
      author={Schoen, Richard},
       title={The first {S}teklov eigenvalue, conformal geometry, and minimal surfaces},
        date={2011},
        ISSN={0001-8708},
     journal={Adv. Math.},
      volume={226},
      number={5},
       pages={4011\ndash 4030},
         url={https://doi.org/10.1016/j.aim.2010.11.007},
      review={\MR{2770439}},
}

\bib{Gab06}{article}{
      author={Gabard, Alexandre},
       title={Sur la repr\'{e}sentation conforme des surfaces de {R}iemann \`a bord et une caract\'{e}risation des courbes s\'{e}parantes},
        date={2006},
        ISSN={0010-2571},
     journal={Comment. Math. Helv.},
      volume={81},
      number={4},
       pages={945\ndash 964},
         url={https://doi.org/10.4171/CMH/82},
      review={\MR{2271230}},
}

\bib{Gro02}{article}{
      author={Grosjean, Jean-Fran\c{c}ois},
       title={Upper bounds for the first eigenvalue of the {L}aplacian on compact submanifolds},
        date={2002},
        ISSN={0030-8730},
     journal={Pacific J. Math.},
      volume={206},
      number={1},
       pages={93\ndash 112},
         url={https://doi.org/10.2140/pjm.2002.206.93},
      review={\MR{1924820}},
}

\bib{Gro04}{article}{
      author={Grosjean, Jean-Fran\c{c}ois},
       title={Extrinsic upper bounds for the first eigenvalue of elliptic operators},
        date={2004},
        ISSN={0385-4035},
     journal={Hokkaido Math. J.},
      volume={33},
      number={2},
       pages={319\ndash 339},
         url={https://doi.org/10.14492/hokmj/1285766168},
      review={\MR{2073001}},
}

\bib{Her70}{article}{
      author={Hersch, Joseph},
       title={Quatre propri\'{e}t\'{e}s isop\'{e}rim\'{e}triques de membranes sph\'{e}riques homog\`enes},
        date={1970},
        ISSN={0151-0509},
     journal={C. R. Acad. Sci. Paris S\'{e}r. A-B},
      volume={270},
       pages={A1645\ndash A1648},
      review={\MR{292357}},
}

\bib{IM11}{article}{
      author={Ilias, Sa\"{\i}d},
      author={Makhoul, Ola},
       title={A {R}eilly inequality for the first {S}teklov eigenvalue},
        date={2011},
        ISSN={0926-2245},
     journal={Differential Geom. Appl.},
      volume={29},
      number={5},
       pages={699\ndash 708},
         url={https://doi.org/10.1016/j.difgeo.2011.07.005},
      review={\MR{2831826}},
}

\bib{KNPP21}{article}{
      author={Karpukhin, Mikhail},
      author={Nadirashvili, Nikolai},
      author={Penskoi, Alexei~V.},
      author={Polterovich, Iosif},
       title={An isoperimetric inequality for {L}aplace eigenvalues on the sphere},
        date={2021},
        ISSN={0022-040X},
     journal={J. Differential Geom.},
      volume={118},
      number={2},
       pages={313\ndash 333},
         url={https://doi.org/10.4310/jdg/1622743142},
      review={\MR{4278696}},
}

\bib{Kok14}{article}{
      author={Kokarev, Gerasim},
       title={Variational aspects of {L}aplace eigenvalues on {R}iemannian surfaces},
        date={2014},
        ISSN={0001-8708},
     journal={Adv. Math.},
      volume={258},
       pages={191\ndash 239},
         url={https://doi.org/10.1016/j.aim.2014.03.006},
      review={\MR{3190427}},
}

\bib{Kok20}{article}{
      author={Kokarev, Gerasim},
       title={Conformal volume and eigenvalue problems},
        date={2020},
        ISSN={0022-2518},
     journal={Indiana Univ. Math. J.},
      volume={69},
      number={6},
       pages={1975\ndash 2003},
         url={https://doi.org/10.1512/iumj.2020.69.8021},
      review={\MR{4170085}},
}

\bib{Kor93}{article}{
      author={Korevaar, Nicholas},
       title={Upper bounds for eigenvalues of conformal metrics},
        date={1993},
        ISSN={0022-040X},
     journal={J. Differential Geom.},
      volume={37},
      number={1},
       pages={73\ndash 93},
         url={http://projecteuclid.org/euclid.jdg/1214453423},
      review={\MR{1198600}},
}

\bib{KKK14}{article}{
      author={Kuznetsov, Nikolay},
      author={Kulczycki, Tadeusz},
      author={Kwa\'{s}nicki, Mateusz},
      author={Nazarov, Alexander},
      author={Poborchi, Sergey},
      author={Polterovich, Iosif},
      author={Siudeja, B.},
       title={The legacy of {V}ladimir {A}ndreevich {S}teklov},
        date={2014},
        ISSN={0002-9920},
     journal={Notices Amer. Math. Soc.},
      volume={61},
      number={1},
       pages={9\ndash 22},
         url={https://doi.org/10.1090/noti1073},
      review={\MR{3137253}},
}

\bib{LY82}{article}{
      author={Li, Peter},
      author={Yau, Shing~Tung},
       title={A new conformal invariant and its applications to the {W}illmore conjecture and the first eigenvalue of compact surfaces},
        date={1982},
        ISSN={0020-9910},
     journal={Invent. Math.},
      volume={69},
      number={2},
       pages={269\ndash 291},
         url={http://dx.doi.org/10.1007/BF01399507},
      review={\MR{674407}},
}

\bib{Mat13}{article}{
      author={Matei, Ana-Maria},
       title={Conformal bounds for the first eigenvalue of the {$p$}-{L}aplacian},
        date={2013},
        ISSN={0362-546X},
     journal={Nonlinear Anal.},
      volume={80},
       pages={88\ndash 95},
         url={https://doi.org/10.1016/j.na.2012.11.026},
      review={\MR{3010757}},
}

\bib{MR86}{article}{
      author={Montiel, Sebasti{\'a}n},
      author={Ros, Antonio},
       title={Minimal immersions of surfaces by the first eigenfunctions and conformal area},
        date={1986},
        ISSN={0020-9910},
     journal={Invent. Math.},
      volume={83},
      number={1},
       pages={153\ndash 166},
         url={http://dx.doi.org/10.1007/BF01388756},
      review={\MR{813585}},
}

\bib{Nad02}{article}{
      author={Nadirashvili, Nikolai},
       title={Isoperimetric inequality for the second eigenvalue of a sphere},
        date={2002},
        ISSN={0022-040X},
     journal={J. Differential Geom.},
      volume={61},
      number={2},
       pages={335\ndash 340},
         url={http://projecteuclid.org/euclid.jdg/1090351388},
      review={\MR{1972149}},
}

\bib{NS17}{article}{
      author={Nadirashvili, Nikolai},
      author={Sire, Yannick},
       title={Isoperimetric inequality for the third eigenvalue of the {L}aplace-{B}eltrami operator on {$\Bbb{S}^2$}},
        date={2017},
        ISSN={0022-040X},
     journal={J. Differential Geom.},
      volume={107},
      number={3},
       pages={561\ndash 571},
         url={https://doi.org/10.4310/jdg/1508551225},
      review={\MR{3715349}},
}

\bib{Pet14}{article}{
      author={Petrides, Romain},
       title={Maximization of the second conformal eigenvalue of spheres},
        date={2014},
        ISSN={0002-9939},
     journal={Proc. Amer. Math. Soc.},
      volume={142},
      number={7},
       pages={2385\ndash 2394},
         url={https://doi.org/10.1090/S0002-9939-2014-12095-8},
      review={\MR{3195761}},
}

\bib{Rei77}{article}{
      author={Reilly, Robert~C.},
       title={On the first eigenvalue of the {L}aplacian for compact submanifolds of {E}uclidean space},
        date={1977},
        ISSN={0010-2571},
     journal={Comment. Math. Helv.},
      volume={52},
      number={4},
       pages={525\ndash 533},
         url={https://doi.org/10.1007/BF02567385},
      review={\MR{0482597}},
}

\bib{Ste02}{article}{
      author={Stekloff, W.},
       title={Sur les probl\`emes fondamentaux de la physique math\'{e}matique},
        date={1902},
        ISSN={0012-9593},
     journal={Ann. Sci. \'{E}cole Norm. Sup. (3)},
      volume={19},
       pages={191\ndash 259},
         url={http://www.numdam.org/item?id=ASENS_1902_3_19__191_0},
      review={\MR{1509012}},
}

\bib{Ste02a}{article}{
      author={Stekloff, W.},
       title={Sur les probl\`emes fondamentaux de la physique math\'{e}matique (suite et fin)},
        date={1902},
        ISSN={0012-9593},
     journal={Ann. Sci. \'{E}cole Norm. Sup. (3)},
      volume={19},
       pages={455\ndash 490},
         url={http://www.numdam.org/item?id=ASENS_1902_3_19__455_0},
      review={\MR{1509018}},
}

\bib{Tak66}{article}{
      author={Takahashi, Tsunero},
       title={Minimal immersions of {R}iemannian manifolds},
        date={1966},
        ISSN={0025-5645},
     journal={J. Math. Soc. Japan},
      volume={18},
       pages={380\ndash 385},
         url={http://dx.doi.org/10.2969/jmsj/01840380},
      review={\MR{0198393}},
}

\bib{Wei54}{article}{
      author={Weinstock, Robert},
       title={Inequalities for a classical eigenvalue problem},
        date={1954},
        ISSN={1943-5282},
     journal={J. Rational Mech. Anal.},
      volume={3},
       pages={745\ndash 753},
         url={https://doi.org/10.1512/iumj.1954.3.53036},
      review={\MR{64989}},
}

\bib{Xio21}{article}{
      author={Xiong, Changwei},
       title={Optimal estimates for {S}teklov eigenvalue gaps and ratios on warped product manifolds},
        date={2021},
        ISSN={1073-7928},
     journal={Int. Math. Res. Not. IMRN},
      number={22},
       pages={16938\ndash 16962},
         url={https://doi.org/10.1093/imrn/rnz258},
      review={\MR{4345819}},
}

\bib{Xio22}{article}{
      author={Xiong, Changwei},
       title={On the spectra of three {S}teklov eigenvalue problems on warped product manifolds},
        date={2022},
        ISSN={1050-6926},
     journal={J. Geom. Anal.},
      volume={32},
      number={5},
       pages={Paper No. 153, 35},
         url={https://doi.org/10.1007/s12220-022-00889-0},
      review={\MR{4386421}},
}

\bib{YY80}{article}{
      author={Yang, Paul~C.},
      author={Yau, Shing~Tung},
       title={Eigenvalues of the {L}aplacian of compact {R}iemann surfaces and minimal submanifolds},
        date={1980},
        ISSN={0391-173X},
     journal={Ann. Scuola Norm. Sup. Pisa Cl. Sci. (4)},
      volume={7},
      number={1},
       pages={55\ndash 63},
         url={http://www.numdam.org/item?id=ASNSP_1980_4_7_1_55_0},
      review={\MR{577325}},
}

\end{biblist}
\end{bibdiv}

\end{document}